\newtheorem{counter}{Counter}
\newtheorem{theorem}[counter]{Theorem}
\newtheorem{lemma}[counter]{Lemma}
\newtheorem{proposition}[counter]{Proposition}
\theoremstyle{definition}
\newtheorem{definition}[counter]{Definition}
\newtheorem{remark}[counter]{Remark}
\newtheorem{example}[counter]{Example}
\numberwithin{counter}{section}
\numberwithin{equation}{section}
\newcommand{\unit}{{\mathrm{1}\mkern-4mu{\mathchoice{}{}{\mskip-0.5mu}{\mskip-1mu}}\mathrm{l}}}
\newcommand{\chr}{{\mathsf{c}}}
\newcommand{\gm}{{\mathsf{g}}}
\newcommand\mt{\operatorname{t}}
\newcommand\oT{\overline{T}}
\newcommand\D{\mathbb{D}}
\newcommand\Aa{{\mathscr{A}}}
\renewcommand\AA{\mathcal{A}}
\renewcommand\SS{\mathcal{S}}
\newcommand\CC{\mathcal{C}}
\newcommand\DD{\mathcal{D}}
\newcommand\EE{\mathcal{E}}
\newcommand\II{\mathcal{I}}
\newcommand\RR{\mathcal{R}}
\newcommand\ZZ{\mathcal{Z}}
\newcommand\inj{\hookrightarrow}
\newcommand{\Proj}{\operatorname{\textsc{\textbf{Proj}}}}
\newcommand{\Cob}{\operatorname{\textsc{\textbf{Cob}}}}
\DeclareMathOperator{\WRT}{WRT}
\DeclareMathOperator{\DGGPR}{DGGPR}
\DeclareMathOperator{\cob}{Cob}
\DeclareMathOperator{\Vect}{Vect}
\DeclareMathOperator{\Sk}{Sk}
\DeclareMathOperator{\SkFun}{\underline{Sk}}
\DeclareMathOperator{\SkCat}{SkCat}
\DeclareMathOperator{\Bimod}{Bimod}
\DeclareMathOperator{\Cat}{Cat}
\DeclareMathOperator{\End}{End}
\DeclareMathOperator{\Hom}{Hom}
\DeclareMathOperator{\id}{id}
\DeclareMathOperator{\Fun}{Fun}
\DeclareMathOperator{\Triv}{Triv}
\title{Non-semisimple WRT at the boundary of Crane--Yetter}
\author{Benjamin Ha\"ioun }
\date{\today}
\begin{document}

\maketitle

\begin{abstract}
    We prove the slogan, promoted by Walker and Freed--Teleman twenty years ago, that \begin{center}
        ``The Witten–Reshetikhin–Turaev 3-TQFT is a boundary condition\\ for the Crane–Yetter 4-TQFT"
    \end{center}  and generalize it to the non-semisimple case following ideas of Jordan, Reutter and Walker.
    
    To achieve this, we prove that the Crane–Yetter 4-TQFT and its non-semisimple version \cite{CGHP} are once-extended TQFTs, using the main result of \cite{HaiounHandle}.
We define a boundary condition, partially defined in the non-semisimple case, for this 4D theory. When the ribbon category used is modular, possibly non-semisimple, we check that the composition of this boundary condition with the values of the 4-TQFT on bounding manifolds reconstructs the Witten–Reshetikhin–Turaev 3-TQFTs and their non-semisimple versions \cite{DGGPR}, in a sense that we make precise.
\end{abstract}
\tableofcontents

\section{Introduction} 
The 3-dimensional Topological Quantum Field Theories predicted by Witten \cite{WittenJonesPol} and constructed mathematically by Reshetikhin and Turaev \cite{ReshetikhinTuraev, TuraevBook} are not TQFTs in the usual sense: they have an anomaly. This can be observed for example in the fact that they induce projective representations of the Mapping Class Groups of surfaces, instead of linear ones. 

This anomaly has been extensively studied by Walker \cite{WalkerOnWitten, WalkerNotes} and he observed that it can be described by a TQFT of dimension 4, the Crane--Yetter theory \cite{CraneYetter}. Along with Freed and Teleman \cite{FreedSlides}, see also \cite{FreedHopkinsLurieTeleman}, they reformulate this observation in the following slogan:
\begin{quote} \it \centering
   The Witten–Reshetikhin–Turaev 3-TQFT is a boundary condition for the Crane–Yetter 4-TQFT
\end{quote}
This idea was communicated in talks and the unfinished notes \cite{WalkerNotes} and was not formalized at the time. Aspects of this story have since been formalized \cite{BarrettFariaMartinGarciaIslas_ObsTVandCY, ThamThesis} but we are still missing a complete picture obtaining the whole TQFT defined by Witten--Reshetikhin--Turaev (and not simply invariants of 3-manifolds, or state spaces) by a single object: a boundary condition to Crane--Yetter. 

In order to formalize the above slogan, we propose to address the following three highlighted points. First:
\begin{quote} 
  \it\centering What does it mean to be a boundary condition to a given TQFT?
\end{quote}
This has been answered by \cite{FreedTeleman, JFS}, see Definition \ref{def:BondaryCond}. From these works we learn that in order for this notion to be interesting, we need the given TQFT to be (at least) once-extended. Hence, our first step is to prove that Crane--Yetter is a once-extended TQFT. This fact is a well-known folklore result \cite{WalkerNotes, ThamThesis, KirillovTham4DTQFTskein} but has never been formalized in the framework used to define boundary conditions, i.e. as a symmetric monoidal 2-functor out of a bicategory of cobordisms. This is done in Section \ref{Sec_ExtendedCY}.
A boundary condition is then an oplax natural transformation from the trivial TQFT to the given one. We are then prepared to:
\begin{quote} 
  \it\centering  Construct a boundary condition to the Crane--Yetter 4-TQFT.
\end{quote}
Several key ingredients are given in \cite{WalkerNotes, ThamThesis} but such a construction has not been made formal in the above framework. This is done in Section \ref{Sec_BoundaryCond}. Finally we must answer:
\begin{quote} 
  \it\centering  In what sense does this boundary condition recover the WRT 3-TQFT?
\end{quote}
The WRT theory as defined in \cite{ReshetikhinTuraev, TuraevBook} is not the same kind of object as a boundary condition. As argued by Freed \cite{FreedAnomaly}, the boundary condition is the ``right object" describing the TQFT with anomaly, at least from a physical perspective, but we must explain in what sense they model the same thing. This is achieved in two different ways in Sections \ref{Sec_WRT} and \ref{Sec_projectiveTQFT}.

In this paper, we treat directly a non-semisimple generalization of this story and prove that the \cite{DGGPR} 3-TQFT is a boundary condition to the \cite{CGHP} 4-TQFT. This generalization, based on the non-semisimple skein theory of \cite{GKPgeneralizedTraceModdim, CGP, DGGPR}, has been developed in not yet published work by Reutter--Walker (see the slides \cite{WalkerSlides} and the forthcoming paper \cite{ReutterWalkerPaper}) and Jordan--Reutter--Walker (see the slides \cite{ReutterSlides}). %Again, it hasn't appeared in written form and the present paper can be seen as a formalization of their ideas. %, filling of course some gaps here and there.

\subsection{Context}
\paragraph{TQFTs}
Topological Quantum Field Theories are a particular kind of physical theories whose physical states are described by some quantum fields living in the ambient space, and whose time evolution does not depend on the metric, but only on the topology of the space-time. They were modeled mathematically by Atiyah \cite{Atiyah} and Segal \cite{SegalCFT}. In modern language, an $(n+1)$-TQFT is a symmetric monoidal functor $$\ZZ:\cob_{n+1} \to \Vect\ .$$
It assigns a vector space $\ZZ(M)$, called state space, to an oriented closed $n$-manifold $M$ and a linear map $\ZZ(W)$, called correlation function, to an $(n+1)$-cobordism $W$ which one may think of as time evolution along $W$.

\paragraph{Anomalies}
The definition of Atiyah and Segal does not model one aspect of the physical theory, namely that a vector $\psi \in \ZZ(M)$ represents the same physical state as $\lambda\psi$ for any scalar $\lambda\in \mathbb \Bbbk^\times$. In general, one should expect that the time evolution operators $\ZZ(W)$ are only defined up to global multiplication by a scalar. One can always fix a choice of normalization for $\ZZ(W)$, but these choices may not respect composition. Hence, from a physical standpoint, instead of a TQFT in the sense above, one should expect some \emph{projective TQFT} or \emph{anomalous TQFT} which is an assignment $\ZZ:\cob_{n+1} \to \Vect$ that only respects composition up to a scalar. We will call these scalars the \emph{anomaly} of $\ZZ$. In general, one expects that the anomaly of $\ZZ$ can be described by an invertible TQFT in dimension $n+2$.
See \cite{FreedAnomalyInvertibleFT, FreedAnomaly, JacksonVanDykeProjectiveTQFT} for more details and physical motivations on these notions. 

\paragraph{Projective theories and anomalous theories} There are two ways to encode the notion of a TQFT with anomaly presented above. One is a functor $\ZZ:\cob_{n+1} \to \mathbb P \Vect$ to some category of projective vector spaces, see \cite{FreedAnomaly, JacksonVanDykeProjectiveTQFT} and Section \ref{Sec_projectiveTQFT}, which is called a \emph{projective theory}. Such a theory contains the information of the \emph{anomaly} $\alpha$ of $\ZZ$ which is an invertible once-categorified $(n+1)$-TQFT (in particular, it assigns one-dimensional vector spaces to closed $(n+1)$-manifolds). The projective theory $\ZZ$ on an $(n+1)$-manifold $M$ gives a vector $\ZZ(M)$ in the one-dimensional vector space $\alpha(M)$. This is not quite an invariant of $(n+1)$-manifolds as we expect from a usual TQFT, as we need to know how this vector space $\alpha(M)$ identifies with $\Bbbk$. The extra data of such an identification is called a trivialization of the anomaly on $M$. 

An other approach to model anomalous theories is that of a functor $\widetilde\ZZ:\widetilde\cob_{n+1} \to \Vect$ where $\widetilde\cob_{n+1}$ is a category of cobordisms with extra data, which we think of as the data needed to trivialize the anomaly and hence promote the projective theory $\ZZ$ to an honest linear theory. We will sometimes call $\widetilde\ZZ$ the resolution of $\ZZ$. See \cite{FreedAnomaly, JacksonVanDykeProjectiveTQFT} and Section \ref{Sec_projectiveTQFT} for details. 

\paragraph{WRT theories}
Witten described in \cite{WittenJonesPol} an example of a Topological Quantum Field Theory in dimension 3, obtained from the Chern--Simons action on $G$-bundles with connections. As is often the case with quantum field theories, the time evolution is described physically via an ill-defined path integral on a infinite-dimensional space of fields. Despite this, Witten managed to obtain a much more concrete description which was made mathematically precise by Reshetikhin and Turaev and the development of \emph{skein theory} \cite{ReshetikhinTuraev, TuraevBook, BHMV95}.

Skein theory is a theory of diagrammatics of tangles in 3D. In the modern formulation, it takes as input a ribbon category $\AA$, which is a well-behaved rigid balanced braided monoidal category, and outputs a way of evaluating tangles or graphs whose strands and vertices are colored by objects and morphisms of $\AA$ \cite[Thm. 2.5]{TuraevBook}. When the category $\AA$ is moreover semisimple modular, Reshetikhin--Turaev's construction produces a (2+1)-TQFT which we will denote $\WRT_\AA$. In the case where $\AA$ is obtained from representations of a quantum group $\mathcal{U}_q(G)$ at a root of unity, it should model Witten's theories. As one should expect for a theory coming from physics, this TQFT has an anomaly.

\paragraph{WRT at the boundary of Crane--Yetter}
Walker explains in his unfinished notes \cite{WalkerNotes} how to describe the anomaly of WRT using skein theory. It turns out, skein theory most naturally fits into a 4-dimensional TQFT $\SS_\AA$. The state space of a 3-manifold $M$ is given by its \emph{skein module} $\Sk_\AA(M)$, spanned by $\AA$-colored graphs in $M$ modulo isotopy and some local relations. The linear map associated to a 4-cobordism $W$ is defined, under some conditions on $\AA$, using a handle decomposition of $W$. These restrictions are met when $\AA$ is semisimple modular and the 4-TQFT obtained coincides \cite{WalkerNotes, ThamThesis} with Crane--Yetter theory \cite{CraneYetter}. 

Walker explains moreover how to reobtain WRT from this 4-TQFT (and, we will see, some extra structure). The state space of WRT on a surface $\Sigma$ is given by the skein module of a bounding handlebody $H$. The invariant of 3-manifold $\WRT_\AA(M)$ is given, up to a renormalization scalar, by taking $\SS_\AA$ on a bounding 4-manifold $M\overset{W}{\to}\emptyset$, which gives a linear map $\SS_\AA(W):\Sk_\AA(M) \to \mathbb C$, and evaluating this map on the empty skein $\emptyset \in \Sk_\AA(M)$. For general 3-cobordisms there is a similar procedure, see Section \ref{Sec_WRT} and Figure \ref{fig:AaofM}. The idea here is that the linear maps $\WRT_\AA(M)$ are only defined up to a scalar, as one expects from an anomalous theory, but choosing a bounding 4-manifold fixes the scalar anomaly. %, i.e. is exactly the data we need to trivialize the anomaly.
In this way, the anomaly of WRT is entirely described by the values of Crane--Yetter on 4-manifolds.

Here we have implicitly used two facts about the skein-theoretic 4-TQFT $\SS_\AA$. 
First, when we used skein modules of 3-manifolds with boundary, we relied on the fact that $\SS_\AA$ is a \emph{once-extended TQFT} and also gives values to surfaces and 3-manifolds with boundary. Second, when we used the empty skein in the skein modules, we relied on the presence of a \emph{boundary condition} to $\SS_\AA$.

\paragraph{Once-extended TQFTs} Locality in physics, which asks that physics at one part of space does not depend on "distant" parts of space, can be modeled by asking that our TQFTs can be cut in the space directions too. Let us write $n+2$ for the space-time dimension, and consider an $(n+1)$-manifold $M$ with a decomposition $M=M_1\underset{\Sigma}{\cup}M_2$ into two $(n+1)$-manifolds with boundary glued along an $n$-manifold $\Sigma$. In a ``local" TQFT $\ZZ$, one should be able to compute the state space $\ZZ(M)$ from this decomposition, i.e. as some kind of composition of ``states on $M_1$" $\ZZ(M_1)$ and ``states on $M_2$" $\ZZ(M_2)$, composed, or glued, over some ``states on $\Sigma$" $\ZZ(\Sigma)$. The precise nature of these objects is still to be determined at this point (below, $\ZZ(\Sigma)$ will be a linear category and $\ZZ(M_i)$ a pro-functor). Similarly, if a space-time $W: M\to M'$ can be decomposed as $W = W_1 \underset{\Sigma\times I}{\cup} W_2$ into two cobordisms with corners glued along their side boundary $\Sigma\times I$, then the time evolution $\ZZ(W)$ should be obtained as a composition of time evolution $\ZZ(W_1)$ on $W_1$ and time evolution $\ZZ(W_2)$ on $W_2$.

This idea is formalized mathematically by the notion of a once-extended $(n+2)$-TQFT, also called an $(n+1+1)$-TQFT or an $(n,n+1,n+2)$-TQFT, namely a symmetric monoidal 2-functor 
$$\ZZ: \Cob_{n+1+1}^\sqcup \to 2\hskip-2pt\Vect^\otimes$$
where $\Cob_{n+1+1}$ is a bicategory of $n$-manifolds, $(n+1)$-cobordisms and $(n+2)$-cobordisms with corners, and $2\hskip-2pt\Vect$ is a symmetric monoidal 2-category with $\Omega2\hskip-2pt\Vect := \End_{2\hskip-2pt\Vect}(\unit) = \Vect$. 

See \cite{FreedHighAlgQuantiz92, LawrenceETFT, WalkerOnWitten} for some early formulations and \cite{SPPhD,MullerPhD, HaiounHandle} for definitions of $\Cob_{n+1+1}$, which we recall in Definition \ref{def:cobBicat}. There are various models for $2\hskip-2pt\Vect$ and we will use $2\hskip-2pt\Vect := \Bimod$, which we recall in Definition \ref{def:Bimod}.

\paragraph{Fully-extended TQFTs and the cobordism hypothesis}
As the name suggests, once-extended TQFTs are only the first step towards a notion of fully extended TQFTs where one can cut spaces in more than one direction (and in particular chunk them into disks). The notion of a fully extended $n$-TQFT can be formalized as a symmetric monoidal $(\infty, n)$-functor 
$$\ZZ: \operatorname{\textbf{Bord}}_n \to n\hskip-2pt\Vect$$
from some $(\infty,n)$-category $\operatorname{\textbf{Bord}}_n$ of 0-manifolds, 1-cobordisms, 2-cobordisms with corners etc... up to n-cobordisms, defined in \cite{LurieCob, CalaqueSchembauerCob}, to some symmetric monoidal $(\infty,n)$-category $n\hskip-2pt\Vect$ to be specified (see e.g. \cite{JFS}).

In this setting, and precisely because one can cut spaces and space-times into disks, there is a good classification of TQFTs by the Baez--Dolan--Hopkins--Lurie Cobordism Hypothesis \cite{BaezDolan, LurieCob}. It states that a framed $n$-TQFT is determined by its value on the point $\ZZ(\bullet)$ which has to be a fully dualizable object (has a dual, and the evaluation and coevaluation morphisms have adjoints, and so on) in $n\hskip-2pt\Vect$. An oriented $n$-TQFT is classified by such a fully dualizable object $\ZZ(\bullet)\in n\hskip-2pt\Vect$ together with an $SO(n)$-homotopy-fixed-point structure. 

\paragraph{Boundary conditions} 
Motivated again by constructions from physics, a boundary condition to a TQFT $\ZZ$ is some extra structure that allows one to ``close" a part of the boundary. The data of $\ZZ$ with a boundary condition is sometimes called an open-closed TQFT. In some contexts they are called relative TQFTs, twisted TQFTs, or domain walls. 

There are geometrically \cite{LurieCob} and algebraically \cite{FreedTeleman, JFS} flavored definitions for boundary conditions. We will focus on the latter. See \cite{StewartPhD} for a comparison. In general, a boundary condition to an $n$-TQFT $\ZZ$ is some kind of natural transformation 
$$\RR: \Triv \Rightarrow \ZZ\vert_{n-1}$$
where $\Triv$ is the trivial theory with constant value the monoidal unit, and $\ZZ\vert_{n-1}$ is the restriction of $\ZZ$ to spaces of dimension at most $n-1$ (i.e. we have forgotten that $\ZZ$ is also defined on space-times). The precise definition of a boundary condition depends on what kind of TQFT $\ZZ$ is. 

If $\ZZ$ is a non-extended TQFT, then the notion of a boundary condition is not very rich, and is simply a $\operatorname{Diff}(M)$-invariant vector $\RR(M) \in \ZZ(M)$ for every closed space $M$.

If $\ZZ$ is a once-extended $(n+2)$-TQFT, then a boundary condition to $\ZZ$ is a symmetric monoidal oplax-natural transformation
$$\RR: \Triv \Rightarrow \ZZ^\varepsilon$$
where $\ZZ^\varepsilon$ is the restriction of $\ZZ$ to the bicategory $\Cob_{n+1+\varepsilon}$ of $n$-manifolds, $(n+1)$-cobordisms and diffeomorphisms. A more explicit description is given in Definition \ref{def:BondaryCond}.

If $\ZZ$ is a fully-extended $n$-TQFT, then a boundary condition to $\ZZ$ is a symmetric monoidal oplax-natural transformation
$$\RR: \Triv \Rightarrow \ZZ\vert_{\operatorname{\textbf{Bord}}_{n-1}}$$
in the sense of \cite{JFS}, namely $\RR$ is a symmetric monoidal functor $$\RR: \operatorname{\textbf{Bord}}_{n-1}\to n\hskip-2pt\Vect^{\unit \to}$$ into the arrow category of \cite{JFS}. Similarly, boundary conditions to a once-extended TQFT can be repackaged into a functor from $\Cob_{n+1+\varepsilon}$ to some arrow category.
%Walker explained in his unfinished notes \cite{WalkerNotes} how to obtain Witten--Reshetikhin--Turaev 3-TQFTs \cite{WittenJonesPol, ReshetikhinTuraev, TuraevBook} at the boundary of Crane--Yetter \cite{CraneYetter} 4-TQFTs. This reflects the fact that WRT theories have an anomaly, described by Crane--Yetter. 

This captures the notion of a projective TQFT of \cite{FreedAnomaly, JacksonVanDykeProjectiveTQFT} when the theory $\ZZ$ is invertible.

\paragraph{Non-semisimple TQFTs}
When the ribbon category $\AA$ is non-semisimple, its skein theory does not behave well. One problem is that the link invariants obtained from projective objects of $\AA$ are zero. One solution that has had great results is to precisely restrict to these ``bad" objects in $\AA$ (more generally to some tensor ideal $\II\subseteq\AA$) and to renormalize the zero that appears everywhere in order to get something non-trivial. This is the approach behind modified traces developed in \cite{GeerPatureauTuraevModifiedqdim, GKPgeneralizedTraceModdim}. Another problem was that the Reshetikhin--Turaev procedure to obtain 3-manifold invariants doesn't work. This was solved by Hennings \cite{Hennings3mfldInvHopf} and Lyubashenko \cite{Lyub3mfldProjMCG}. 

These techniques have been exploited to define ``non-semisimple" invariants of 3-manifolds \cite{CGP} and 3-TQFTs \cite{BCGP, DGGPR}. Importantly, these 3-TQFTs are only partially defined, they need some admissibility condition on the 3-cobordism, and are an instance of the notion of \emph{non-compact TQFTs} \cite{LurieCob}. A crucial object in these construction is the \emph{admissible skein module} of a 3-manifold with coefficient in the tensor ideal $\II\subseteq\AA$, studied in \cite{CGPAdmissbleskein}. The topological content of these non-semisimple TQFTs have proven to be very strong, the 3-manifold invariants of \cite{CGP} distinguish Lens spaces which no semisimple TQFT could do, and the Mapping Class Groups representations of \cite{DGGPR} have no known kernel. 

A non-semisimple 4-TQFT generalizing Crane--Yetter appeared in \cite{CGHP}. Its constructions closely mirrors the skein-theoretic construction of \cite{WalkerNotes} with the non-semisimple adaptations above. We see in Sections \ref{Sec_WRT} and \ref{Sec_projectiveTQFT} that it indeed models the anomaly of \cite{DGGPR}.

\paragraph{Crane--Yetter as a once-extended TQFT}
The Crane--Yetter theories, as any state sum theory (see \cite{BalsamKirillovExtendedTV} for the Turaev--Viro case), have long been known to be extended \cite{WalkerNotes, ThamThesis, KirillovTham4DTQFTskein}, though, to our knowledge, they have never been written down as a symmetric monoidal 2-functor in the literature. We do this in Section \ref{Sec_ExtendedCY}. %, i.e. has never been formalized as an extended theory.

In the description of \cite{WalkerNotes}, the Crane--Yetter theories associate \emph{skein categories} to surfaces and \emph{skein bimodule functors} to 3-cobordisms. The non-semisimple adaptation of these skein categories and skein bimodule functors is studied in \cite{BHskcat}. See Section \ref{Sec_Skein}.

On 4-cobordisms, Walker describes the 4-TQFT on handle attachments, which is generalized in \cite{CGHP} in the non-semisimple case. One then has to check that the assignments defined this way do not depend on the given handle decomposition and assemble into a 2-functor.
The main result of the author's \cite{HaiounHandle} reduces this to checking a finite number of relations, by extending \cite{Juhasz} to once-extended TQFTs. See Section \ref{Sec_ExtendedCY}.

\paragraph{Crane--Yetter as a fully extended TQFT} 
The description of Walker actually extends all the way down to the point, and one expects that Crane--Yetter theories are fully extended. In particular, they should fall under the classification of the cobordism hypothesis and be associated to a 4-dualizable object (equipped with an $SO(4)$-homotopy-fixed-point structure) in some 4-category $4\hskip-2pt\Vect$. This object should be the modular category $\AA$ itself.

An appropriate model for the 4-category $4\hskip-2pt\Vect$ is the even higher Morita category $\operatorname{Alg}_2(\Pr)$ defined in \cite{JFS}. The modular category $\AA$ (formally, its Ind-completion) indeed gives an object of this 4-category. It is shown to be 4-dualizable in \cite{BJS}. In the non-semisimple, it is shown that (Ind-completions of) non-semisimple modular categories are 4-dualizable in \cite{BJSS}. The $SO(4)$-structure is still poorly understood (see \cite{SchomPriesDualLowdimHighCat} for some ideas of what it represents in lower dimensions) but the author has previously conjectured in \cite{HaiounUnit} that it is induced by the ribbon structure of $\AA$ and a modified trace on $\II$.

\paragraph{A fully extended boundary condition to Crane--Yetter}
Similarly, Walker's description of the boundary condition as the empty skein extends all the way down to the point. Again, it should be classified by some version of the cobordism hypothesis for boundary conditions, and correspond to some fully dualizable 1-morphism $\eta: \unit \to \AA$ in $4\hskip-2pt\Vect$. More precisely $\eta$ should be 3-dualizable in the arrow category $4\hskip-2pt\Vect^\to$ \cite{JFS}.

It is expected by Walker and Freed \cite{FreedSlides} that this 1-morphism is given by the regular bimodule $ {}_{\unit}\AA_\AA$. It is shown to be a 3-dualizable (resp. almost 3-dualizable in an appropriate sense in the non-semisimple case) object in $\operatorname{Alg}_2(\Pr)^\to$ in  \cite{HaiounUnit}. This partial dualizability in the non-semisimple case reflects the fact the DGGPR theories are not defined on every 3-cobordism, i.e. are non-compact TQFTs.

From this description, one expects to be able to give a fully extended description of WRT (resp. \cite{DGGPR}) as a fully extended boundary condition to the fully extended Crane--Yetter (resp. \cite{CGHP}) theory, all of which are obtained through the cobordism hypothesis. These expectations have been formalized as a conjecture in \cite{HaiounUnit}. 

In this paper, we prove a version of these conjectures in a once-extended setting, which does not involve the cobordism hypothesis. 
Our approach naturally gives an oriented theory and does not run into the orientability difficulties that the fully extended approach encounters.

\subsection{Results}
We construct Crane--Yetter and its non-semisimple variant as a once-extended 4-TQFT, define a boundary condition to it, and provide a procedure to recover WRT from this 4-TQFT and its boundary condition.

\paragraph{Once-extended non-semisimple Crane--Yetter}
Even though the description of Crane--Yetter on surfaces, 3-cobordisms and 4-cobordisms with corners is well understood via skein theory, see \cite{WalkerNotes, ThamThesis} for the semisimple case and \cite{CGHP, BHskcat} for the non-semisimple case, it is non-trivial to show that this assignment assembles into a symmetric monoidal 2-functor. One has to check all the coherences asked of such a functor while dealing with choices of handle decomposition for 4-cobordisms. 

This difficulty is overcome by the main result of \cite{HaiounHandle} which allows us to build a once-extended $4$-TQFT $\ZZ: \Cob_{2+1+1}\to \CC$ from: 
\begin{itemize}
    \item A categorified $3$-TQFT $\ZZ^\varepsilon:\Cob_{2+1+\varepsilon}\to \CC$ (i.e. the values on surfaces, 3-cobordisms and diffeomorphisms, but nothing in dimension 4), and
    \item The values on standard handle attachment $Z_k:\ZZ^\varepsilon(S^{k-1}\times \D^{4-k}) \to \ZZ^\varepsilon(\D^k \times S^{3-k}),\ k =0,\dots,4,$ satisfying handle cancellations and invariance under reversal of the attaching spheres.
\end{itemize}
It is well-known that skein theory assembles into a categorified 3-TQFT \cite{WalkerNotes}, see \cite{BHskcat, RunkelSchweigertThamExciAdmSkeins} in the non-semisimple case. However, it was never quite written down as a symmetric monoidal 2-functor $\ZZ^\varepsilon:\Cob_{2+1+\varepsilon}\to \CC$, which we do in Section \ref{Sec_Skein}.
\newtheorem*{thm:SkCategTQFT}{Theorem \ref{thm:SkCategTQFT}}
\begin{thm:SkCategTQFT} \it
    Let $\II\subseteq\AA$ be a tensor ideal in a linear ribbon category. The skein categories of surfaces and skein bimodule functors of 3-cobordisms of \cite{WalkerNotes, BHskcat} assemble into a categorified 3-TQFT, or $(2+1+\varepsilon)$-TQFT
    $$\SkFun_\II:\Cob_{2+1+\varepsilon}\to \Bimod^{hop}$$
\end{thm:SkCategTQFT}

The values on standard handle attachment have already been defined, in a non-extended setting, in \cite{CGHP} and we simply translate the construction in our setting in Section \ref{Sec_ExtendedCY}. The conditions on $\AA$ found in \cite{CGHP} for these handle attachments to be defined is to be ``chromatic compact". This includes in particular the case where $\AA$ is (possibly non-semisimple) modular.
\newtheorem*{thm:ExtendedCGHP}{Theorem \ref{thm:ExtendedCGHP}}
\begin{thm:ExtendedCGHP} \it
    Let $\AA$ be a chromatic compact category, $\II=\operatorname{Proj(\AA)}$ and $\mt$ a modified trace on $\II$. The skein categorified 3-TQFT $\SkFun_\II$ extends to a once-extended 4-TQFT, or $(2+1+1)$-TQFT
    $$\SS_\II:\Cob_{2+1+1}\to \Bimod^{hop}$$
\end{thm:ExtendedCGHP}

\paragraph{A boundary condition to Crane--Yetter}
Having defined Crane--Yetter as a once-extended 4-TQFT, we can define a boundary condition to it. See Section \ref{Sec_BoundaryCond}. In Walker's picture, it is given by the empty skein in skein categories and skein modules. This becomes more subtle in the non-semisimple case as the admissibility conditions precisely discard the empty skein as an element of admissible skein modules and as an object of the $\II$-skein category. 

Nevertheless, the ``empty skein" defines a presheaf on the $\II$-skein category. This is what our boundary condition assigns to surfaces. Similarly, for a 3-cobordism $M$ with incoming boundary in every connected component, we can make sense of something like ``adding the empty skein in $M$". This is what our boundary condition does on 3-cobordisms. When $\II\neq \AA$, i.e. when we do not allow the empty skein, it is not defined on every 3-cobordisms, and is only a non-compact boundary condition.
\newtheorem*{thm:BondCond}{Theorem \ref{thm:BondCond}}
\begin{thm:BondCond} \it
    Let $\II\subseteq\AA$ be a tensor ideal in a linear ribbon category. The empty skein defines a boundary condition, non-compact when $\II\neq \AA$
        \begin{equation*}
            \begin{tikzcd}
            \Cob_{2+1+\varepsilon}^{nc, hop} \ar[dd, bend right=50pt,"\mathlarger\Triv"', ""{name=left, right}]\ar[dd, bend left=50pt,"\mathlarger{\SkFun_\II}", ""{name=right, left}] \\ \ \\\Bimod          
            \ar[from=left,to=right,Rightarrow,"\mathlarger{\RR_\II}"]
        \end{tikzcd}
        \end{equation*}
\end{thm:BondCond}

\paragraph{Reconstructing WRT}
Walker and Freed explained that one should be able to reconstruct WRT by composing the boundary condition with Crane--Yetter on a bounding manifold. 
We define the \emph{anomalous theory} $\Aa$ associated to a once-extended theory equipped with a boundary condition via this construction in Definition \ref{def:anomalous}. It outputs some kind of TQFT defined on a category of cobordisms equipped with bounding manifolds $\cob_{2+1}^{filled}$, non-compact if the boundary condition is non-compact. This category of cobordisms maps to the usual source category $\widetilde\cob_{2+1}$ for WRT by only remembering some part of the data of the bounding manifolds. 

Let $\AA$ be a (possibly non-semisimple) modular category, $\II = \operatorname{Proj}(\AA)$ (so $\II = \AA$ in the semisimple case), and $\mt$ a modified trace on $\II$. We need to normalize $\mt$ correctly  and divide it by a chosen square root $\DD$ of the global dimension $\zeta = \SS_\II(S^4)$. This ensures that with this new modified trace we get $\zeta=1$.

We call $\Aa_\II$ the anomalous theory obtained from $\SS_\II$ and $\RR_\II$ above. From the same data, Reshetikhin--Turaev \cite{ReshetikhinTuraev, TuraevBook} in the semisimple case, and \cite{DGGPR} in the non-semisimple case, construct a 3-TQFT with anomaly 
$$\WRT_\AA : \widetilde\cob_{2+1} \to \Vect$$
and
$$\DGGPR_\AA:\widetilde\cob_{2+1}^{nc} \to \Vect$$
which we compare to the anomalous theory $\Aa_\II$ in Section \ref{Sec_WRT}.
\newtheorem*{thm:WRT}{Theorem \ref{thm:WRT}}
\begin{thm:WRT} \it
If $\AA$ is semisimple modular, then
    \begin{equation*}
        \begin{tikzcd}
            \cob_{2+1}^{filled} \ar[rr, "\Aa_\AA"] && \Vect\\
            & \widetilde \cob_{2+1} &
            \arrow[from = 1-1, to = 2-2, "{\pi}"]
            \arrow[from = 2-2, to = 1-3, "\WRT_\AA"', sloped]
        \end{tikzcd}
    \end{equation*}
    commutes up to symmetric monoidal natural isomorphism.   
\end{thm:WRT}
\newtheorem*{thm:DGGPR}{Theorem \ref{thm:DGGPR}}
\begin{thm:DGGPR} \it
If $\AA$ is non-semisimple modular, then
    \begin{equation*}
        \begin{tikzcd}
            \cob_{2+1}^{filled, nc} \ar[rr, "\Aa_\II"] && \Vect\\
            & \widetilde \cob_{2+1}^{nc} &
            \arrow[from = 1-1, to = 2-2, "{\pi}"]
            \arrow[from = 2-2, to = 1-3, "\DGGPR_\AA"', sloped]
        \end{tikzcd}
    \end{equation*}
    commutes up to symmetric monoidal natural isomorphism.   
\end{thm:DGGPR}

\paragraph{WRT as a projective theory} In the setting developed by Freed and Van Dyke, it is really the boundary condition that describes WRT as a projective theory. The construction above, evaluating the $\SS_\II$ on a bounding manifold, is only a way of trivializing the anomaly and obtaining a linear theory. In other words, the anomalous theory $\Aa_\II: \cob_{2+1}^{filled, nc}\to \Vect$ and the historical construction $\WRT_\AA, \DGGPR_\AA: \widetilde\cob_{2+1}^{nc}\to \Vect$ are resolutions of the projective theory $\RR_\II$. 

The boundary condition $\RR_\II:\Triv \Rightarrow \SkFun_\II$ is the same data as a symmetric monoidal functor $\RR_\II: \Cob_{n+1+\varepsilon}^{nc, hop}\to \Bimod^{\unit\to}$ to the oplax arrow category of \cite{JFS}. It lands in the subcategory $\mathbb P\Vect$ defined by \cite{FreedAnomaly, JacksonVanDykeProjectiveTQFT} as $\SkFun_\II$ is invertible. 

We prove the following in Section \ref{Sec_projectiveTQFT}. %, which is essentially a reformulation of the results above.
\newtheorem*{thm:WRTprojective}{Theorem \ref{thm:WRTprojective}}
\begin{thm:WRTprojective} \it
Let $\AA$ be a semisimple modular tensor category with a chosen square root of its global dimension. Then $\RR_\AA: \cob_{2+1} \to \mathbb P\Vect$ is a projective TQFT, and the WRT theory $\WRT_\AA: \widetilde\cob_{2+1}\to \Vect$ is a resolution of $\RR_\AA$ in the sense that  \begin{equation*}
    \begin{tikzcd}
        \widetilde\cob_{2+1} \ar[d, "\pi"] \ar[r, "\WRT_\AA"] & \Vect \ar[d, "\mathbb P"] \\
        \cob_{2+1} \ar[r, "\RR_\AA"] & \mathbb P\Vect
    \end{tikzcd}
    \end{equation*} commutes up to symmetric monoidal pseudo-natural isomorphism.

Let $\AA$ be a non-semisimple modular category with a chosen modified trace with global dimension equal to 1 and $\II=\operatorname{Proj}(\AA)$. Then $\RR_\II: \cob_{2+1}^{nc} \to \mathbb P\Vect$ is a non-compact projective TQFT, and the DGGPR theory $\DGGPR_\AA: \widetilde\cob_{2+1}^{nc}\to \Vect$ is a resolution of $\RR_\II$ in the sense that  \begin{equation*}
    \begin{tikzcd}
        \widetilde\cob_{2+1}^{nc} \ar[d, "\pi"] \ar[r, "\DGGPR_\AA"] & \Vect \ar[d, "\mathbb P"] \\
        \cob_{2+1}^{nc} \ar[r, "\RR_\II"] & \mathbb P\Vect
    \end{tikzcd}
    \end{equation*} commutes up to symmetric monoidal pseudo-natural isomorphism.
\end{thm:WRTprojective}
Note that in the result above we choose a modified trace and a square root of the global dimension, but these are only used in the definition of $\WRT_\AA$ and $\DGGPR_\AA$, not in the definition of $\RR_\II$. In other words, this data is only needed to trivialize the anomaly, not to define the projective TQFT.

\subsection{Description of WRT as an anomalous theory}
From the theorems above, we learn that we may avoid the usual construction of WRT and DGGPR \cite{TuraevBook, DGGPR} using the universal construction of \cite{BHMV95}. Instead, we can describe them in terms of the 4-dimensional skein theory $\SS_\II$ and its boundary condition $\RR_\II$. Let us give an informal account of this description, which is made precise in Section \ref{ssec:DescrAnomalous}.
As we are simply trying to convey a general picture let us focus on the semisimple case. It should be noted that this description is certainly not new and appears in Walker's work. Instead, the reader should think that the following paragraphs describe what it is that we are trying to formalize.

\paragraph{State spaces of surfaces:} Let $\Sigma$ be a closed surface. Choose arbitrarily a handle-body $H$ bounding $\Sigma$. The state space $\Aa(\Sigma)$ of a surface $\Sigma$ is given by the skein module of $H$ with empty boundary
\begin{equation*}
  \Aa(\Sigma) :=  \Sk(H) = \left\langle 
\begin{tikzpicture}[xscale = 0.4,yscale = 0.18, baseline = -5pt]
    \fill[gray!10] (0,0) ..controls (0,3) and (1.5,1) .. (2,1) .. controls (2.5,1) and (4,3)..(4,0) ..controls (4,-3) and (2.5,-1).. (2,-1)..controls (1.5,-1) and (0,-3).. (0,0);
    \fill[gray!10] (4,0)..controls (4,-4) and (2.5,-5).. (2,-5)..controls (1.5,-5) and (0,-4).. (0,0)--cycle;
    \draw[gray] (4,1.5)node[right]{$\Sigma$}--(4,0)..controls (4,-4) and (2.5,-5).. (2,-5)..controls (1.5,-5) and (0,-4).. (0,0)--(0,1.5);
    \begin{scope}[yshift = 1.5cm]
    \fill[gray!10] (0,0) ..controls (0,3) and (1.5,1) .. (2,1) .. controls (2.5,1) and (4,3)..(4,0)..controls (4,-3) and (2.5,-1).. (2,-1)..controls (1.5,-1) and (0,-3).. (0,0);
    \draw[gray] (0,0) ..controls (0,3) and (1.5,1) .. (2,1) .. controls (2.5,1) and (4,3)..(4,0)..controls (4,-3) and (2.5,-1).. (2,-1)..controls (1.5,-1) and (0,-3).. (0,0);
    \end{scope}
    \draw (0.5,-0.5).. controls (0.5, 0.5) and (1,0.9).. (1.5,0.9)  .. controls (2,0.9) and (2.5,0.4).. (2.5,-0.3);
    \draw (0.5,-0.5)..controls (0.5,-2) .. (1,-3)node{$\bullet$}  ..controls (2,-2).. (3,-3)node[midway, sloped]{$\scriptstyle >$} node{$\bullet$}  .. controls (4,-2.5) and (2.5,-1).. (2.5,-0.3)node[midway, sloped]{$\scriptstyle <$};
    \draw (1,-3)  ..controls (2,-4).. (3,-3)node[midway, sloped]{$\scriptstyle >$};
    \node[gray] at (4.5,-2){$H$};
\begin{scope}[yshift = 1.5cm]
    \draw[gray!10, line width = 3pt] (4,0)..controls (4,-3) and (2.5,-1).. (2,-1)..controls (1.5,-1) and (0,-3).. (0,0);
    \draw[gray] (4,0)..controls (4,-3) and (2.5,-1).. (2,-1)..controls (1.5,-1) and (0,-3).. (0,0);
\end{scope}
\end{tikzpicture}
  \right\rangle
\end{equation*}
This can be seen as a composition of first $\RR_\II(\Sigma)$ including the empty object in the skein category of $\Sigma$, then applying the skein bimodule functor $\SkFun_\II(H)$ of $H$ on this object. There are some subtleties in the non-semisimple case as the empty object is not literally an object of the skein category, but this composition still holds. 

\paragraph{Correlation functions of 3-cobordisms:} Let $M:\Sigma \to \Sigma'$ be a 3-cobordism. Remember that we have chosen $H$ and $H'$ bounding $\Sigma$ and $\Sigma'$. We would like to construct a linear map from $\Sk(H)$ to $\Sk(H')$. Let $T$ be a skein in $H$. Then we can think of it as a skein $T \subseteq H\underset{\Sigma}{\cup}M$. Note that in the non-semisimple case this skein will only be admissible if $M$ has incoming boundary in every connected component, and we must restrict to this case. 

So we have a skein $T$ in a 3-manifold $H\underset{\Sigma}{\cup}M$ bounding $\Sigma'$, and we are trying to produce a skein $\Aa(M)(T)$ in another 3-manifold bounding $\Sigma'$, namely $H'$. Any two oriented compact 3-manifolds with same boundary are cobordant, i.e. there exists a 4-manifold $W:H\underset{\Sigma}{\cup}M \Rightarrow H'$. 
We may moreover assume that $W$ is constructed only from 0-, 2- and 4-handles. For every 2-handle of $W$, we obtain a skein in the surgered 3-manifold by adding a red circle along the newly created $\D^2\times S^1$, which is turned into a skein using the Kirby color, see \eqref{eq:redToBlue} and Figure \ref{fig:2handle}. For every 0-handle, we add the empty skein in $S^3$ times an appropriate scalar and for every 4-handle, we evaluate the skein in the $S^3$-component of $H\underset{\Sigma}{\cup}M$ as in Figure \ref{fig:4handle}.
\begin{equation*}
    \Aa(M) : 
    \begin{tikzpicture}[xscale = 0.4,yscale = 0.18, baseline = 10pt]
    \fill[gray!10] (0,0) ..controls (0,3) and (1.5,1) .. (2,1) .. controls (2.5,1) and (4,3)..(4,0) ..controls (4,-3) and (2.5,-1).. (2,-1)..controls (1.5,-1) and (0,-3).. (0,0);
    \fill[gray!10] (4,0)..controls (4,-4) and (2.5,-5).. (2,-5)..controls (1.5,-5) and (0,-4).. (0,0)--cycle;
    \draw[gray] (4,1.5)node[right]{$\Sigma$}--(4,0)..controls (4,-4) and (2.5,-5).. (2,-5)..controls (1.5,-5) and (0,-4).. (0,0)--(0,1.5);
    \begin{scope}[yshift = 1.5cm]
    \fill[gray!10] (0,0) ..controls (0,3) and (1.5,1) .. (2,1) .. controls (2.5,1) and (4,3)..(4,0)..controls (4,-3) and (2.5,-1).. (2,-1)..controls (1.5,-1) and (0,-3).. (0,0);
    \draw[gray] (0,0) ..controls (0,3) and (1.5,1) .. (2,1) .. controls (2.5,1) and (4,3)..(4,0)..controls (4,-3) and (2.5,-1).. (2,-1)..controls (1.5,-1) and (0,-3).. (0,0);
    \end{scope}
    \draw (0.5,-0.5).. controls (0.5, 0.5) and (1,0.9).. (1.5,0.9)  .. controls (2,0.9) and (2.5,0.4).. (2.5,-0.3);
    \draw (0.5,-0.5)..controls (0.5,-2) .. (1,-3)node{$\bullet$}  ..controls (2,-2).. (3,-3)node[midway, sloped]{$\scriptstyle >$} node{$\bullet$}  .. controls (4,-2.5) and (2.5,-1).. (2.5,-0.3)node[midway, sloped]{$\scriptstyle <$};
    \draw (1,-3)  ..controls (2,-4).. (3,-3)node[midway, sloped]{$\scriptstyle >$};
    \node[gray] at (4.5,-2){$H$};
\begin{scope}[yshift = 1.5cm]
    \draw[gray!10, line width = 3pt] (4,0)..controls (4,-3) and (2.5,-1).. (2,-1)..controls (1.5,-1) and (0,-3).. (0,0);
    \draw[gray] (4,0)..controls (4,-3) and (2.5,-1).. (2,-1)..controls (1.5,-1) and (0,-3).. (0,0);
\end{scope}
\end{tikzpicture}
\mapsto
\begin{tikzpicture}[xscale = 0.4,yscale = 0.18, baseline = 10pt]
    \fill[gray!10] (0,0) ..controls (0,3) and (1.5,1) .. (2,1) .. controls (2.5,1) and (4,3)..(4,0)..controls (4,-3) and (2.5,-1).. (2,-1)..controls (1.5,-1) and (0,-3).. (0,0);
    \fill[gray!10] (0,0) -- (0,1.5)..controls (0,2.5) and (-0.2,3).. (-0.2,3.5).. controls (-0.2,4.5) and (1,5).. (1,6) -- (1,7.5) arc (180:0:1) --(3,6) ..controls (3,5) and (4.2,4.5)..(4.2,3.5)..controls (4.2,3) and (4,2.5)..(4,1.5) -- (4,0);
    \draw[gray] (0,0) -- (0,1.5)..controls (0,2.5) and (-0.2,3).. (-0.2,3.5).. controls (-0.2,4.5) and (1,5).. (1,6) -- (1,7.5);
    \draw[gray] (1,7.5) arc(180:540:1);

    \fill[gray!10] (0,0) ..controls (0,3) and (1.5,1) .. (2,1) .. controls (2.5,1) and (4,3)..(4,0)..controls (4,-3) and (2.5,-1).. (2,-1)..controls (1.5,-1) and (0,-3).. (0,0);
    \fill[gray!10] (4,0)..controls (4,-4) and (2.5,-5).. (2,-5)..controls (1.5,-5) and (0,-4).. (0,0)--cycle;
    \draw[gray] (4,1.5)--(4,0)..controls (4,-4) and (2.5,-5).. (2,-5)..controls (1.5,-5) and (0,-4).. (0,0)--(0,1.5);
    \begin{scope}[yshift = 1.5cm]
    \fill[gray!10] (0,0) ..controls (0,3) and (1.5,1) .. (2,1) .. controls (2.5,1) and (4,3)..(4,0)..controls (4,-3) and (2.5,-1).. (2,-1)..controls (1.5,-1) and (0,-3).. (0,0);
    \end{scope}
    \draw (0.5,-0.5).. controls (0.5, 0.5) and (1,0.9).. (1.5,0.9)  .. controls (2,0.9) and (2.5,0.4).. (2.5,-0.3);
    \draw (0.5,-0.5)..controls (0.5,-2) .. (1,-3)node{$\bullet$}  ..controls (2,-2).. (3,-3)node[midway, sloped]{$\scriptstyle >$} node{$\bullet$}  .. controls (4,-2.5) and (2.5,-1).. (2.5,-0.3)node[midway, sloped]{$\scriptstyle <$};
    \draw (1,-3)  ..controls (2,-4).. (3,-3)node[midway, sloped]{$\scriptstyle >$};
    \node[gray] at (4.5,-2){$H$};
    
    \draw[gray] (3,7.5)--(3,6) node[pos =0, right]{$\Sigma'$}..controls (3,5) and (4.2,4.5)..(4.2,3.5) node[pos = 0.7,right]{$M$}..controls (4.2,3) and (4,2.5)..(4,1.5) -- (4,0);
\begin{scope}[yshift = 1.5cm]
    \draw[gray!10, line width = 3pt] (4,0)..controls (4,-3) and (2.5,-1).. (2,-1)..controls (1.5,-1) and (0,-3).. (0,0);
    \draw[gray] (4,0)..controls (4,-3) and (2.5,-1).. (2,-1)..controls (1.5,-1) and (0,-3).. (0,0);
    \draw[gray, dashed] (0,0) ..controls (0,3) and (1.5,1) .. (2,1) .. controls (2.5,1) and (4,3)..(4,0);
\end{scope}
\end{tikzpicture}
\overset W{\mapsto}
\begin{tikzpicture}[xscale = 0.4,yscale = 0.18, baseline = 10pt]
    \fill[gray!10] (0,0) ..controls (0,3) and (1.5,1) .. (2,1) .. controls (2.5,1) and (4,3)..(4,0)..controls (4,-3) and (2.5,-1).. (2,-1)..controls (1.5,-1) and (0,-3).. (0,0);
    \fill[gray!10] (0,0) -- (0,1.5)..controls (0,2.5) and (-0.2,3).. (-0.2,3.5).. controls (-0.2,4.5) and (1,5).. (1,6) -- (1,7.5) arc (180:0:1) --(3,6)..controls (3,5) and (4,3.5)..(4,1.5) -- (4,0);
    \draw[gray] (0,0) -- (0,1.5)..controls (0,2.5) and (-0.2,3).. (-0.2,3.5).. controls (-0.2,4.5) and (1,5).. (1,6) -- (1,7.5);
    \draw[gray] (1,7.5) arc(180:540:1);
    \fill[gray!10] (0,0) ..controls (0,3) and (1.5,1) .. (2,1) .. controls (2.5,1) and (4,3)..(4,0)..controls (4,-3) and (2.5,-1).. (2,-1)..controls (1.5,-1) and (0,-3).. (0,0);
    \fill[gray!10] (4,0)..controls (4,-4) and (2.5,-5).. (2,-5)..controls (1.5,-5) and (0,-4).. (0,0)--cycle;
    \draw[gray] (4,1.5)--(4,0)..controls (4,-4) and (2.5,-5).. (2,-5)..controls (1.5,-5) and (0,-4).. (0,0)--(0,1.5);
    \begin{scope}[yshift = 1.5cm]
    \fill[gray!10] (0,0) ..controls (0,3) and (1.5,1) .. (2,1) .. controls (2.5,1) and (4,3)..(4,0)..controls (4,-3) and (2.5,-1).. (2,-1)..controls (1.5,-1) and (0,-3).. (0,0);
    \end{scope}
    \draw (0.5,-0.5).. controls (0.5, 0.5) and (1,0.9).. (1.5,0.9)  .. controls (2,0.9) and (2.5,0.4).. (2.5,-0.3);
    \draw (0.5,-0.5)..controls (0.5,-2) .. (1,-3)node{$\bullet$}  ..controls (2,-2).. (3,-3)node[midway, sloped]{$\scriptstyle >$} node{$\bullet$}  .. controls (4,-2.5) and (2.5,-1).. (2.5,-0.3)node[midway, sloped]{$\scriptstyle <$};
    \draw (1,-3)  ..controls (2,-4).. (3,-3)node[midway, sloped]{$\scriptstyle >$};
    \node[gray] at (4.7,1){$H'$};
    \draw[gray] (3,7.5)--(3,6) node[pos=0, right]{$\Sigma'$}..controls (3,5) and (4,3.5)..(4,1.5) -- (4,0);
    \draw[gray!10, line width = 5pt] (2,4).. controls (2.5,4) and (3,3).. (3,1)..controls (3,0) and (2.5,-1)..(1.5,-1)..controls (1,-1) and (0.5,0).. (0.5,1) ..controls (0.5,2) and (1,4)..(2,4);
    \draw[red] (2,4).. controls (2.5,4) and (3,3).. (3,1)..controls (3,0) and (2.5,-1)..(1.5,-1)..controls (1,-1) and (0.5,0).. (0.5,1) ..controls (0.5,2) and (1,4)..(2,4);
    \draw[gray!10, line width = 5pt] (0.5,-0.5).. controls (0.5, 0.5) and (1,0.9).. (1.5,0.9);
    \draw (0.5,-0.5).. controls (0.5, 0.5) and (1,0.9).. (1.5,0.9);
\end{tikzpicture}
\end{equation*}
Again, we may think of this as a composition of two maps, the boundary condition $\RR_\II(M)$ adding the empty skein in $M$ to the skein $T$ in $H$, and the skein 4-TQFT $\SS_\II$ on the bounding 4-manifold.

\paragraph{Anomaly}
Of course we made a lot of choices here, but crucially the map $\Aa(M)$ depends on $W$ only up to a global scalar, which is what we expect from a TQFT with anomaly. One can "fix" this anomaly by renormalizing the dependence on $W$. However, this renormalization will not behave well with gluing, and our TQFT will only preserve composition up to scalar. To obtain an actual functor, one has to remember the choices in the source category.

\subsection{Future directions}
\paragraph{Fully extended WRT} As mentioned above, the description of WRT as a boundary condition is expected to extend down to the point. We do know the fully dualizable objects that would induce Crane--Yetter and the boundary condition under the cobordism hypothesis. Their dualizability has been established in \cite{BJS, BJSS, HaiounUnit}. However, we do not know that they can be equipped with orientation structures. Nor do we know that the cobordism hypothesis applied on these objects gives the values we expect, with the notable exception of the anomaly in dimension at most two, that has to agree with factorization homology \cite{AyalaFrancisTanakaFH, Scheimbauer} which itself agrees with skein categories \cite{CookeExcision, BHskcat}.

\paragraph{Anomalous TQFTs} We now know how to define the notion of an anomalous theory and what it might look like. One may hope that a good understanding of the example of WRT will open avenues for generalizations. For example, are there interesting anomalous, possibly non-compact, 4-dimensional TQFTs? %The author does not know any. 
Their anomaly would be described by an invertible 5-dimensional theory. Do we know how to describe those? In dimension up to 4 they are classified in \cite{SchommerPriesInvertible}.

\subsection*{Acknowledgments}
I would like to thank warmly David Jordan for his advising role at various steps of this project. I would also like to thank Francesco Costantino, Pavel Safronov, David Reutter, Kevin Walker, Marco De Renzi, Will Stewart, Patrick Kinnear and Jackson Van Dyke for guidance, encouragements and enlightening conversations. 

%Many important ideas presented here have been developed by Walker twenty years ago, and by Jordan, Reutter, Safronov and Walker five years ago for the non-semisimple case. This work should be thought of as a formalization of these ideas, which has faced many technical challenges. Some aspects of this paper represent ongoing work of Reutter--Walker, announced in talks \cite{ReutterWalkerSlides}. After discussion, we have decided to publish our works independently. 

\section{Background}\label{Sec_Background}
\subsection{Cobordism bicategories and once-extended TQFTs}
We begin by recalling the definition of the cobordism bicategory. We adopt the definition of \cite{HaiounHandle} and refer there for details, but see also \cite{SPPhD}. Unless stated otherwise, every manifold below is compact, smooth and oriented.
\begin{definition}\label{def:cobBicat}
    The \textbf{bicategory of $(2+1+1)$-cobordisms} $\Cob_{2+1+1}$ is the symmetric monoidal bicategory with 
    \begin{description}
        \item[objects:] Closed oriented smooth surfaces $\Sigma$
        \item[1-morphisms:] 3-cobordisms $M:\Sigma_-\to \Sigma_+$ equipped with a collar of their boundary $\Sigma_\pm\times [\pm1,\pm\frac{1}{2})\inj M$. Composition is given by gluing the collars, which inherits a natural smooth structure.
        \item[2-morphisms:] 4-cobordisms with corners $W:M_-\to M_+$, equipped with a side collar of their side boundary $\Sigma_\pm\times [-1,1]\times [\pm1,\pm\frac{1}{2})\inj W$ compatible with the collars of $M_\pm$, and considered up to diffeomorphisms preserving $M_\pm$ and preserving side collars up to a reparametrization of the $[-1,1]$-coordinate. Horizontal composition is gluing the collars. Vertical composition is gluing along $M$'s, whose smooth structure is well-defined up to diffeomorphism. 
    \end{description}
    It is symmetric monoidal with disjoint union.

    The \textbf{bicategory of non-compact $(2+1+1)$-cobordisms} $\Cob_{2+1+1}^{nc}$ is the symmetric monoidal sub-bicategory of $\Cob_{2+1+1}$ with the same objects and 1-morphisms but only those 2-morphisms where the source diffeomorphism is surjective on connected components, i.e. every connected component of the $4$-cobordisms have non-empty incoming boundary. 

    The \textbf{bicategory of $(2+1+\varepsilon)$-cobordisms} $\Cob_{2+1+\varepsilon}$ has the same objects and 1-morphisms as $\Cob_{2+1+1}$, but 2-morphisms are isotopy classes of diffeomorphisms preserving the side collars. It comes with a symmetric monoidal strict 2-functor $\Cob_{2+1+\varepsilon}\to \Cob_{2+1+1}^{nc} \subseteq \Cob_{2+1+1}$ which is the identity on objects and 1-morphisms and maps a diffeomorphism to its mapping cylinder.

    The \textbf{bicategory of non-compact $(2+1+\varepsilon)$-cobordisms} $\Cob_{2+1+\varepsilon}^{nc}$ is the locally full symmetric monoidal sub-bicategory of $\Cob_{n+1+\varepsilon}$ with the same objects but only those 1-morphisms where the target diffeomorphism is surjective on connected components, i.e. every connected component of the $3$-cobordisms have non-empty outgoing boundary, and all 2-morphisms between these. Note that we have switched incoming to outgoing here for the purposes of the examples we will study below.
\end{definition}
\begin{definition}
    Let $\CC$ be a symmetric monoidal bicategory. \\
    A \textbf{once-extended $4$-TQFT}, or a \textbf{$(2+1+1)$-TQFT}, or \textbf{$2$-$3$-$4$-TQFT}, with values in $\CC$ is a symmetric monoidal 2-functor 
    \begin{equation*}
        \ZZ: \Cob_{2+1+1}\to \CC\ .
    \end{equation*}
    A \textbf{non-compact once-extended $4$-TQFT} is a symmetric monoidal 2-functor 
    \begin{equation*}
        \ZZ: \Cob_{2+1+1}^{nc}\to \CC\ .
    \end{equation*}
    A \textbf{categorified $3$-TQFT}, or a \textbf{$(2+1+\varepsilon)$-TQFT}, is a symmetric monoidal 2-functor 
    \begin{equation*}
        \ZZ: \Cob_{2+1+\varepsilon}\to \CC\ .
    \end{equation*}
    A \textbf{boundary condition} to a once-extended 4-TQFT $\ZZ$ is a symmetric monoidal oplax natural transformation
    $$\RR: \Triv \Rightarrow \ZZ^\varepsilon$$
    where $\ZZ^\varepsilon: \Cob_{n+1+\varepsilon}\to \Cob_{n+1+1} \overset{\ZZ}{\to} \CC$ is the restriction of $\ZZ$ to $\Cob_{n+1+\varepsilon}$ and $\Triv: \Cob_{2+1+\varepsilon} \to \CC$ is constant equal to the monoidal unit.
\\
    A \textbf{non-compact boundary condition} to a once-extended 4-TQFT $\ZZ$ is a symmetric monoidal oplax natural transformation
    $$\RR: \Triv \Rightarrow \ZZ^{\varepsilon, nc}$$
    where $\ZZ^{\varepsilon, nc}: \Cob_{n+1+\varepsilon}^{nc}\to \Cob_{n+1+1} \overset{\ZZ}{\to} \CC$ is the restriction of $\ZZ$ to $\Cob_{n+1+\varepsilon}^{nc}$.
    % and $\Triv: \Cob_{2+1+\varepsilon}^{nc} \to \CC$ is constant equal to the monoidal unit. 
\end{definition}

\subsection{Categorified linear algebra}
Many of the constructions on this subject date back to Grothendieck, and it is uneasy to find a comprehensive and accessible reference. Details can be found in \cite{AdamekRosicky,DayStreetMonBicat,Kelly2, BCJReflDualPr, BJS, GJS}. We recommend \cite{DuggerSheavesHomotopy} for an introduction to the main ideas at play. 

Let us quickly recall the definition of the target bicategory we will consider for our once-extended TQFTs.
\begin{definition}\label{def:Bimod} Let $\Bbbk$ be a field.

The (strict) bicategory $\Cat_\Bbbk$ has objects small $\Bbbk$-linear categories, 1-morphisms linear functors and 2-morphisms natural transformations. It is symmetric monoidal with tensor product $\otimes$ which is Cartesian product on objects and tensor product on spaces of morphisms.

    The bicategory $\Bimod$ has objects small $\Bbbk$-linear categories, 1-morphisms $\CC\to\DD$ are profunctors, or bimodule functors $F:\CC\otimes\DD^{op}\to\Vect_\Bbbk$, and 2-morphisms natural transformations. Composition of 1-morphisms $F:\CC\otimes\DD^{op}\to\Vect_\Bbbk$ and $G:\DD\otimes\EE^{op}\to\Vect_\Bbbk$ is given by the coend
    \begin{equation*}
        (G\circ F)(C,E) := \int^{D\in\DD} F(C,D)\otimes G(D,E)
    \end{equation*} 
    It is symmetric monoidal with the usual tensor product $\otimes$ on linear categories.

There is a symmetric monoidal 2-functor $\Cat \to \Bimod$ which is the identity on objects and post-composition with the Yoneda embedding $\DD\to\widehat\DD:=\Fun(\DD^{op},\Vect_\Bbbk)$ on morphisms. 

    The (strict) bicategory $\Pr$ has objects presentable $\Bbbk$-linear categories, 1-morphisms cocontinuous functors and 2-morphisms natural transformations. It is symmetric monoidal with Kelly-Deligne tensor product.

    There is a symmetric monoidal fully faithful embedding $\widehat{(-)}:\Bimod \to\Pr$ which maps a category $\CC$ to its free cocompletion, or presheaf category $\widehat\CC := \Fun(\CC^{op},\Vect_\Bbbk)$. A profunctor $\CC\to\widehat\DD$ extends essentially uniquely to a cocontinuous functor $\widehat\CC\to \widehat\DD$ by the co-Yoneda Lemma, see \cite[Prop. 2.2.4]{DuggerSheavesHomotopy}. The essential image of $\widehat{(-)}$ consists of the presentable categories with enough compact-projective objects.
\end{definition}

\section{Skein theory as a categorified 3-TQFT}\label{Sec_Skein}
Ribbon categories are a class of particularly well-behaved $\mathbb E_2^{or}$-algebra in $\Cat_\Bbbk$ which have a graphical calculus, called skein theory, that makes sense in any 3-manifold.
Skein theory for ribbon categories has been formalized in \cite{TuraevBook}. Skein categories and skein module functors associated to cobordisms have been introduced in \cite{WalkerNotes, JohnsonFreydHeisenbergPicture}.
It is a well-known folklore result that these constructions form a categorified TQFT, though it has never been written down as a symmetric monoidal 2-functor. This is the subject of this section.

In order to adapt to the non-semisimple setting, we will also need to consider a tensor ideal $\II$ in a ribbon category $\AA$ as in \cite{CGPAdmissbleskein, BHskcat}. These can also be thought of as a class of $\mathbb E_2^{or}$-algebras, this time in the bicategory $\Pr$, by setting $\EE := \widehat\II$. They correspond to the following:
\begin{definition}
    A cp-ribbon category $\EE\in\Pr$ is an $\mathbb E_2^{or}$-algebra in $\Pr$, i.e. a presentable braided balanced category, such that:
    \begin{itemize}
        \item $\EE$ has enough compact-projectives, i.e. $\EE\simeq \widehat\II$ where $\II$ is the subcategory of compact-projective objects of $\EE$, 
        \item every object of $\II$ is dualizable, and
        \item the rigid balanced category $\AA$ of dualizable objects\footnote{For smallness issues, we take $\II$ and $\AA$ to be small subcategories of all compact-projective and dualizable objects that contain every isomorphism classes.} of $\EE$ is ribbon.
    \end{itemize}
From a cp-ribbon category $\EE$ we extract an inclusion $(\II\subseteq\AA)$ of a tensor ideal in a ribbon category, and $\EE$ can be reconstructed as $\widehat\II$ with tensor product, braiding and balancing induced by those of $\II$. Note that $\II$ will not in general contain the monoidal unit, but the unit of $\widehat\II$ is unique up to isomorphism, or can be reconstructed as $\Hom_\AA(-,\unit)$ using the inclusion into $\AA$.
\end{definition}
The examples coming from the setting of \cite{WalkerNotes} are precisely those where $\II=\AA$, i.e. where the unit of $\EE$ is compact projective.
%Skein theory is defined for any ribbon category $\AA$, which is a particularly well-behaved $\mathbb E_2^{or}$-algebra in $\Cat_\Bbbk$. It induces an $\mathbb E_2^{or}$-algebra $\widehat\AA$ in $\Pr$ by taking free cocompletion. We are not fully satisfied with this class of examples as it does not contain the non-semisimple examples of \cite{BJSS} or the categories or modules over the small quantum groups. Indeed it asks that the unit, which lies in $\AA\subseteq\widehat\AA$, is a compact-projective object. We will consider skein theory for the following class of examples.

Let us recall the basic definitions of skein theory, adapted to the non-semisimple setting. Details can be found in \cite{BHskcat}, see also \cite{CGPAdmissbleskein, RunkelSchweigertThamExciAdmSkeins, TuraevBook, CookeExcision, GJS}.
\begin{definition}
An \textbf{$\II$-labeling} $X$ in a closed surface $\Sigma$ is a collection of $\II$-colored framed oriented points in $\Sigma$. It is called \textbf{admissible} if there is at least one point per connected component of $\Sigma$.

An \textbf{$\II$-colored ribbon graph} $\oT$ compatible with two $\II$-labellings $X\subseteq \Sigma_-$ and $Y\subseteq \Sigma_+$ in a 3-cobordism $M:\Sigma_-\to\Sigma_+$ is the image of an embedding  $\Gamma \hookrightarrow M$ of a finite oriented graph $\Gamma$ equipped with a smooth framing, with edges colored by objects of $\II$, inner vertices colored by appropriate morphisms in $\II$ and boundary vertices matching the colored oriented framed points $X$ and $Y$. Sometimes we will draw coupons instead of framed vertices, see Figure \ref{fig:VertexCoupon}. It is called \textbf{admissible} if $\Gamma \inj M$ is surjective on connected components.

In our setting where cobordisms are equipped with a collar $\Sigma_\pm\times [\pm1,\pm\frac{1}{2})\inj M$, we also require that $\Gamma$ is strictly vertical inside the collar, i.e. $\Gamma \cap (\Sigma_-\times [-1,-\frac{1}{2})) = X \times [-1, -\frac{1}{2})$ and $\Gamma \cap (\Sigma_+\times (\frac{1}{2},1]) = Y \times (\frac{1}{2},1]$. This replaces the weaker transversality requirement of \cite{BHskcat} but does not affect the skein module where these are considered up to isotopy, as any ribbon graph transverse to the boundary is isotopic in an essentially unique way to one that is vertical on the collars. See Figure \ref{fig:RibbonGraph}.

The \textbf{relative admissible skein module} $\Sk_\II(M;X,Y)$ is the vector space freely generated by {isotopy classes} of admissible $\II$-colored ribbon graphs in $M$ compatible with $X$ and $Y$ quotiented by \textbf{admissible skein relation}, which are usual local skein relations happening in a cube $[0,1]^3\inj M$ where we require that the ribbon graphs intersect the boundary of the cube at least once. 

A diffeomorphism $f: M\to M'$ preserving orientation and collars induces an isomorphisms of vector spaces 
    \begin{equation*}
        \begin{array}{rcl}
f_{*}: \Sk_\II(M;X,Y) &\to& \Sk_\II(M';X,Y)\\
        T &\mapsto& f(T)
        \end{array}
    \end{equation*}
    which depends on $f$ only up to isotopy.
% which define a natural isomorphism $f_*: \SkFun_\II(M) \Rightarrow \SkFun_\II(M')\ .$ Note that if $f$ is isotopic to $g$, then $f_*=g_*$.
\end{definition}

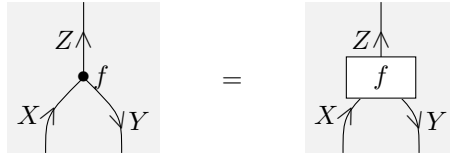
\begin{figure}
    \centering
\begin{tikzpicture}[baseline = 25pt]
\fill[gray!10] (0,0) rectangle (2,2);
\draw (0.5,0)..controls (0.5,0.5)..(1,1) node[midway, sloped]{$>$} node[midway, left]{$X$} node{$\bullet$} node[right]{$f$} -- (1,2)node[midway, sloped]{$>$} node[midway, left]{$Z$};
\draw (1.5,0)..controls (1.5,0.5)..(1,1) node[pos=0.4, sloped]{$>$} node[pos=0.4, right]{$Y$};
\end{tikzpicture} 
$\quad\quad = \quad\quad$
\begin{tikzpicture}[baseline = 25pt]
\fill[gray!10] (0,0) rectangle (2,2);
\draw (0.5,0)..controls (0.5,0.5)..(1,1) node[midway, sloped]{$>$} node[midway, left]{$X$} -- (1,2)node[midway, sloped]{$>$} node[midway, left]{$Z$};
\draw (1.5,0)..controls (1.5,0.5)..(1,1) node[pos=0.4, sloped]{$>$} node[pos=0.4, right]{$Y$};
\node[rectangle, draw, fill=white] at (1,1) {$\ \ f\ \ $};
\end{tikzpicture}
\caption{Left: A framed vertex (with blackboard framing, coming out of the page) colored by a morphism 
%\\ \indent \hspace{2cm}
    $f \in \Hom_\AA(\unit, X^*\otimes Z \otimes Y) \simeq \Hom_\AA(X, Z \otimes Y) \simeq \Hom_\AA( X\otimes Y^*, Z) \simeq  \cdots$
\\
Right: A coupon representing the same morphism $f \in  \Hom_\AA( X\otimes Y^*, Z)$.}
    \label{fig:VertexCoupon}
\end{figure}
\begin{figure}
    \centering
\begin{tikzpicture}    
    \fill[gray!10] (1,1) -- (4,1)..controls (6,1) and (6,1.3) .. (9,1.5) -- (9,-1.5) ..controls (6,-1.3) and (6,-1).. (4,-1) -- (1,-1) arc (270:90:1);
    \draw[gray] (0,0) arc(180:540:1);
    \draw[gray] (0,0) arc(180:360:1 and 0.5);
    \draw[dashed, gray] (0,0) arc(180:0:1 and 0.5);
    \draw[gray!30] (3,0) arc(180:540:1);
    \draw[gray!30] (3,0) arc(180:360:1 and 0.5);
    \draw[dashed, gray!30] (3,0) arc(180:0:1 and 0.5);
    \draw[gray] (1,1) -- (4,1)..controls (6,1) and (6,1.3) .. (9,1.5);
    \draw[gray] (1,-1) -- (4,-1)..controls (6,-1) and (6,-1.3) .. (9,-1.5);
    \node[gray] at (8.5,1.2){$M$};

    \draw (0.7,0.3) node{$\bullet$} node[above]{$Y,-$} -- (3.7,0.3) node[pos = 0.5, sloped]{$<$} node[pos = 0.5, above]{$Y$} ..controls (4.7, 0.3) and (4.7,-0.7) .. (6,0) node{$\bullet$} node[below =10pt, right = -10pt]{$f:X\otimes Y^* \to Z$} -- (9,0)node[pos = 0.5, sloped]{$>$} node[pos = 0.5, above]{$Z$};
    \draw[gray!10, line width = 5pt] (4,-0.6) ..controls (5,-0.6) and (5,0.5) .. (6,0.3);
    \draw (1,-0.6) node{$\bullet$} node[above]{$X,+$} -- (4,-0.6) node[pos = 0.5, sloped]{$>$} node[pos = 0.5, above]{$X$} ..controls (5,-0.6) and (5,0.5) .. (6,0);

    \node[xscale = 8, rotate = -90, gray] at (2.5,1.3){$\{$};
    \node[gray] at (2.5,1.6){$\Sigma_- \times [-1,-\frac{1}{2})$};
\end{tikzpicture}
\caption{An $\II$-colored ribbon graph in a 3-manifold with collared boundary.}
    \label{fig:RibbonGraph}
\end{figure}
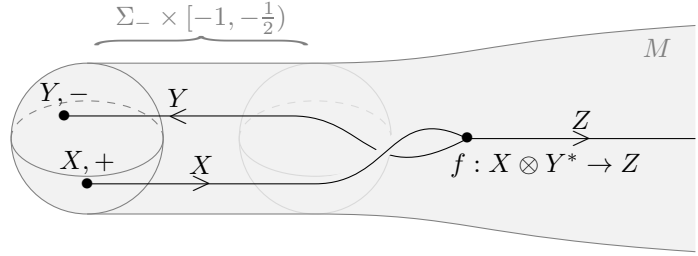
\begin{definition} 
Given composeable cobordisms $\Sigma_1 \overset{M_{12}} \to \Sigma_2 \overset{M_{23}}\to \Sigma_3$ and $\II$-labelings $X_1\subseteq \Sigma_1,\ X_2\subseteq \Sigma_2, X_3\subseteq \Sigma_3$ the \textbf{gluing of skeins} is the linear map 
\begin{equation*}
\begin{array}{rclcl}
\Sk_\II(M_{12};X_1,X_2) &\otimes &\Sk_\II(M_{23};X_2,X_3)& \to &\Sk_\II(M_{23}\circ M_{12};X_1,X_3)\\
T & \otimes & T' &\mapsto & \oT \cup \oT'
\end{array}
    \end{equation*}
where $\oT,\ \oT'$ are ribbon graph representatives of the skeins $T, T'$, and $\oT \cup \oT'\subseteq M_{12} \underset{\Sigma_2\times I}{\cup }M_{23} = M_{23}\circ M_{12}$ is a ribbon graph as $\oT$ and $\oT'$ are both vertical in the collars of $\Sigma_2$ and glue smoothly. See Figure \ref{fig:GluingSkeins}.
\end{definition}
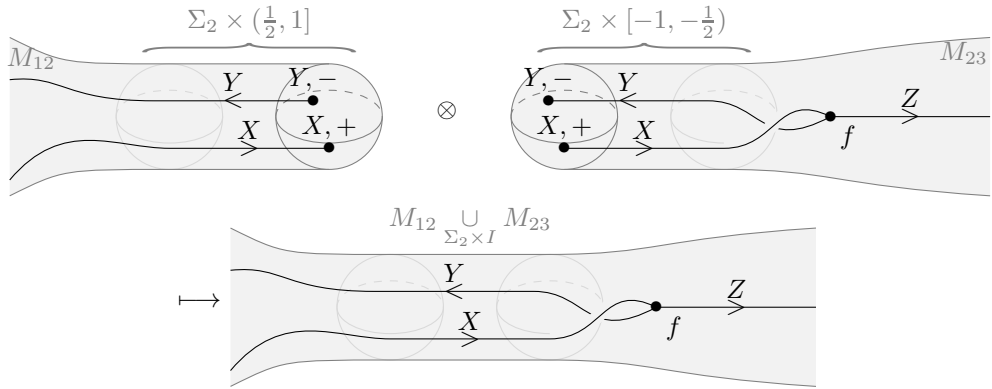
\begin{figure}
    \centering
\begin{tikzpicture} [scale = 0.7, baseline = 0pt]  
    \fill[gray!10] (-2,1.5).. controls (-1,1).. (1,1) -- (4,1) arc (90:-90:1) --  (1,-1).. controls (-1,-1).. (-2,-1.5)--(-2,1.5);
    \draw[gray!30] (0,0) arc(180:540:1);
    \draw[gray!30] (0,0) arc(180:360:1 and 0.5);
    \draw[dashed, gray!30] (0,0) arc(180:0:1 and 0.5);
    \draw[gray] (3,0) arc(180:540:1);
    \draw[gray] (3,0) arc(180:360:1 and 0.5);
    \draw[dashed, gray] (3,0) arc(180:0:1 and 0.5);
    \draw[gray](-2,1.5).. controls (-1,1).. (1,1) -- (4,1);
    \draw[gray](-2,-1.5).. controls (-1,-1)..  (1,-1) -- (4,-1);
    \node[gray] at (-1.6,1.1){$M_{12}$};

    \draw (-2,0.7)..controls (-1,0.8) and (-1,0.3).. (0.7,0.3) -- (3.7,0.3) node[pos = 0.5, sloped]{$<$} node[pos = 0.5, above]{$Y$} node{$\bullet$} node[above]{$Y,-$};
    \draw (-2,-1.2)..controls (-1,0) and (0,-0.6)..  (1,-0.6) -- (4,-0.6) node[pos = 0.5, sloped]{$>$} node[pos = 0.5, above]{$X$} node{$\bullet$} node[above]{$X,+$};

    \node[xscale = 8, rotate = -90, gray] at (2.5,1.3){$\{$};
    \node[gray] at (2.5,1.8){$\Sigma_2 \times (\frac{1}{2},1]$};
\end{tikzpicture} 
$ \otimes  $
\begin{tikzpicture}  [scale = 0.7, baseline = 0pt]  
    \fill[gray!10] (1,1) -- (4,1)..controls (6,1) and (6,1.3) .. (9,1.5) -- (9,-1.5) ..controls (6,-1.3) and (6,-1).. (4,-1) -- (1,-1) arc (270:90:1);
    \draw[gray] (0,0) arc(180:540:1);
    \draw[gray] (0,0) arc(180:360:1 and 0.5);
    \draw[dashed, gray] (0,0) arc(180:0:1 and 0.5);
    \draw[gray!30] (3,0) arc(180:540:1);
    \draw[gray!30] (3,0) arc(180:360:1 and 0.5);
    \draw[dashed, gray!30] (3,0) arc(180:0:1 and 0.5);
    \draw[gray] (1,1) -- (4,1)..controls (6,1) and (6,1.3) .. (9,1.5);
    \draw[gray] (1,-1) -- (4,-1)..controls (6,-1) and (6,-1.3) .. (9,-1.5);
    \node[gray] at (8.5,1.2){$M_{23}$};

    \draw (0.7,0.3) node{$\bullet$} node[above]{$Y,-$} -- (3.7,0.3) node[pos = 0.5, sloped]{$<$} node[pos = 0.5, above]{$Y$} ..controls (4.7, 0.3) and (4.7,-0.7) .. (6,0) node{$\bullet$} node[below right]{$f$} -- (9,0)node[pos = 0.5, sloped]{$>$} node[pos = 0.5, above]{$Z$};
    \draw[gray!10, line width = 5pt] (4,-0.6) ..controls (5,-0.6) and (5,0.5) .. (6,0.3);
    \draw (1,-0.6) node{$\bullet$} node[above]{$X,+$} -- (4,-0.6) node[pos = 0.5, sloped]{$>$} node[pos = 0.5, above]{$X$} ..controls (5,-0.6) and (5,0.5) .. (6,0);

    \node[xscale = 8, rotate = -90, gray] at (2.5,1.3){$\{$};
    \node[gray] at (2.5,1.8){$\Sigma_2 \times [-1,-\frac{1}{2})$};
\end{tikzpicture} \\
$\longmapsto$
\begin{tikzpicture}  [scale = 0.7, baseline = 0pt]      
    \fill[gray!10] (-2,1.5).. controls (-1,1).. (1,1) -- (4,1) arc (90:-90:1) --  (1,-1).. controls (-1,-1).. (-2,-1.5)--(-2,1.5);
    \fill[gray!10] (1,1) -- (4,1)..controls (6,1) and (6,1.3) .. (9,1.5) -- (9,-1.5) ..controls (6,-1.3) and (6,-1).. (4,-1) -- (1,-1) arc (270:90:1);
    \draw[gray!30] (0,0) arc(180:540:1);
    \draw[gray!30] (0,0) arc(180:360:1 and 0.5);
    \draw[dashed, gray!30] (0,0) arc(180:0:1 and 0.5);
    \draw[gray!30] (3,0) arc(180:540:1);
    \draw[gray!30] (3,0) arc(180:360:1 and 0.5);
    \draw[dashed, gray!30] (3,0) arc(180:0:1 and 0.5);
    \draw[gray](-2,1.5).. controls (-1,1)..  (1,1) -- (4,1)..controls (6,1) and (6,1.3) .. (9,1.5);
    \draw[gray] (-2,-1.5).. controls (-1,-1)..  (1,-1) -- (4,-1)..controls (6,-1) and (6,-1.3) .. (9,-1.5);
    \node[gray] at (2.5,1.5){$M_{12}\underset{\Sigma_2\times I}{\cup} M_{23}$};

    \draw  (-2,0.7)..controls (-1,0.8) and (-1,0.3).. (0.7,0.3) -- (3.7,0.3) node[pos = 0.5, sloped]{$<$} node[pos = 0.5, above]{$Y$} ..controls (4.7, 0.3) and (4.7,-0.7) .. (6,0) node{$\bullet$} node[below right]{$f$} -- (9,0)node[pos = 0.5, sloped]{$>$} node[pos = 0.5, above]{$Z$};
    \draw[gray!10, line width = 5pt] (4,-0.6) ..controls (5,-0.6) and (5,0.5) .. (6,0.3);
    \draw (-2,-1.2)..controls (-1,0) and (0,-0.6)..  (1,-0.6) -- (4,-0.6) node[pos = 0.5, sloped]{$>$} node[pos = 0.5, above]{$X$} ..controls (5,-0.6) and (5,0.5) .. (6,0);
\end{tikzpicture}
\caption{The gluing of skeins.}
    \label{fig:GluingSkeins}
\end{figure}
\begin{definition}
The \textbf{skein category} $\SkCat_\II(\Sigma)$ of a surface $\Sigma$ has:
\begin{description}
    \item Objects: Admissible $\II$-labelings in $\Sigma$
    \item Morphisms: The relative admissible skein module $\Sk_\II(\Sigma \times [-1,1];X,Y)$
    \item Composition: Gluing of skeins, i.e. $$\Sk_\II(\Sigma \times [-1,1];X_1,X_2) \otimes \Sk_\II(\Sigma \times [-1,1];X_2,X_3) \to \Sk_\II(\Sigma \times [-1,1];X_1,X_3)$$
    using the unitor diffeomorphism $\Sigma \times [-1,1]\circ \Sigma \times [-1,1] \simeq \Sigma \times [-1,1]$.
\end{description}

The \textbf{admissible skein bimodule functor} of $M:\Sigma_-\to \Sigma_+$ is the functor \begin{equation}
    \begin{aligned}
\SkFun_\II(M): \SkCat_\II(\Sigma_+)\otimes\SkCat_\II(\Sigma_-)^{op} &\to \Vect\\
(Y,X)&\mapsto \Sk_\II(M;X,Y)
    \end{aligned}
        \end{equation}
The action of morphisms in $\SkCat_\II(\Sigma_-)$ and $\SkCat_\II(\Sigma_+)$ is induced by gluing of skeins and unitor diffeomorphisms. For $T_\pm$ morphisms in $\SkCat_\II(\Sigma_\pm)$ and $S$ a skein in $M$ with appropriate endpoints, we will denote $T_-\cdot S \cdot T_+$ the skein in $M$ obtained by acting on $S$.

A diffeomorphism $f: M\to M'$ defines a natural isomorphism $f_*: \SkFun_\II(M) \Rightarrow \SkFun_\II(M')$ whose components has been defined above. It depends on $f$ only up to isotopy.
\end{definition}

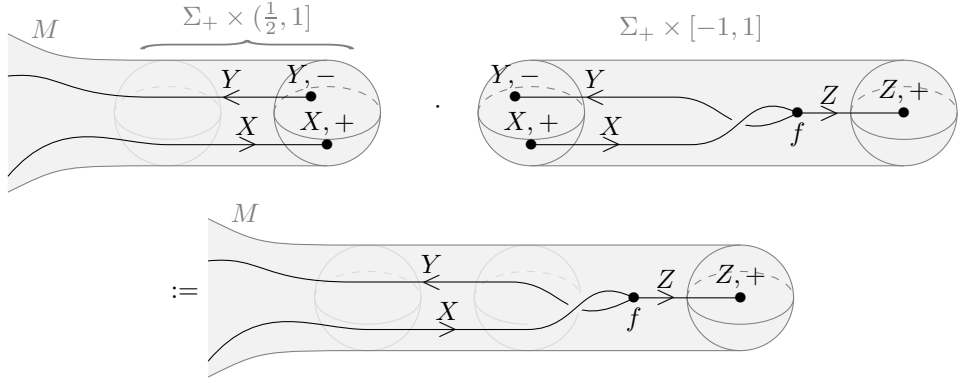
\begin{figure}
    \centering
\begin{tikzpicture} [scale = 0.7, baseline = 0pt]  
    \fill[gray!10] (-2,1.5).. controls (-1,1).. (1,1) -- (4,1) arc (90:-90:1) --  (1,-1).. controls (-1,-1).. (-2,-1.5)--(-2,1.5);
    \draw[gray!30] (0,0) arc(180:540:1);
    \draw[gray!30] (0,0) arc(180:360:1 and 0.5);
    \draw[dashed, gray!30] (0,0) arc(180:0:1 and 0.5);
    \draw[gray] (3,0) arc(180:540:1);
    \draw[gray] (3,0) arc(180:360:1 and 0.5);
    \draw[dashed, gray] (3,0) arc(180:0:1 and 0.5);
    \draw[gray](-2,1.5).. controls (-1,1).. (1,1) -- (4,1);
    \draw[gray](-2,-1.5).. controls (-1,-1)..  (1,-1) -- (4,-1);
    \node[gray] at (-1.3,1.6){$M$};

    \draw (-2,0.7)..controls (-1,0.8) and (-1,0.3).. (0.7,0.3) -- (3.7,0.3) node[pos = 0.5, sloped]{$<$} node[pos = 0.5, above]{$Y$} node{$\bullet$} node[above]{$Y,-$};
    \draw (-2,-1.2)..controls (-1,0) and (0,-0.6)..  (1,-0.6) -- (4,-0.6) node[pos = 0.5, sloped]{$>$} node[pos = 0.5, above]{$X$} node{$\bullet$} node[above]{$X,+$};

    \node[xscale = 8, rotate = -90, gray] at (2.5,1.3){$\{$};
    \node[gray] at (2.5,1.8){$\Sigma_+ \times (\frac{1}{2},1]$};
\end{tikzpicture} 
$ \cdot \quad $
\begin{tikzpicture}  [scale = 0.7, baseline = 0pt]  
    \fill[gray!10] (1,1) -- (8,1) arc(90:-90:1) -- (1,-1) arc (270:90:1);
    \draw[gray] (7,0) arc(180:540:1);
    \draw[gray] (7,0) arc(180:360:1 and 0.5);
    \draw[dashed, gray] (7,0) arc(180:0:1 and 0.5);
    
    \draw[gray] (0,0) arc(180:540:1);
    \draw[gray] (0,0) arc(180:360:1 and 0.5);
    \draw[dashed, gray] (0,0) arc(180:0:1 and 0.5);
    
    \draw[gray] (1,1) -- (8,1);
    \draw[gray] (1,-1) -- (8,-1);

    \draw (0.7,0.3) node{$\bullet$} node[above]{$Y,-$} -- (3.7,0.3) node[pos = 0.5, sloped]{$<$} node[pos = 0.5, above]{$Y$} ..controls (4.7, 0.3) and (4.7,-0.7) .. (6,0) node{$\bullet$} node[below]{$f$} -- (8,0)node[pos = 0.3, sloped]{$>$} node[pos = 0.3, above]{$Z$} node{$\bullet$} node[above]{$Z,+$};
    \draw[gray!10, line width = 5pt] (4,-0.6) ..controls (5,-0.6) and (5,0.5) .. (6,0.3);
    \draw (1,-0.6) node{$\bullet$} node[above]{$X,+$} -- (4,-0.6) node[pos = 0.5, sloped]{$>$} node[pos = 0.5, above]{$X$} ..controls (5,-0.6) and (5,0.5) .. (6,0);

    \node[gray] at (4,1.6){$\Sigma_+ \times [-1,1]$};
\end{tikzpicture} \\
$:=$
\begin{tikzpicture}  [scale = 0.7, baseline = 0pt]      
    \fill[gray!10] (-2,1.5).. controls (-1,1).. (1,1) -- (4,1) arc (90:-90:1) --  (1,-1).. controls (-1,-1).. (-2,-1.5)--(-2,1.5);
    \fill[gray!10] (1,1) -- (8,1) arc(90:-90:1) -- (1,-1) arc (270:90:1);
    \draw[gray] (7,0) arc(180:540:1);
    \draw[gray] (7,0) arc(180:360:1 and 0.5);
    \draw[dashed, gray] (7,0) arc(180:0:1 and 0.5);
    
    \draw[gray!30] (0,0) arc(180:540:1);
    \draw[gray!30] (0,0) arc(180:360:1 and 0.5);
    \draw[dashed, gray!30] (0,0) arc(180:0:1 and 0.5);
    \draw[gray!30] (3,0) arc(180:540:1);
    \draw[gray!30] (3,0) arc(180:360:1 and 0.5);
    \draw[dashed, gray!30] (3,0) arc(180:0:1 and 0.5);
    \draw[gray](-2,1.5).. controls (-1,1)..  (1,1) -- (8,1);
    \draw[gray] (-2,-1.5).. controls (-1,-1)..  (1,-1) -- (8,-1);
    \node[gray] at (-1.3,1.6){$M$};

    \draw  (-2,0.7)..controls (-1,0.8) and (-1,0.3).. (0.7,0.3) -- (3.7,0.3) node[pos = 0.5, sloped]{$<$} node[pos = 0.5, above]{$Y$} ..controls (4.7, 0.3) and (4.7,-0.7) .. (6,0) node{$\bullet$} node[below]{$f$} -- (8,0)node[pos = 0.3, sloped]{$>$} node[pos = 0.3, above]{$Z$} node{$\bullet$} node[above]{$Z,+$};
    \draw[gray!10, line width = 5pt] (4,-0.6) ..controls (5,-0.6) and (5,0.5) .. (6,0.3);
    \draw (-2,-1.2)..controls (-1,0) and (0,-0.6)..  (1,-0.6) -- (4,-0.6) node[pos = 0.5, sloped]{$>$} node[pos = 0.5, above]{$X$} ..controls (5,-0.6) and (5,0.5) .. (6,0);
\end{tikzpicture}
\caption{The action of the skein category on the skein module. At the top left is a skein $S \in \Sk_\II(M;X\sqcup Y)$. At the top right is a morphism $T_+ \in \Hom_{\SkCat_\II(\Sigma_+)}(X\sqcup Y, Z)$. At the bottom is the skein $S\cdot T_+ \in \Sk_\II(M;Z)$.}
    \label{fig:ActSkCatSkMod}
\end{figure}
Note that $\SkFun_\II(M)$ is a morphism from $\SkCat_\II(\Sigma_+)$ to $\SkCat_\II(\Sigma_-)$ in $\Bimod$. This contravariance also appears in \cite{WalkerNotes}, where skeins are treated as the dual theory to a TQFT. This is only a nuisance and not a deep issue, since $\operatorname{Cob} \simeq \operatorname{Cob}^{op}$ via orientation reversal. However, instead of using orientation reversal here and there, we will keep this contravariance. We will denote $\Bimod^{hop}$ the opposite bicategory in the horizontal direction (i.e. for 1-morphisms).

The following result has been shown in \cite[Thm. 2.21]{BHskcat} or \cite[Thm. 3.1]{RunkelSchweigertThamExciAdmSkeins} in a 1-categorical setting, see also \cite[Thm. 4.4.2]{WalkerNotes}. The definition of a symmetric monoidal 2-functor is recalled in \cite[Def. A.5 and 2.5]{SPPhD}.
\begin{theorem}\label{thm:SkCategTQFT}
Given a tensor ideal $\II$ in a ribbon category $\AA$, there exists a categorified TQFT 
    $$\SkFun_\II:\Cob_{2+1+\varepsilon} \to \Bimod^{hop}$$
    with $\SkFun_\II(\Sigma) = \SkCat_\II(\Sigma),\ \SkFun_\II(M) = \SkFun_\II(M)$ and $\SkFun_\II(f) = f_*$.
\end{theorem}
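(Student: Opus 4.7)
The plan is to upgrade the assignments constructed in this section into a symmetric monoidal 2-functor by supplying coherent compositors and unitors and by checking compatibility with disjoint union. The definitions already guarantee that the data is well-typed at each level: $\SkCat_\II(\Sigma)$ is a small $\Bbbk$-linear category, $\SkFun_\II(M):\SkCat_\II(\Sigma_+)\otimes\SkCat_\II(\Sigma_-)^{op}\to\Vect$ is a profunctor hence a 1-morphism of $\Bimod^{hop}$, and the assignment $f\mapsto f_*$ is strictly functorial in $f$ and depends only on its isotopy class, producing a strict functor on each hom-category of $\Cob_{2+1+\varepsilon}$.

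Second, build the unitor and compositor. The unitor at $\Sigma$ is essentially an identity: by the very definition of $\SkCat_\II(\Sigma)$ one has $\SkFun_\II(\Sigma\times[-1,1])(Y,X)=\Sk_\II(\Sigma\times[-1,1];X,Y)=\Hom_{\SkCat_\II(\Sigma)}(X,Y)$, which is the identity profunctor on $\SkCat_\II(\Sigma)$. For composable 1-cells $M_{12}:\Sigma_1\to\Sigma_2$ and $M_{23}:\Sigma_2\to\Sigma_3$, the gluing of skeins $T\otimes T'\mapsto\overline T\cup\overline T'$ is dinatural in $Y\in\SkCat_\II(\Sigma_2)$ (post-composition and pre-composition with a morphism $\phi:Y\to Y'$ on either factor both insert $\phi$ at the gluing interface), so it descends to a map out of the coend that computes composition of profunctors:
\begin{equation*}
\int^{Y}\Sk_\II(M_{12};X,Y)\otimes\Sk_\II(M_{23};Y,Z)\longrightarrow\Sk_\II(M_{23}\circ M_{12};X,Z).
\end{equation*}
The excision property for admissible skein modules established in \cite{BHskcat, RunkelSchweigertThamExciAdmSkeins} asserts that this map is a natural isomorphism, providing the required invertible compositor 2-cell. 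Associativity of gluing of skeins along iterated collars, together with the definitional identification of the unitor, then yield the pentagon and triangle axioms.

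Third, upgrade to a symmetric monoidal 2-functor. On objects, admissibility in $\Sigma\sqcup\Sigma'$ is equivalent to admissibility of each restriction, so one has canonical equivalences $\SkCat_\II(\Sigma\sqcup\Sigma')\simeq\SkCat_\II(\Sigma)\otimes\SkCat_\II(\Sigma')$, and $\SkCat_\II(\emptyset)$ is the monoidal unit of $\Bimod$. On 1-cells, any ribbon graph in $M\sqcup M'$ splits uniquely as a disjoint union of admissible graphs in $M$ and $M'$, yielding natural isomorphisms of profunctors $\SkFun_\II(M\sqcup M')\simeq\SkFun_\II(M)\otimes\SkFun_\II(M')$. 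The hexagon and symmetry coherences then reduce to the fact that reordering disjoint summands merely reorders the two pieces of a ribbon graph, and compatibility with 2-cells is immediate from $(f\sqcup g)_*$ acting componentwise.

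The main obstacle is the excision step used above. Verifying that the coend map is an isomorphism requires, on the one hand, showing every admissible ribbon graph in $M_{23}\circ M_{12}$ can be isotoped through admissible graphs to be transverse to, and then vertical near, the gluing surface $\Sigma_2$; and on the other hand, showing that two such choices differ by an isotopy realizing exactly the coend equivalence relation. The admissibility hypothesis is delicate here, since the restriction of an admissible graph to one side need not be admissible and one may have to insert auxiliary strands crossing $\Sigma_2$ colored by projective objects. This is precisely the content of the excision theorems cited; once they are in hand, the remaining verifications are routine naturality arguments and direct applications of the universal property of the coend.
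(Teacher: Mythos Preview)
Your proposal is correct and follows essentially the same approach as the paper: both build the compositor via gluing of skeins descending to the coend, invoke the excision results of \cite{BHskcat, RunkelSchweigertThamExciAdmSkeins} for invertibility, identify the unitor with the $\Hom$-profunctor, and obtain the symmetric monoidal structure from disjoint union with trivial coherence modifications. The only cosmetic difference is that the paper phrases the dinaturality check geometrically (as an explicit isotopy sliding the collar of $\Sigma_2$ from one side to the other) rather than algebraically, and it briefly recalls Walker's surjectivity/injectivity argument for excision rather than treating it as a pure black box.
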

\begin{proof} %In order to show that $\SkFun_\II$ is a 2-functor, 
We must exhibit for any composable pair $\Sigma_1 \overset{M_{12}} \to \Sigma_2 \overset{M_{23}}\to \Sigma_3$ an isomorphism 
$$\SkFun(M_{23}) \circ \SkFun(M_{12}) \Tilde{\to}\SkFun(M_{23}\circ M_{12})$$
compatible with 2-morphisms in $M$ and $M'$.

This is the data for any objects $X_1, X_3$ of $\SkCat(\Sigma_1), \SkCat(\Sigma_3)$ of an isomorphism
    \begin{equation*}\label{eq:skfun-composes}
            \int^{X_2 \in \SkCat(\Sigma_2)} \Sk_\II(M_{12};X_1,X_2) \otimes \Sk_\II(M_{23};X_2,X_3) \tilde\to \Sk_\II(M_{12}\underset{\Sigma_2}{\cup}M_{23};X_1,X_3)\ .
    \end{equation*}
    This map is given by gluing of skeins. It descends to a morphism on the coend as the coend relations are induced by an isotopy $\varphi$ of $M_{12}\underset{\Sigma_2}{\cup}M_{23}$ which pushes a collar of $\Sigma_2$ in $M_{12}$ into a collar of $\Sigma_2$ in $M_{23}$, see Figure \ref{fig:CoendIsotopy}. 
    %It can be described explicitly by interpolating between the unitor diffeomorphisms for $M_{12}$ and $M_{23}$.
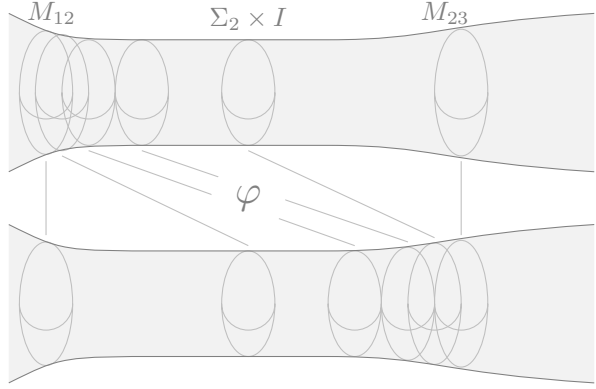
\begin{figure}
    \centering
\begin{tikzpicture}  [scale = 0.7, baseline = 0pt]      
    \fill[gray!10] (-2,1.5).. controls (-1,1).. (1,1) -- (4,1) arc (90:-90:1) --  (1,-1).. controls (-1,-1).. (-2,-1.5)--(-2,1.5);
    \fill[gray!10] (1,1) -- (4,1)..controls (6,1) and (6,1.3) .. (9,1.5) -- (9,-1.5) ..controls (6,-1.3) and (6,-1).. (4,-1) -- (1,-1) arc (270:90:1);
    \draw[gray!50] (-1.8,0) arc(180:540:0.5 and 1.17) arc(180:360:0.5 and 0.5);
    \draw[gray!50] (-1.5,0) arc(180:540:0.5 and 1.1) arc(180:360:0.5 and 0.5);
    \draw[gray!50] (-1,0) arc(180:540:0.5 and 1) arc(180:360:0.5 and 0.5);
    \draw[gray!50] (0,0) arc(180:540:0.5 and 1) arc(180:360:0.5 and 0.5);
    \draw[gray!50] (2,0) arc(180:540:0.5 and 1) arc(180:360:0.5 and 0.5);
    \draw[gray!50] (6,0) arc(180:540:0.5 and 1.2) arc(180:360:0.5 and 0.5);
    \draw[gray](-2,1.5).. controls (-1,1)..  (1,1) -- (4,1)..controls (6,1) and (6,1.3) .. (9,1.5);
    \draw[gray] (-2,-1.5).. controls (-1,-1)..  (1,-1) -- (4,-1)..controls (6,-1) and (6,-1.3) .. (9,-1.5);
    \node[gray] at (-1.2,1.5){$M_{12}$};
    \node[gray] at (2.5,1.4){${\Sigma_2\times I}$};
    \node[gray] at (6.2,1.5){$ M_{23}$};
\begin{scope}[yshift = -4cm]
    \fill[gray!10] (-2,1.5).. controls (-1,1).. (1,1) -- (4,1) arc (90:-90:1) --  (1,-1).. controls (-1,-1).. (-2,-1.5)--(-2,1.5);
    \fill[gray!10] (1,1) -- (4,1)..controls (6,1) and (6,1.3) .. (9,1.5) -- (9,-1.5) ..controls (6,-1.3) and (6,-1).. (4,-1) -- (1,-1) arc (270:90:1);
    \draw[gray!50] (-1.8,0) arc(180:540:0.5 and 1.17) arc(180:360:0.5 and 0.5);
    \draw[gray!50] (2,0) arc(180:540:0.5 and 1) arc(180:360:0.5 and 0.5);
    \draw[gray!50] (4,0) arc(180:540:0.5 and 1) arc(180:360:0.5 and 0.5);
    \draw[gray!50] (5,0) arc(180:540:0.5 and 1.07) arc(180:360:0.5 and 0.5);
    \draw[gray!50] (5.5,0) arc(180:540:0.5 and 1.15) arc(180:360:0.5 and 0.5);
    \draw[gray!50] (6,0) arc(180:540:0.5 and 1.2) arc(180:360:0.5 and 0.5);
    \draw[gray](-2,1.5).. controls (-1,1)..  (1,1) -- (4,1)..controls (6,1) and (6,1.3) .. (9,1.5);
    \draw[gray] (-2,-1.5).. controls (-1,-1)..  (1,-1) -- (4,-1)..controls (6,-1) and (6,-1.3) .. (9,-1.5);        
\end{scope}
\draw[gray!50, thin](-1.3,-1.3) -- (-1.3,-2.7);
\draw[gray!50, thin](-1,-1.2) -- (2.5,-2.9);
\draw[gray!50, thin](-0.5,-1.1) -- (4.5,-2.9);
\draw[gray!50, thin](0.5,-1.1) -- (5.5,-2.83);
\draw[gray!50, thin](2.5,-1.1) -- (6,-2.75);
\draw[gray!50, thin](6.5,-1.3) -- (6.5,-2.7);
    \node[gray, circle, fill=white, scale = 1.5] at (2.5,-2){$\varphi$};
\end{tikzpicture}  
\caption{The isotopy $\varphi$ of $M_{12}\underset{\Sigma_2}{\cup}M_{23}$ realizing the coend relations. It intertwines the two maps $M_{12}\underset{\Sigma_2\times I}{\cup}\Sigma_2\times [-1,1]\underset{\Sigma_2\times I}{\cup}M_{23} \to M_{12}\underset{\Sigma_2\times I}{\cup}M_{23}$ induced by the unitor diffeomorphisms for $M_{12}$ and $M_{23}$.}
    \label{fig:CoendIsotopy}
\end{figure}
    This map is shown to be an isomorphism in \cite[Thm. 2.21]{BHskcat}, \cite[Thm. 3.1]{RunkelSchweigertThamExciAdmSkeins}. Let us recall the proof for the reader's convenience. It happens in two steps: 

    First, we apply \cite[Lem. 2.11]{BHskcat} to reduce the RHS to the space of skeins in $M_{12}\underset{\Sigma_2}{\cup}M_{23}$ which intersect $\Sigma_2$ at least once on every connected components, modulo isotopies preserving this property. This way we need not worry about admissibility conditions below.
    
    Second, we follow the arguments of \cite[Thm. 4.4.2]{WalkerNotes}. The gluing of skeins is surjective as any skein in generic position in $M_{12}\underset{\Sigma_2}{\cup}M_{23}$ will intersect $\Sigma_2$ transversely, and can then be isotoped to be completely vertical in a collar of $\Sigma_2$. For injectivity, we need to prove that two skeins that are isotopic in $M_{12}\underset{\Sigma_2}{\cup}M_{23}$ are related by isotopies supported away from the collar of $\Sigma_2$ and coend relations. Any isotopy can be decomposed into isotopies supported in small balls, and up to conjugating by the isotopy $\varphi$ described above (i.e. using coend relations), these balls can be pushed to be away from the collar of $\Sigma_2$.
    
    This isomorphism is natural with respect to diffeomorphisms in $M$ and $M'$ preserving collars as $(f \cup f')(\oT \cup \oT') = f(\oT) \cup f'(\oT')$.

    By definition, we also have $$\SkFun_\II(\id_\Sigma) = \Hom_{\SkCat_\II(\Sigma)}(-,-) = \id_{\SkCat_\II(\Sigma)}$$
    and these isomorphisms are readily checked to be compatible with associators and unitors. This proves that $\SkFun_\II$ is a 2-functor.

    We now turn to symmetric monoidality. We have an isomorphism of vector spaces 
    $$\Sk_\II(M;X,Y)\otimes \Sk_\II(M';X',Y') \tilde\to \Sk_\II(M\sqcup M'; X\sqcup X', Y\sqcup Y')$$
    simply given by disjoint union of ribbon graphs. It induces an isomorphism of categories $$\SkCat_\II(\Sigma)\otimes \SkCat_\II(\Sigma') \tilde\to \SkCat_\II(\Sigma\sqcup \Sigma')$$ which is disjoint union on objects, and a natural isomorphism $$\SkFun_\II(M)\otimes \SkFun_\II(M') \tilde\Rightarrow \SkFun_\II(M\sqcup M')$$ where we implicitly used the isomorphism of categories above to match the source and target. 

    The skein category of the empty surface has only one object, the empty collection of points, with endomorphisms scalars times the empty ribbon graph, its identity, which is indeed the monoidal unit in $\Bimod$.

    All the coherence modifications of \cite[Def. 2.5]{SPPhD} are identities.
\end{proof}

\section{Extended non-semisimple Crane--Yetter}\label{Sec_ExtendedCY}
In this section we will extend the categorified TQFT 
    $$\SkFun_\II:\Cob_{2+1+\varepsilon} \to \Bimod^{hop}$$
into a possibly non-compact once-extended 4-TQFT
    $$\SS_\II:\Cob_{2+1+1} \to \Bimod^{hop}$$
under some additional conditions and structure on the category $\AA$. 

We will construct this TQFT by specifying its values on the standard attachments of 0--4-handles, check that they satisfy handle cancellation and invariance under reversal of the attaching spheres and use the main result of \cite{HaiounHandle}.

This TQFT is a once-extended version of \cite{CGHP}, and the values on handle attachments are constructed there. We will recall the definitions for the readers convenience, but refer to \cite{CGHP} for details.

\subsection{Hypothesis and structure on the input category}\label{Subs_HypothCat}
In this section, $\AA$ is a \textbf{finite ribbon tensor category} in the sense of \cite{EGNO} over an algebraically closed field $\Bbbk$ and $\II \subseteq \AA$ is the tensor ideal of projective objects. We denote $P_\unit \in \II$ a projective cover of the unit, equipped with its projection $\varepsilon_\unit: P_\unit \to \unit$. For any projective object $P$, let $s_P: P\to P\otimes P_\unit$ be a section of $\id_P \otimes \varepsilon_\unit: P\otimes P_\unit \to P$. By definition this means that $\id_P = (\id_P \otimes \varepsilon_\unit)\circ s_P$ which at the levels of skeins means that we can introduce a $P_\unit$-colored strand ending with a $\varepsilon_\unit$ whenever there is a projective-colored strand.

We further assume that $\AA$ is \textbf{unimodular} which is equivalent to asking that $P_\unit$ is self dual. This implies that $\AA$ has a non-degenerate modified trace
$$\mt_P: \End_\AA(P) \to \Bbbk\ , \quad P\in\II$$
which is unique up to scalar by \cite[Cor. 5.6]{GKPmtrace}. We assume that a choice of modified trace has been made. It fixes the choice of a morphism $\eta_\unit: \unit\to P_\unit$ such that $\mt_{P_\unit}(\eta_\unit\circ\varepsilon_\unit)=1$. The modified trace being non-degenerate implies that it induces a non-degenerate pairing
$$\mt_P(-\circ-): \Hom_\AA(\unit,P)\otimes \Hom_\AA(P,\unit)\to \Bbbk$$
We denote $$\Omega_P = \sum_i x^i \otimes x_i \in \Hom_\AA(P,\unit) \otimes \Hom_\AA(\unit, P)$$ the associated copairing, i.e. $(x_i)_i$ and $(x^i)_i$ are dual basis with respect to $\mt_P$. We also denote 
$$\Lambda_P := \sum_i x_i\circ x^i \in \End_\AA(P)\ .$$
Let $G \in \II$ be a projective generator of $\AA$, e.g. take $G$ to be the direct sum of all irreducible projectives. By \cite{CGPVchromatic}, or \cite[Thm. 1.10]{CGHP}, there exist a chromatic morphism based at any $P \in \II$ \cite[Sec. 1.3]{CGHP}, i.e. a morphism
$$\chr_P : G\otimes P\to G\otimes P$$
% We will denote $$\chr := (\id_G \otimes \varepsilon_\unit)\circ c_{P_\unit} : G \otimes P_\unit \to G \ .$$
satisfying 
\begin{equation*}
    \begin{tikzpicture}[scale = 0.8, baseline = 15pt]
        \draw (0,-0.2)--++(0,2.4) node[pos = 0.2,left]{$V$} node[pos = 0.8,left]{$V$} node[pos = 0.2,sloped]{$>$} node[pos = 0.8,sloped]{$>$};
        \draw (0.5,1) arc(180:540:1) node[pos = 0.2,below]{$G$} node[pos = 0.8,above]{$G$} node[pos = 0.2,sloped]{$>$} node[pos = 0.8,sloped]{$<$};
        \draw (3,-0.2)--++(0,2.4) node[pos = 0.2,left]{$P$} node[pos = 0.8,left]{$P$} node[pos = 0.2,sloped]{$>$} node[pos = 0.8,sloped]{$>$};
        \node[rectangle, draw=black, fill=white] at (0.25,1) {$\Lambda_{V\otimes G^*}$};
        \node[rectangle, draw=black, fill=white] at (2.75,1) {$\ \chr_P\ $};
    \end{tikzpicture}\quad \ \  = \quad
    \begin{tikzpicture}[scale = 0.8, baseline = 15pt]
        \draw (0,-0.2)--++(0,2.4) node[pos = 0.5,left]{$V$} node[pos = 0.5,sloped]{$>$};
        \draw (1,-0.2)--++(0,2.4) node[pos = 0.5,left]{$P$}node[pos = 0.5,sloped]{$>$};
    \end{tikzpicture} 
\end{equation*} 
for any $V\in \AA$.

We further assume that $\AA$ is \textbf{chromatic non-degenerate} in the sense that the morphism 
\begin{equation*} \Delta^0_{P_\unit} :=
    \begin{tikzpicture}[baseline = 10pt]
        \draw (0,0) ..controls (0,0.5) and (1,0.5).. (1,1);

        \draw (1,-1) -- (1,0)node[pos = 0.5,left]{$P_\unit$}node[pos = 0.5,sloped]{$>$};
        \draw[white, line width = 5pt] (1,0) ..controls (1,0.5) and (0,0.5).. (0,1);
        \draw (1,0) ..controls (1,0.5) and (0,0.5).. (0,1);
        \draw (0,1) ..controls (0,1.5) and (1,1.5).. (1,2);

        \draw[white, line width = 5pt] (1,1) ..controls (1,1.5) and (-1,1.5).. (-1,0.5);
        \draw (1,1) ..controls (1,1.5) and (-1,1.5).. (-1,0.5)node[pos = 0.6,above]{$G$}node[pos = 0.6,sloped]{$<$};
        \draw (-1,0.5) ..controls (-1,-1) and (0,-1).. (0,0);
        \node[rectangle, draw = black, fill=white] at (0.5,0) {$\ \ \ \chr_{P_\unit}\ \ \ $};
    \end{tikzpicture}
\end{equation*}
is non-zero. This implies the existence of a gluing morphism \cite[Def 1.5]{CGHP}
$$\gm: P_\unit \to P_\unit$$
satisfying 
$$\gm \circ \Delta^0_{P_\unit} = \Lambda_{P_\unit}$$

We will sometimes assume that $\AA$ is even \textbf{chromatic compact} in the sense that $\varepsilon_\unit \circ \gm$ is non-zero. This implies that there exists a non-zero global dimension 
$$\zeta \in \Bbbk^\times$$
such that $\zeta^{-1} \id_{P_\unit}$ is a gluing morphism.

\begin{example}
    If $\AA$ is semisimple and $(S_i)_{i=1,\dots,n}$ are its simples, with $S_0 = \unit$, then:
    
    -- $P_\unit = \unit$, 
    
    -- any scalar times the usual categorical trace $\lambda \operatorname{tr}$ is a modified trace, 
    
    -- the copairing is $\Omega_{S_i} = \lambda^{-1} \delta_{i,0} \id\otimes \id$,
    
    -- a projective generator is given by $G=\oplus_i S_i$, 
    
    -- $\chr := \lambda\cdot \oplus_i \operatorname{qdim}(S_i) \id_{S_i}$ is a chromatic morphism based at $P_\unit$, 
    
    -- $\Delta^0_{P_\unit} = \lambda\sum_i \operatorname{qdim}(S_i)^2 \id_\unit$ which is non-zero when $\operatorname{char}\Bbbk=0$ or when $\AA$ is separable
    
    -- in this case, $\gm = \frac{1}{\lambda^2\sum_i \operatorname{qdim}(S_i)^2}\id_\unit$ is a gluing morphism, and

    -- the global dimension is $\zeta = \lambda^2\sum_i \operatorname{qdim}(S_i)^2$.\\
    Note that there exists precisely two values of $\lambda$ for which $\zeta = 1$.
\end{example}

\subsection{The construction}
We give an operation on admissible skein module associated to each 4-dimensional handle attachment, and then check that they define an extended TQFT.
\paragraph{The 4-handle}
We define a natural transformation $$Z_4: \SkFun_\II(S^3) \Rightarrow \SkFun_\II(\emptyset)$$
In this case, the incoming and outgoing boundary of $S^3$ are both the empty surface, and $\SkFun_\II(S^3): \SkCat(\emptyset)\otimes \SkCat_\II(\emptyset)^{op}\to \Vect$ is just the data of one vector space $\Sk_\II(S^3) = \SkFun_\II(S^3,\emptyset,\emptyset)$. The natural transformation $Z_4$ is just the data of a linear map, which we will still denote $Z_4$ by abuse,
$$Z_4: \Sk_\II(S^3)\to \Bbbk$$
We take this map to be the invariant of $\II$-colored admissible ribbon graphs induced by the modified trace \cite{GeerPatureauTuraevModifiedqdim, GeerPatureauTraceProj} as in \cite{CGHP}. If $T \in \Sk_\II(S^3)$ is the closure of a 1-1-tangle $T_{cut}$ whose endpoints are both colored by a projective $P$, we set 
$$Z_4(T) := \mt_P(\operatorname{RT}(T_{cut}))$$
where $\operatorname{RT}$ denote the usual Reshetikhin--Turaev evaluation functor \cite{TuraevBook}. See Figure \ref{fig:4handle}. This is well-defined by the work of Geer--Patureau-Mirand et al. and there is no naturality to check in this case. 
\begin{figure}
    \centering
\begin{tikzpicture}[baseline = 0pt]
\draw[gray] (-1,0.2) arc(180:540:1.5);
\fill[gray!10] (-1,0.2) arc(180:540:1.5);
\node[color = gray] at (1.2,1.2){$S^3$};
\node[rectangle, draw, fill=white] (T) at (0,0) {$T_{cut}$};
\draw (T) ..controls (0,1) and (1,1).. (1,0) .. controls (1,-1) and (0,-1)..(T) node[pos = 0.1, right]{$P$} node [pos = 0.1, sloped]{$<$};
\end{tikzpicture} $\quad \mapsto \quad \mt_P(T_{cut})$
\caption{The 4-handle}
    \label{fig:4handle}
\end{figure}
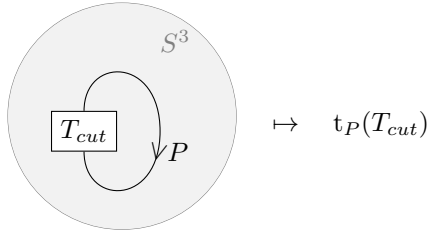

\paragraph{The 3-handle}
We define a natural transformation $$Z_3: \SkFun_\II(S^2\times \D^1) \Rightarrow \SkFun_\II(\D^3 \times S^0)\ .$$
Both 3-cobordisms $S^2\times \D^1$ and $\D^3 \times S^0$ have incoming boundary $S^2 \times S^0$, and outgoing boundary $\emptyset$. Let $X \in \SkCat_\II(S^2\times S^0)$ be an admissible $\II$-labelling, which we may write $X = X_+\sqcup X_-$ as $S^2 \times S^0 = S^2\times \{+\}\sqcup S^2\times \{-\}$, and $T \in \Sk_\II(S^2\times\D^1;X_+\sqcup X_-)$ an $\II$-colored ribbon graph.
We want to ``cut" $T$ in two pieces. 

An object $P \in \II$ induces an $\II$-labeling in $S^2$ with a single point colored by $P$, which by abuse we will still call $P \in \SkCat_\II(S^2)$. As $S^2$ is connected, any object of $\SkCat_\II(S^2)$ is actually isomorphic to an object of this form. If $X$ is any configuration of points, then $P$ is the tensor product of all its colors, with duals for negatively oriented points. Similarly, a morphism $f \in \Hom_\AA(P,P')$ induces a morphism in $\SkCat_\II(S^2)$, and morphisms of this form generate all morphisms.
%We will denote $\SkCat_\II^\ast(S^2)$ the equivalent full sub-category generated by such objects. 

As in the proof of Theorem \ref{thm:SkCategTQFT}, we have an equivalence 
$$\Sk_\II(S^2\times \D^1, X_+\sqcup X_-) \simeq \int^{P \in \SkCat_\II(S^2)} \Sk_\II(S^2 \times [0,1]; X_+,P) \otimes \Sk_\II(S^2 \times [-1,0]; P\sqcup X_-)$$
Let us denote $T_+\otimes T_-$ two skeins in the RHS that glue to $T$. 

Given a morphism $f:P \to \unit$, we get a skein which by abuse we will still denote $f \in \Sk_\II(\D^3, P)$ which has a single vertex colored by $f$ at $0 \in \D^3$ linked by a straight line to $P \in \SkCat_\II(S^2)$. Similarly, for $\Omega \in \Hom_\AA(P,\unit)\otimes \Hom_\AA(\unit, P)$ we get a skein $\Omega \in \Sk_\II(\D^3\times S^0)$.

We set 
$$Z_3(T) := (T_+\sqcup T_-)\cdot \Omega_P$$
where $(T_+\sqcup T_-)\cdot -$ is the action of morphisms in $\SkCat_\II(S^2 \times S^0)$ on $\Sk_\II(D^3\times S^0)$. See Figure \ref{fig:3handle}. It is well-defined, i.e. preserves the coend relation in the coend above, by naturality of $\Omega_P$ \cite[Lem. 1.1]{CGHP}. It is natural as for any morphism $S$ in $\SkCat_\II(S^2\times S^0)$ we have $Z_3(S\cdot T) = S \cdot (T_+\sqcup T_-)\cdot \Omega_P$.

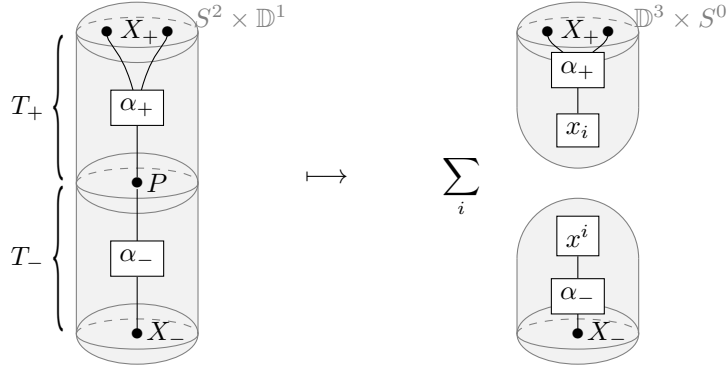
\begin{figure}
    \centering
\begin{tikzpicture}[baseline = 0pt,xscale = 0.8]
\fill[gray!10] (0,-2) rectangle (2,2);
    \fill[gray!10] (0,2) arc(180:540:1 and 0.4);
    \draw[gray] (0,2) arc(180:540:1 and 0.4);
    \draw[gray] (0,2) arc(180:360:1 and 0.2);
    \draw[dashed, gray] (0,2) arc(180:0:1 and 0.2);
    
    \fill[gray!10] (0,-2) arc(180:540:1 and 0.4);
    \draw[gray] (0,-2) arc(180:540:1 and 0.4);
    \draw[gray] (0,-2) arc(180:360:1 and 0.2);
    \draw[dashed, gray] (0,-2) arc(180:0:1 and 0.2);
    
    \draw[gray] (0,0) arc(180:540:1 and 0.4);
    \draw[gray] (0,0) arc(180:360:1 and 0.2);
    \draw[dashed, gray] (0,0) arc(180:0:1 and 0.2);

    \draw[gray] (0,-2)--++(0,4);
    \draw[gray] (2,-2)--++(0,4);
    \node[gray] at (2.7,2.2) {$S^2\times \D^1$};

\node[inner sep = 0pt, outer sep = 0pt] (P) at (1,0) {$\bullet$};
\draw (0.5,2) node{$\bullet$} node[right = 0.05cm]{$X_+$} ..controls (0.5, 1.8) and (0.8,1.8).. (1, 1) -- (P) node[right]{$P$};
\draw (1.5,2) node{$\bullet$} ..controls (1.5, 1.8) and (1.2,1.8).. (1, 1);
\node[rectangle, draw, fill=white] at (1,1){$\alpha_+$};

\draw (1,-2) node{$\bullet$} node[right = 0cm]{$X_-$} -- (1, -1) -- (P);
\node[rectangle, draw, fill=white] at (1,-1){$\alpha_-$};

\node[yscale = 5.5] at (-0.3,1) {$\{$};
\node[yscale = 5.5] at (-0.3,-1) {$\{$};
\node at (-0.8,1) {$T_+$};
\node at (-0.8,-1) {$T_-$};
\end{tikzpicture}
$\longmapsto$ \hspace{1cm} $\displaystyle \sum_i \quad$
\begin{tikzpicture}[baseline = 0pt,xscale = 0.8]
\fill[gray!10] (0,1) arc(180:360:1 and 0.8) -- (2,2) -- (0,2) -- (0,1);
\fill[gray!10] (0,-1) arc(180:0:1 and 0.8) -- (2,-2) -- (0,-2) -- (0,-1);
    \fill[gray!10] (0,2) arc(180:540:1 and 0.4);
    \draw[gray] (0,2) arc(180:540:1 and 0.4);
    \draw[gray] (0,2) arc(180:360:1 and 0.2);
    \draw[dashed, gray] (0,2) arc(180:0:1 and 0.2);
    
    \fill[gray!10] (0,-2) arc(180:540:1 and 0.4);
    \draw[gray] (0,-2) arc(180:540:1 and 0.4);
    \draw[gray] (0,-2) arc(180:360:1 and 0.2);
    \draw[dashed, gray] (0,-2) arc(180:0:1 and 0.2);
    
    \draw[gray] (0,1) arc(180:360:1 and 0.8);
    \draw[gray] (0,-1) arc(180:0:1 and 0.8);

    \draw[gray] (0,1)--++(0,1);
    \draw[gray] (0,-2)--++(0,1);
    \draw[gray] (2,1)--++(0,1);
    \draw[gray] (2,-2)--++(0,1);
    \node[gray] at (2.7,2.2) {$\D^3\times S^0$};

\node[rectangle, draw, fill=white] (Up) at (1,0.7) {$x_i$};
\node[rectangle, draw, fill=white] (Do) at (1,-0.7) {$x^i$};
\draw (0.5,2) node{$\bullet$} node[right = 0.05cm]{$X_+$} ..controls (0.5, 1.8) and (0.8,1.8).. (1, 1.5) -- (Up);
\draw (1.5,2) node{$\bullet$} ..controls (1.5, 1.8) and (1.2,1.8).. (1, 1.5);
\node[rectangle, draw, fill=white] at (1,1.5){$\alpha_+$};

\draw (1,-2) node{$\bullet$} node[right = 0cm]{$X_-$} -- (1, -1.5) -- (Do);
\node[rectangle, draw, fill=white] at (1,-1.5){$\alpha_-$};
\end{tikzpicture}
\caption{The 3-handle. Here $\Omega_P = \sum_i x^i \otimes x_i$ is the copairing.}
    \label{fig:3handle}
\end{figure}

% We may find an isotopy representative $\oT$ of $T$ that intersects $\S^2 \times \{0\}$ transversely and with non-empty intersection. Any two such isotopy representatives of $T$ are actually isotopic among ribbon graphs satisfying this property by \cite[Lem. 2.11]{BHskcat}. As $S^2$ is connected, one may fuse all the strands and assume that $\oT$ intersects $S^2$ at a single point colored by some projective $P$. We write $\oT_+ = \oT\cap (S^2\times [0,1])$ and $\oT_- = \oT\cap (S^2\times [-1,0])$, which give a morphism $\oT_+\sqcup\oT_-$ in $\SkCat_\II(S^2\times S^0)$ from $X$ to .

\paragraph{The 2-handle}
We define a natural transformation $$Z_2: \SkFun_\II(S^1\times \D^2) \Rightarrow \SkFun_\II(\D^2 \times S^1)\ .$$
Both 3-cobordisms $S^1\times \D^2$ and $\D^2 \times S^1$ have incoming boundary $S^1 \times S^1$, and outgoing boundary $\emptyset$. Let $X \in \SkCat_\II(S^1\times S^1)$ be an admissible $\II$-labelling and $T \in \Sk_\II(S^1\times\D^2;X)$.

We may isotope $T$ so that it does not intersect the core $S^1\times\{0\}$, so we can think of the pushed $T$ as a skein $T_{pushed} \in \Sk_\II(S^1\times S^1\times I; X, \emptyset)$. Note that this skein is not canonically defined and depends on how we pushed $T$. We will incautiously write $T=T_{pushed}\cdot \emptyset$, even though $\emptyset$ is not an admissible skein in $S^1\times \D^2$, and in particular $Z_2(\emptyset)$ need not be defined.
% As $T$ is admissible, we may introduce a $P_\unit$-colored strand ending on a vertex colored by $\varepsilon_\unit$ as in \ref{Subs_HypothCat} and isotope this vertex to lay in the core. We obtain a representative $\oT$ of $T$ that intersects the core transversely only at one vertex $v$ colored by $\varepsilon_\unit$.

% We have a preferred diffeomorphism $\begin{array}{rcl}
% S^1\times (\D^2\smallsetminus\{0\}) &\simeq &  (\D^2\smallsetminus\{0\})\times S^1\\
% (x, y)&\mapsto& (||y||\cdot x,\frac{y}{||y||})
% \end{array}$ and by abuse we will still denote $\oT\smallsetminus v \subseteq \D^2\times S^1$ the transported graph. Note that its closure intersects the cocore $\{0\}\times S^1$ once, at a point $v'$.

% We set
% $$Z_2(T) := (\oT\smallsetminus v) \cup (\{0\}\times S^1)$$
% where the cocore $\{0\}\times S^1$ is though of as an edge from $v'$ to itself and is colored by $G$ and the vertex $v'$ is colored by $\chr$.
We set
$$Z_2(T) := T_{pushed} \cdot (\{0\}\times S^1)_{red}$$
where the cocore $\{0\}\times S^1$ is colored in red. The color "red" is not an object of our category, and this does not a priori define a skein. It is interpreted as a skein using the red-to-blue operation of \cite{CGHP}, i.e. using a chromatic morphism:
\begin{equation}\label{eq:redToBlue}
    \begin{tikzpicture}[baseline = 5pt]
        \draw[red] (0,0) --++(0,1);
        \draw[red, dashed] (0,1)..controls (0,2) and (-1,2).. (-1,1) node[pos = 0.4, sloped, above=-2pt]{\footnotesize red}--(-1,0)..controls (-1,-1) and (0,-1)..(0,0);
        \draw (0.5,-1)--++(0,3) node[pos = 0.3, right]{$P$} node[pos = 0.3, sloped]{$>$};
    \end{tikzpicture} \ := \quad
        \begin{tikzpicture}[baseline = 5pt]
        \draw (0,0) --++(0,1);
        \draw[dashed] (0,1)..controls (0,2) and (-1,2).. (-1,1) node[pos = 0.3, right]{$G$}node[pos = 0.3, sloped]{$<$}--(-1,0)..controls (-1,-1) and (0,-1)..(0,0);
        \draw (0.5,-1)--++(0,3) node[pos = 0.3, right]{$P$} node[pos = 0.3, sloped]{$>$};
        \node[rectangle, draw, fill=white] at (0.25,0.5){$\ \chr_P\ $};
    \end{tikzpicture}
\end{equation}
Note that this is only defined in the presence of a projective object. In other words, $Z_2(T) = T_{pushed} \cdot (\{0\}\times S^1)_{red}$ can be turned into a skein, but $(\{0\}\times S^1)_{red}$ alone cannot. 

This operation does not depend on how we isotoped $T$ to be disjoint from the core by \cite[Lem. 2.4]{CGHP} nor on how we applied the red-to-blue operation or on the choice of a chromatic morphism by \cite[Lem. 2.3]{CGHP}. Both of these operations can be chosen to leave $T$ unchanged near the boundary, which shows naturality of $Z_2$.

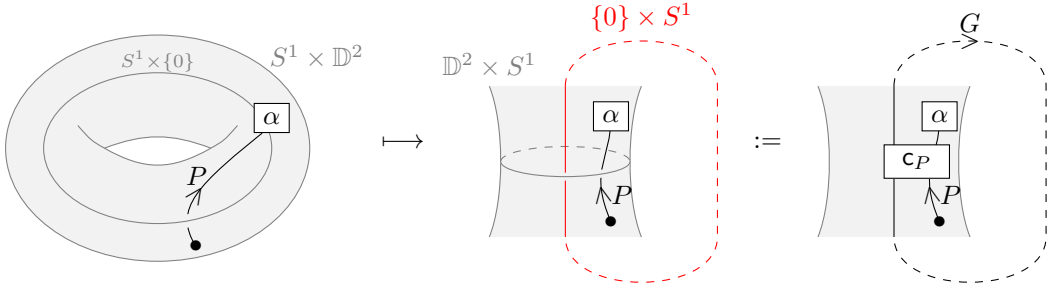
\begin{figure}
    \centering
\begin{tikzpicture}[baseline = 0pt]
    \fill[gray!10] (0,0) arc(180:540:2 and 1.5);
    \draw (2.5,-1.3) node{$\bullet$}.. controls (2.2,-0.8) and (2.5, -0.5).. (3.5,0.3) node[pos = 0.5, sloped]{$>$}node[pos = 0.5, above]{$P$};
    \draw[gray] (0,0) arc(180:540:2 and 1.5) node[pos = 0.65, right=5pt]{$S^1\times \D^2$};
    \draw[line width = 5pt, gray!10] (0.5,0) arc(180:540:1.5 and 1);
    \draw[gray] (0.5,0) arc(180:540:1.5 and 1) node[pos = 0.75, above=-3pt]{$\scriptstyle S^1\times \{0\}$};
    \node[rectangle, draw, fill=white] at (3.5,0.4){$\alpha$};
    \begin{scope}[xshift = 0.6cm,xscale = 0.7]
    \fill[white] (1,0).. controls (1.75, 0.2) and (2.25, 0.2).. (3,0).. controls (2.25, -0.3) and (1.75, -0.3)..(1,0);
    \draw[gray] (0.5, 0.3) .. controls (0.75,0.1) ..(1,0).. controls (1.75, -0.3) and (2.25, -0.3).. (3,0).. controls (3.25,0.1).. (3.5,0.3);
    \draw[gray] (1,0).. controls (1.75, 0.2) and (2.25, 0.2).. (3,0);
    \end{scope}
\end{tikzpicture} $\longmapsto$
\begin{tikzpicture}[baseline = 5pt]
\fill[gray!10] (-1,-1)..controls (-0.8,-0.5) and (-0.8,0.5).. (-1,1) -- (1,1) ..controls (0.8,0.5) and (0.8,-0.5).. (1,-1) -- (-1,-1);
    \draw[gray] (-1,-1)..controls (-0.8,-0.5) and (-0.8,0.5).. (-1,1) node[above]{$\D^2\times S^1$};
    \draw[gray] (1,-1)..controls (0.8,-0.5) and (0.8,0.5).. (1,1);

    \draw[dashed, gray] (-0.85,0) arc(180:0:0.85 and 0.2);
    
    \draw[red] (0,-1)--(0,1);
    \draw[red, dashed] (0,1).. controls (0,1.8) and (2,1.8).. (2,1) node[pos = 0.5,above]{$\{0\}\times S^1$} --(2,-1)..controls (2,-1.8) and (0,-1.8)..(0,-1);

    \draw (0.6,-0.8) node{$\bullet$}.. controls (0.3,-0.1) and (0.6, 0).. (0.6,0.6) node[pos = 0.2, sloped]{$<$}node[pos = 0.2, right]{$P$};
    \node[rectangle, draw, fill=white] at (0.6,0.6){$\alpha$};

    \draw[line width = 3pt, gray!10] (-0.8,0) arc(180:360:0.8 and 0.2);
    \draw[gray] (-0.85,0) arc(180:360:0.85 and 0.2);
\end{tikzpicture}
\quad := \quad
\begin{tikzpicture}[baseline = 5pt]
\fill[gray!10] (-1,-1)..controls (-0.8,-0.5) and (-0.8,0.5).. (-1,1) -- (1,1) ..controls (0.8,0.5) and (0.8,-0.5).. (1,-1) -- (-1,-1);
    \draw[gray] (-1,-1)..controls (-0.8,-0.5) and (-0.8,0.5).. (-1,1);
    \draw[gray] (1,-1)..controls (0.8,-0.5) and (0.8,0.5).. (1,1);
    
    \draw (0,-1)--(0,1);
    \draw[dashed] (0,1).. controls (0,1.8) and (2,1.8).. (2,1) node[pos = 0.5,above]{$G$}node[pos = 0.5]{$>$} --(2,-1)..controls (2,-1.8) and (0,-1.8)..(0,-1);

    \draw (0.6,-0.8) node{$\bullet$}.. controls (0.3,-0.1) and (0.6, 0).. (0.6,0.6) node[pos = 0.2, sloped]{$<$}node[pos = 0.2, right]{$P$};
    \node[rectangle, draw, fill=white] at (0.6,0.6){$\alpha$};
    \node[rectangle, draw, fill=white] at (0.3,0){$\ \chr_P\ $};
\end{tikzpicture}
\begin{tikzpicture}
    
\end{tikzpicture}
\caption{The 2-handle. For depiction purposes we have represented $S^1\times \D^2$ embedded in $S^3$, and $\D^2 \times S^1$ as its complement. Note however that the orientations don't quite match, as they both should be on the same side of $S^1 \times S^1$.}
    \label{fig:2handle}
\end{figure}

\paragraph{The 1-handle}
We define a natural transformation $$Z_1: \SkFun_\II(S^0\times \D^3) \Rightarrow \SkFun_\II(\D^1 \times S^2)\ .$$
Both 3-cobordisms $S^0\times \D^3$ and $\D^1 \times S^2$ have incoming boundary $S^0 \times S^2$, and outgoing boundary $\emptyset$. Let $X= X_+\sqcup X_- \in \SkCat_\II(S^0\times S^2)$ be an admissible $\II$-labelling and $T = T_+\sqcup T_- \in \Sk_\II(S^0\times\D^3;X_+\sqcup X_-)$.

As we have seen with the 3-handle, the copairing $\Omega_{P_\unit} = \varepsilon_\unit \otimes \eta_\unit$ can be seen as a skein in $\Sk_\II(S^0\times \D^3; P_\unit\sqcup -P_\unit)$. We may isotope $T$ to be disjoint from $S^0 \times \frac{1}{2}\D^3$ and introduce two $P_\unit$-strands in $T_+$ and $T_-$ ending by a vertex $v_\pm$ colored by $\varepsilon_\unit$ or $\eta_\unit = \varepsilon_\unit^*$ which we may isotope to be at $0\in\frac{1}{2}\D^3$. In other words, we can write $T = \widetilde T \cdot \Omega_{P_\unit}$. 
% Using the isomorphism $P_\unit \simeq P_\unit^*$ mapping $\varepsilon_\unit$ to $\eta_\unit$, 

We may think of a gluing morphism $\gm: P_\unit\to P_\unit$ as a skein $\gm \in \Sk_\II(\D^1 \times S^2; P_\unit \sqcup -P_\unit)$.

We set
$$Z_1(T) = \widetilde T \cdot \gm$$
See Figure \ref{fig:1handle}.
This is well-defined and does not depend on the choice of a gluing morphism or on how we introduced $P_\unit$-strands by \cite[Proposition 5.1]{CGHP}. Again naturality is clear as we may leave $T$ unchanged near the boundary.

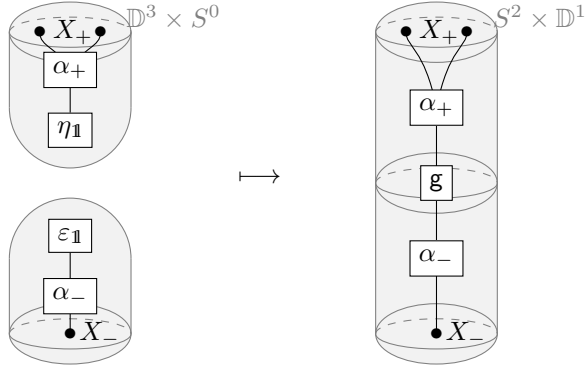
\begin{figure}
    \centering
\begin{tikzpicture}[baseline = 0pt,xscale = 0.8]
\fill[gray!10] (0,1) arc(180:360:1 and 0.8) -- (2,2) -- (0,2) -- (0,1);
\fill[gray!10] (0,-1) arc(180:0:1 and 0.8) -- (2,-2) -- (0,-2) -- (0,-1);
    \fill[gray!10] (0,2) arc(180:540:1 and 0.4);
    \draw[gray] (0,2) arc(180:540:1 and 0.4);
    \draw[gray] (0,2) arc(180:360:1 and 0.2);
    \draw[dashed, gray] (0,2) arc(180:0:1 and 0.2);
    
    \fill[gray!10] (0,-2) arc(180:540:1 and 0.4);
    \draw[gray] (0,-2) arc(180:540:1 and 0.4);
    \draw[gray] (0,-2) arc(180:360:1 and 0.2);
    \draw[dashed, gray] (0,-2) arc(180:0:1 and 0.2);
    
    \draw[gray] (0,1) arc(180:360:1 and 0.8);
    \draw[gray] (0,-1) arc(180:0:1 and 0.8);

    \draw[gray] (0,1)--++(0,1);
    \draw[gray] (0,-2)--++(0,1);
    \draw[gray] (2,1)--++(0,1);
    \draw[gray] (2,-2)--++(0,1);
    \node[gray] at (2.7,2.2) {$\D^3\times S^0$};

\node[rectangle, draw, fill=white] (Up) at (1,0.7) {$\eta_\unit$};
\node[rectangle, draw, fill=white] (Do) at (1,-0.7) {$\varepsilon_\unit$};
\draw (0.5,2) node{$\bullet$} node[right = 0.05cm]{$X_+$} ..controls (0.5, 1.8) and (0.8,1.8).. (1, 1.5) -- (Up);
\draw (1.5,2) node{$\bullet$} ..controls (1.5, 1.8) and (1.2,1.8).. (1, 1.5);
\node[rectangle, draw, fill=white] at (1,1.5){$\alpha_+$};

\draw (1,-2) node{$\bullet$} node[right = 0cm]{$X_-$} -- (1, -1.5) -- (Do);
\node[rectangle, draw, fill=white] at (1,-1.5){$\alpha_-$};
\end{tikzpicture}
$\longmapsto$ \hspace{1cm}
\begin{tikzpicture}[baseline = 0pt,xscale = 0.8]
\fill[gray!10] (0,-2) rectangle (2,2);
    \fill[gray!10] (0,2) arc(180:540:1 and 0.4);
    \draw[gray] (0,2) arc(180:540:1 and 0.4);
    \draw[gray] (0,2) arc(180:360:1 and 0.2);
    \draw[dashed, gray] (0,2) arc(180:0:1 and 0.2);
    
    \fill[gray!10] (0,-2) arc(180:540:1 and 0.4);
    \draw[gray] (0,-2) arc(180:540:1 and 0.4);
    \draw[gray] (0,-2) arc(180:360:1 and 0.2);
    \draw[dashed, gray] (0,-2) arc(180:0:1 and 0.2);
    
    \draw[gray] (0,0) arc(180:540:1 and 0.4);
    \draw[gray] (0,0) arc(180:360:1 and 0.2);
    \draw[dashed, gray] (0,0) arc(180:0:1 and 0.2);

    \draw[gray] (0,-2)--++(0,4);
    \draw[gray] (2,-2)--++(0,4);
    \node[gray] at (2.7,2.2) {$S^2\times \D^1$};

\node[inner sep = 0pt, outer sep = 0pt] (P) at (1,0) {};
\draw (0.5,2) node{$\bullet$} node[right = 0.05cm]{$X_+$} ..controls (0.5, 1.8) and (0.8,1.8).. (1, 1) -- (P);
\draw (1.5,2) node{$\bullet$} ..controls (1.5, 1.8) and (1.2,1.8).. (1, 1);
\node[rectangle, draw, fill=white] at (1,1){$\alpha_+$};

\draw (1,-2) node{$\bullet$} node[right = 0cm]{$X_-$} -- (1, -1) -- (P)  node[rectangle, draw, fill=white]{$\gm$};
\node[rectangle, draw, fill=white] at (1,-1){$\alpha_-$};
\end{tikzpicture}
\caption{The 1-handle.}
    \label{fig:1handle}
\end{figure}

\paragraph{The 0-handle} In the case where $\AA$ is chromatic compact, i.e. there exists $\zeta \in \Bbbk^\times$ the global dimension satisfying $\varepsilon_\unit \circ \gm =\zeta^{-1} \varepsilon_\unit$, we set 
$$Z_0: \SkFun_\II(\emptyset) \Rightarrow \SkFun_\II(S^3)$$
by mapping $1 \in \Bbbk$ to $\zeta \Gamma_0 \in \Sk_\II(S^3)$ where $\Gamma_0$ is the admissible skein in $S^3$ with a single strand colored by $P_\unit$ and two coupons $\varepsilon_\unit$ and $\eta_\unit$. See Figure \ref{fig:0handle}.

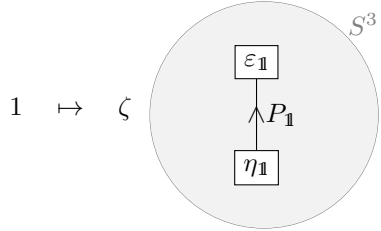
\begin{figure}
    \centering
    $1 \quad \mapsto \quad \zeta\ $
\begin{tikzpicture}[baseline = 0pt]
\draw[gray] (-1.4,0) arc(180:540:1.5);
\fill[gray!10] (-1.4,0) arc(180:540:1.5);
\node[color = gray] at (1.4,1.2){$S^3$};
\draw (0,-0.7)node[rectangle, draw, fill=white]{$\eta_\unit$} --(0,0.7)node[pos = 0.5, right]{$P_\unit$} node [pos = 0.5, sloped]{$>$} node[rectangle, draw, fill=white]{$\varepsilon_\unit$};
\end{tikzpicture} 
\caption{The 0-handle}
    \label{fig:0handle}
\end{figure}

\begin{theorem}\label{thm:ExtendedCGHP}
    Let $\AA$ be a chromatic non-degenerate category, $\II = \operatorname{Proj}(\AA)$ and $\mt$ a modified trace. Then there exists a unique non-compact (2+1+1)-TQFT $$\SS_\AA: \Cob_{2+1+1}^{nc}\to \Bimod^{hop}$$
    extending $\SkFun_\II$ and given by $Z_4,\dots, Z_1$ on the standard handle attachments of index $4,\dots, 1$.

    If $\AA$ is moreover chromatic compact, then $\SS_\AA$ extends in a unique way to a fully defined $(2+1+1)$-TQFT $$\SS_\AA: \Cob_{2+1+1}\to \Bimod^{hop}$$ given by $Z_0$ on the 0-handle. 
\end{theorem}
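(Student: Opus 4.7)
The strategy is to apply the main result of \cite{HaiounHandle}, which reduces the construction of a (non-compact) once-extended $4$-TQFT to (i) a categorified $3$-TQFT $\SkFun_\II: \Cob_{2+1+\varepsilon} \to \Bimod^{hop}$, (ii) natural transformations $Z_k$ between the values on the two sides of each standard $k$-handle attachment for $k=1,\dots,4$ (adding $k=0$ in the compact case), and (iii) the verification that the $Z_k$ satisfy handle cancellations (for each adjacent pair $(k,k+1)$) and invariance under reversal of attaching spheres. Ingredient (i) is supplied by Theorem \ref{thm:SkCategTQFT}, and the $Z_k$ needed for (ii) have been defined in the preceding subsection. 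So the content of the proof is to verify (iii), and to observe that once this is done, the resulting TQFT is unique since the $Z_k$ determine the 2-functor on a generating set of $2$-morphisms.

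The first step is to check that each $Z_k$ as written is indeed a well-defined natural transformation between the relevant profunctors. For $Z_4$ this is the fact that the renormalized RT invariant from $\mt$ is well-defined on admissible closed ribbon graphs; for $Z_3$, well-definedness on the coend follows from naturality of the copairing $\Omega_P$ \cite[Lem.~1.1]{CGHP}; for $Z_2$, independence of the choice of push-off and of the choice of chromatic morphism is \cite[Lem.~2.3, Lem.~2.4]{CGHP}; for $Z_1$, independence of the choice of gluing morphism and of the introduction of $P_\unit$-strands is \cite[Prop.~5.1]{CGHP}. Naturality in the boundary labellings is clear in each case because the modification to the skein happens in the interior of the $3$-manifold and all operations can be performed leaving a collar of the boundary untouched.

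The main step is to verify handle cancellation ($0$-$1$, $1$-$2$, $2$-$3$, $3$-$4$) and invariance under reversal of attaching spheres. These identities are precisely the computations that underlie the construction of the non-extended invariant in \cite{CGHP}. The translation to the once-extended setting is essentially formal: each cancellation or reversal takes place inside a ball (or a collar neighborhood of the attaching sphere) that may be assumed disjoint from the boundary of the ambient $3$-cobordism, so the identity to be checked is a local identity between skein elements with a fixed collar of boundary labels. Concretely, $3$-$4$ cancellation reduces to the identity $\mt_P(\alpha\circ\beta)=\mt_P(\alpha\circ x_i)\mt_P(x^i\circ\beta)$ arising from the copairing $\Omega_P$; $2$-$3$ cancellation reduces to the defining property of the chromatic morphism (a red circle through a blue arc contracts the arc, using $\Lambda_P$ and the trace); $1$-$2$ cancellation reduces to the identity $\gm\circ \Delta^0_{P_\unit}=\Lambda_{P_\unit}$ defining $\gm$; and in the compact case $0$-$1$ cancellation reduces to the normalization $\varepsilon_\unit\circ \gm = \zeta^{-1}\varepsilon_\unit$ multiplied by the scalar $\zeta$ in the definition of $Z_0$. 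Invariance under reversal of the attaching sphere for $Z_3$ follows from the symmetry of the modified trace pairing, for $Z_2$ from the fact that the chromatic morphism evaluates either orientation of the red circle to the same skein, for $Z_1$ from self-duality of $P_\unit$ (unimodularity) together with the choice of $\eta_\unit$ dual to $\varepsilon_\unit$, and for $Z_4,Z_0$ the attaching sphere has no nontrivial reversal.

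The main obstacle is the $1$-$2$ cancellation, which is the most delicate: it combines the push-off used to define $Z_2$ with the introduction of a $P_\unit$-strand used to define $Z_1$, and after applying \eqref{eq:redToBlue} and sliding the chromatic morphism one obtains precisely the pattern $\gm\circ \Delta^0_{P_\unit}$ acting on a $P_\unit$-strand, which collapses to $\Lambda_{P_\unit}$ and then to the identity skein by definition of the copairing. Once these four local identities and their reversals are verified, \cite{HaiounHandle} assembles them into the desired symmetric monoidal $2$-functor, uniquely extending $\SkFun_\II$, yielding $\SS_\AA$ on $\Cob_{2+1+1}^{nc}$ in general and on all of $\Cob_{2+1+1}$ when $\AA$ is chromatic compact.
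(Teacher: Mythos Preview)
Your proposal is correct and follows essentially the same approach as the paper: apply the main theorem of \cite{HaiounHandle} and verify handle cancellations and reversal invariance by reducing to the algebraic identities established in \cite{CGHP}. The paper's own proof is terser---it simply cites \cite[Thm.~5.4]{CGHP} for the verifications and points to the same definitional sources you identify (copairing for $3$--$4$, chromatic morphism for $2$--$3$, gluing morphism for $1$--$2$, global dimension for $0$--$1$), and to \cite[Lem.~1.1(1), Lem.~2.3, Prop.~5.1]{CGHP} for reversal invariance of the $3$-, $2$-, and $1$-handles respectively---but your more explicit unpacking of each step is accurate and matches the paper's outline.
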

\begin{proof}
    We apply \cite[Thm. 3.8]{HaiounHandle}. The fact that our assignment on handles satisfies handle cancellations and invariance under reversal of the attaching spheres is checked in \cite[Thm 5.4]{CGHP}. The cancellation of handles actually follows from the definitions, of the copairing for the 3-4-cancellation, of the chromatic morphism for the 2-3-cancellation, of the gluing morphism for the 1-2-cancellation and of the global dimension for the 0-1-cancellation. Invariance under reversal of the attaching sphere is checked in \cite[Lem. 1.1 (1)]{CGHP} for the 3-handle, in \cite[Lem. 2.3]{CGHP} for the 2-handle and in \cite[Prop. 5.1]{CGHP} for the 1-handle.
\end{proof}

\subsection{Properties}
For the applications in Section \ref{Sec_WRT} we will be interested in the special case when $\AA$ is modular in the sense of \cite{Lyub3mfldProjMCG, DGGPR}. In this case, the TQFT constructed above is very simple in dimension 4, and almost all of its data is located "at the boundary". 

\begin{proposition}\label{Prop_ZdepOnSignature}
    Suppose $\AA$ is a possibly non-semisimple modular tensor category. Then:
    
    $\AA$ is chromatic compact and the TQFT $\SS_\AA$ constructed above is invertible for any choice of non-degenerate modified trace $\mt$. 

    The red-to-blue operation used in our construction agrees with the red-to-blue operation of \cite{DGGPR}.

    There are exactly two choices of modified trace for which $\SS_\AA(S^4)=1$. If we use one of these, the natural transformations $\SS_\AA(W): \Sk_\II(M_-)\to \Sk_\II(M_+)$, for fixed $M_-, M_+$, depend only on the signature of $W$. Moreover, if $M$ is a closed 3-manifold, $T \subseteq M$ is admissible and $W:M\to \emptyset$ is a bounding 4-manifold, we have $$\SS_\AA(W)(T) = \DGGPR_\AA(M, T, \sigma(W))\ .$$ 
\end{proposition}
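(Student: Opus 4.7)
The plan is to treat the five assertions in order. \emph{Chromatic compactness.} Modularity in the sense of \cite{Lyub3mfldProjMCG, DGGPR} is equivalent to non-degeneracy of the canonical Hopf pairing on the coend, which by the analysis in \cite{CGHP} implies both chromatic non-degeneracy and $\varepsilon_\unit \circ \gm \neq 0$. \emph{Invertibility of $\SS_\AA$.} One shows that $\SkCat_\II(\Sigma)$ is invertible in $\Bimod$ for every closed $\Sigma$: for $\Sigma = S^2$ this is a direct computation from modularity, and the case of general $\Sigma$ reduces to it via the excision property of skein categories established in \cite{BHskcat}. Invertibility of the values on 3- and 4-cobordisms then follows from invertibility on objects and the gluing axiom. \emph{Red-to-blue.} The definition \eqref{eq:redToBlue}, expressing a red loop as a chromatic morphism applied to a projective generator, is precisely the convention used in \cite{DGGPR}.

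\emph{Normalization on $S^4$.} From the handle decomposition $S^4 = h_0 \cup h_4$,
\begin{equation*}
\SS_\AA(S^4) \;=\; Z_4(Z_0(1)) \;=\; \zeta \cdot Z_4(\Gamma_0) \;=\; \zeta \cdot \mt_{P_\unit}(\eta_\unit \circ \varepsilon_\unit) \;=\; \zeta .
\end{equation*}
Tracking the scalings under $\mt \mapsto \lambda\mt$: the copairing $\Omega_P$ scales by $\lambda^{-1}$, so $\Lambda_P$ by $\lambda^{-1}$; the defining equation for $\chr$ forces $\chr$ to scale by $\lambda$, so $\Delta^0_{P_\unit}$ by $\lambda$; the equation $\gm \circ \Delta^0_{P_\unit} = \Lambda_{P_\unit}$ then forces $\gm$ to scale by $\lambda^{-2}$, hence $\zeta$ scales by $\lambda^2$. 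Therefore $\SS_\AA(S^4) = 1$ has exactly two solutions, namely $\mt/\DD$ for the two square roots $\DD$ of the original $\zeta$.

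\emph{Signature dependence.} Fix $\mt$ with $\zeta = 1$. By invertibility, the assignment $C \mapsto \SS_\AA(C)$ on closed 4-manifolds is a symmetric monoidal homomorphism, hence factors through $\Omega^{SO}_4 \cong \mathbb{Z}$ generated by $\mathbb{CP}^2$ with $\sigma(\mathbb{CP}^2) = 1$, giving $\SS_\AA(C) = \SS_\AA(\mathbb{CP}^2)^{\sigma(C)}$. For $W, W' : M_- \to M_+$ with $\sigma(W) = \sigma(W')$, the cobordism $W \cup_{M_+} \overline{W'} : M_- \to M_-$ has signature zero by Novikov additivity; by invertibility of the relevant natural transformations, the discrepancy $\SS_\AA(W) \circ \SS_\AA(W')^{-1}$ is multiplication by the scalar associated to this signature-zero closed-up piece, which equals $1$ by the previous paragraph. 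Hence $\SS_\AA(W) = \SS_\AA(W')$.

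\emph{Comparison with $\DGGPR$.} For $W : M \to \emptyset$ with admissible $T \subseteq M$, choose a handle decomposition of $W$ using only 0-, 2- and 4-handles (available by standard Kirby calculus since $\partial W \neq \emptyset$). Applying $Z_0$, iterated $Z_2$, then $Z_4$ in succession produces a skein in $S^3$ consisting of $T$ together with Kirby-colored circles along the surgery link of $W$, evaluated with the modified trace; this is exactly the surgery formula defining $\DGGPR_\AA(M, T)$. The signature correction $\SS_\AA(\mathbb{CP}^2)^{\sigma(W)}$ from the previous step matches the normalization factor used in \cite{DGGPR}. The hardest step is this last identification: aligning precisely the red/blue conventions, coupon placements, and signature-correction factors of \cite{DGGPR} with our handle-attachment formulas. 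This reduces to verifying the base case $\SS_\AA(\mathbb{CP}^2)$ via the $+1$-framed unknot surgery, together with an inductive comparison on the number of 2-handles in the chosen decomposition.
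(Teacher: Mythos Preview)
Your treatment of chromatic compactness, the red-to-blue comparison, and the scaling computation for $\SS_\AA(S^4)$ is fine and aligns with the paper (which mostly defers to \cite{CGHP}). Your route to invertibility differs from the paper's---you sketch a direct argument via excision for skein categories, whereas the paper cites \cite[Thm.~5.8]{CGHP} for the $(3{+}1)$-part and then \cite{SPtorusInvertibility} to promote this to the once-extended theory---but either route is reasonable once filled in.

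The real gap is in the signature-dependence step. You write that invertibility makes $C\mapsto\SS_\AA(C)$ on closed $4$-manifolds ``a symmetric monoidal homomorphism, hence factors through $\Omega^{SO}_4$''. The first clause is true; the second does not follow. An invertible $(3{+}1)$-TQFT yields a multiplicative \emph{diffeomorphism} invariant, not a priori a \emph{bordism} invariant: the Euler-characteristic theory $W\mapsto\lambda^{\chi(W)}$ is invertible but does not factor through $\Omega^{SO}_4$ for generic $\lambda$. Your normalization $\zeta=1$ forces $\lambda^2=1$ in that particular family, and then the congruence $\chi\equiv\sigma\bmod 2$ happens to rescue you there, but you have not argued that $\SS_\AA$ lies in any such family, nor cited a classification (e.g.\ an $SKK_4$-type statement) guaranteeing that the partition function depends only on $(\chi,\sigma)$. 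As written you are assuming the conclusion. Your Novikov-additivity paragraph then inherits the same gap: the ``signature-zero closed-up piece'' need not have invariant $1$ unless you already know the invariant is a function of signature.

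The paper instead argues directly and uses modularity beyond mere invertibility. After reducing to closed $W$ (via invertibility) and to connected $W$ (using $\gm=\id$), it invokes the modular identity $\Delta^0_P=\Lambda_P$ to trade every $1$-handle for a $2$-handle (Akbulut dotted circle $\to$ plain circle), thereby reducing to \emph{simply connected} $W$. For simply connected closed $4$-manifolds one has the $\mathbb{CP}^2$-stable diffeomorphism classification by $(\sigma,\chi)$; since $\SS_\AA(\mathbb{CP}^2\#\overline{\mathbb{CP}}^2)=\Delta_+\Delta_-=1$, the $\chi$-dependence drops out and only $\sigma$ survives. This reduction-to-simply-connected step is where the topology and the modularity hypothesis are actually spent, and it is absent from your argument.
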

\begin{proof}
    The fact that modular implies chromatic compact is immediate from \cite[Def. 1.7]{CGHP}. The invertibility statement is \cite[Thm. 5.8]{CGHP} for the (3+1)-part, and follows from \cite{SPtorusInvertibility} for the whole theory. The fact that the red-to-blue operations coincide is explained in \cite[Thm. 1.10, Eq. (8)]{CGHP}.
    
    Choose a modified trace $\mt$. By \cite[Prop 5.7]{CGHP}, the (3+1)-TQFT obtained from $\kappa \mt$ differs from the one obtained from $\mt$ by an Euler characteristic term. As $\chi(S^4)=2$, there are exactly two choices for $\kappa$ such that $\SS_{\AA, \kappa \mt}(S^4)=1$, namely $\kappa = \pm\DD^{-1}$ for $\DD$ a square root of the global dimension $\zeta = \SS_{\AA,\mt}(S^4)$. 

    We now assume we have chosen one of the two modified traces above, so $\zeta = \Delta_+\Delta_- = 1$.
    As every cobordism act by isomorphisms, we can pre-compose and side compose $W$ by bounding manifolds without loosing information and it is equivalent to check the dependence on $W$ for $W$ closed. As the signature is additive when gluing on closed boundary components, the signature of the closed up manifold is the sum of signature of the initial manifold and the manifold we closed it up with.

As $\gm = \id$, the action of a disjoint union of two 4-handles is the same as that of first using a 3-handle to connect the two balls and then using only one 4-handle. This allows us to further reduce to the case where $W$ is connected.
    
    As observed in \cite[Def. 1.7]{CGHP}, or \cite[Lem. 4.4]{DGGPR}, as $\AA$ is modular we have
    $$\Delta^0_P = \Lambda_P,\quad P \in \II$$ 
    where the endomorphism $\Delta^0_P$ is obtained by encircling a $P$ strand by a red circle. Reading the handles backwards, and using Akbulut's dotted circle convention, this is saying that one may replace a dotted circle (the RHS) by a plain circle (the LHS). As $W$ is connected we may assume it has only one 0-handle, and applying this observation to every 1-handle we may further reduce to simply connected $W$.

The scalar $\SS_\II(W)$ is multiplicative under connected sum as $\SS_\II(S^4)=1$. As $\SS_\II(\mathbb C P_2)\SS_\II(\overline{\mathbb C P}_2) \neq 0$, it is stable under $\mathbb C P_2$-stabilization. Finally, two simply connected closed 4-manifolds are $\mathbb C P_2$-stably diffeomorphic if and only if they have same signature and Euler characteristic. Taking connected sum with $\mathbb C P_2\# \overline{\mathbb C P}_2$ adds 2 to the Euler characteristic, and two 4-manifolds with same signature have Euler characteristic differing by an even number. As $\SS_\II(\mathbb C P_2\# \overline{\mathbb C P}_2)= \Delta_+\Delta_- =1$, the invariant $\SS_\II(W)$ does not depend on the Euler characteristic, and depends only on the signature of $W$ as claimed. 

Finally, if $M$ is closed and $T \subseteq M$ is admissible by \cite[Thm. 4.4 and 5.9]{CGHP} (if $W$ is obtained by 2- and 4-handles there, and for any $W$ by the arguments above) we have $$\SS_\II(W)(T) = \SS_\II(\mathbb CP_2)^{\sigma(W)}\operatorname{L}'(M,T) = \DGGPR_\AA(M, T, \sigma(W))$$
where $\operatorname{L}'$ is the renormalized Lyubashenko invariant introduced in \cite{DGGPR} and $\DGGPR_\AA$ is the TQFT introduced there, which gives invariants of 3-manifolds equipped with an admissible ribbon graph and a ``signature defect" integer.
    \end{proof}

\section{The regular boundary condition to skein theory}\label{Sec_BoundaryCond}
In this section we will define a possibly non-compact boundary condition 
$$\RR: \Triv \Rightarrow \SkFun_\II$$
given by the empty skein.

This boundary condition will be non-compact when $\II \neq \AA$, which is quite different from the non-compact cases of Section \ref{Sec_ExtendedCY}. In the case of interest for Section \ref{Sec_WRT}, we will assume that $\AA$ is modular and $\II=\operatorname{Proj}(\AA)$, in which case $\SS_\II$ is defined on all cobordisms, but $\RR$ will still be non-compact when $\AA$ is non-semisimple.

\subsection{Definition of a boundary condition}
Recall that $\Sk_\II$ is contravariant in the direction of 1-morphism. This is only an annoyance as one can always take opposite orientation on cobordisms to make it covariant, and when we say a boundary condition to $\Sk_\II$ we mean a boundary condition to this covariant functor. However, to avoid confusion or having to introduce a different notation, let us recall what explicit data a boundary condition to a contravariant functor represents.
\begin{definition}\label{def:BondaryCond}
    A boundary condition         \begin{equation*}
            \begin{tikzcd}
            \Cob_{2+1+\varepsilon}^{hop} \ar[dd, bend right=50pt,"\mathlarger\Triv"' pos = 0.45, ""{name=left, right}]\ar[dd, bend left=50pt,"\mathlarger{\ZZ^\varepsilon}"pos = 0.45, ""{name=right, left}] \\ \ \\\CC          
            \ar[from=left,to=right,Rightarrow,"\mathlarger{\RR}"]
        \end{tikzcd}
        \end{equation*} to a categorified TQFT $\ZZ^\varepsilon: \Cob_{2+1+\varepsilon}^{hop}\to \CC$ contravariant in the direction of 1-morphism is the data of
    \begin{description}
        \item[For every object $\Sigma$:] a 1-morphism $\RR(\Sigma): \unit_\CC \to \ZZ^\varepsilon(\Sigma)$
        \item[For every cobordism $M:\Sigma'\to \Sigma$:] a 2-morphism \begin{equation*}
            \begin{tikzcd}[column sep = 50pt]
                \unit_\CC \ar[r, "\RR(\Sigma)"] \ar[d, equal] & \ZZ^\varepsilon(\Sigma) \ar[d, "\ZZ^\varepsilon(M)"]\\
                \unit_\CC \ar[r, "\RR (\Sigma')"] & \ZZ^\varepsilon(\Sigma')
                \arrow[from = 2-1, to = 1-2, "\RR(M)", Rightarrow,sloped, shorten = 10]
            \end{tikzcd}
        \end{equation*}
        \item[Symmetric monoidal structure:] 2-isomorphisms $\RR(\Sigma)\otimes \RR(\Sigma') \Rightarrow \RR(\Sigma\sqcup\Sigma')$ and $\RR(\emptyset)\Rightarrow\unit_\CC$ with appropriate symmetric monoidal structure of $\ZZ^\varepsilon$ inserted to make the source and target of the 1-morphisms match. 
    \end{description}
    Such that 
    \begin{description}
        \item[For every diffeomorphism $f:M_-\to M_+$:] the following equality holds: \begin{equation*}
            \begin{tikzcd}[column sep = 50pt]
                \unit_\CC \ar[r, "\RR \Sigma"] \ar[d, equal] & \ZZ^\varepsilon(\Sigma) \ar[d, "\ZZ^\varepsilon(M_-)"{name = Mm, right}] \ar[r, equal] & \ZZ^\varepsilon(\Sigma)\ar[d, "\ZZ^\varepsilon(M_+)"{name = Mp, right}]\\
                \unit_\CC \ar[r, "\RR \Sigma'"] & \ZZ^\varepsilon(\Sigma') \ar[r, equal] & \ZZ^\varepsilon(\Sigma')
                \arrow[from = 2-1, to = 1-2, "\RR(M_-)", Rightarrow,sloped, shorten = 10]
                \arrow[from = Mm, to = Mp, "\ZZ^\varepsilon(f)", Rightarrow, shorten = 10]
            \end{tikzcd}     
            =
            \begin{tikzcd}[column sep = 50pt]
                \unit_\CC \ar[r, "\RR \Sigma"] \ar[d, equal] & \ZZ^\varepsilon(\Sigma) \ar[d, "\ZZ^\varepsilon(M_+)"]\\
                \unit_\CC \ar[r, "\RR \Sigma'"] & \ZZ^\varepsilon(\Sigma')
                \arrow[from = 2-1, to = 1-2, "\RR(M_+)", Rightarrow,sloped, shorten = 10]
            \end{tikzcd}
        \end{equation*}
        \item[For every composeable 1-morphisms $\Sigma_3\overset{M_{32}}{\to} \Sigma_2 \overset{M_{21}}{\to}\Sigma_1$:] the following equality holds:\begin{equation*}
            \begin{tikzcd}[column sep = 50pt]
                \unit_\CC \ar[r, "\RR \Sigma_1"] \ar[dd, equal] & \ZZ^\varepsilon(\Sigma_1) \ar[dd, "\ZZ^\varepsilon(M_{21}\circ M_{32})"]\\
                & \\
                \unit_\CC \ar[r, "\RR \Sigma_3"] & \ZZ^\varepsilon(\Sigma_3)
                \arrow[from = 3-1, to = 1-2, "\RR(M_{21}\circ M_{32})", Rightarrow, sloped, shorten = 10]
            \end{tikzcd}
            =
            \begin{tikzcd}[column sep = 50pt]
                \unit_\CC \ar[r, "\RR \Sigma_1"] \ar[d, equal] & \ZZ^\varepsilon(\Sigma_1) \ar[d, "\ZZ^\varepsilon(M_{21})"]\\
                \unit_\CC \ar[r, "\RR \Sigma_2"] \ar[d, equal] & \ZZ^\varepsilon(\Sigma_2) \ar[d, "\ZZ^\varepsilon(M_{32})"]\\
                \unit_\CC \ar[r, "\RR \Sigma_3"] & \ZZ^\varepsilon(\Sigma_3)
                \arrow[from = 2-1, to = 1-2, "\RR(M_{21})",sloped, Rightarrow, shorten = 10]
                \arrow[from = 3-1, to = 2-2, "\RR(M_{32})",sloped, Rightarrow, shorten = 10]
            \end{tikzcd}
        \end{equation*}
        \item[Coherence of the symmetric monoidal structure:] see \cite[Def. 2.7]{SPPhD}. 
    \end{description}
    A non-compact boundary condition is one that only gives values to 3-cobordisms with non-empty incoming boundary $\Sigma'$ in every connected component.
\end{definition}

\subsection{The empty skein as a boundary condition}
Let $\II \subseteq\AA$ be a tensor ideal in a ribbon category.
\begin{definition}
    Let $\Sigma$ be a surface. We define the 1-morphism $\RR_\II(\Sigma)$ from $\unit$ to $\SkCat_\II(\Sigma)$ in $\Bimod$ by
    $$\begin{array}{rcl}
        \RR_\II(\Sigma): \SkCat_\II(\Sigma)^{op}&\to&\Vect\\
        X &\mapsto&  \Sk_\II(\Sigma\times [0,1]; X,\emptyset)
    \end{array}$$
    It is called the distinguished object in \cite{BHskcat, BBJ}. See Figure \ref{fig:DistObj}.

    If $\II=\AA$, then this presheaf is actually representable, and represented by any number of points all colored by the monoidal unit, which we will call the empty collection of points. 
\end{definition}
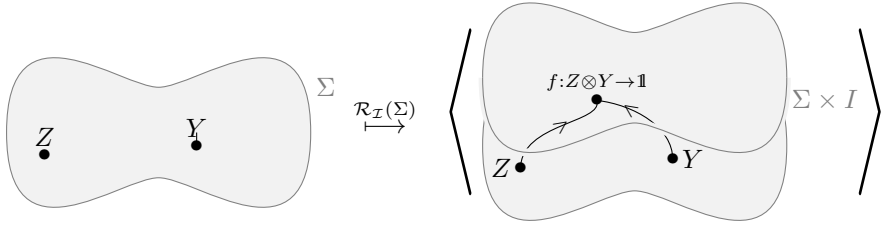
\begin{figure}
    \centering
\begin{tikzpicture}[yscale = 0.6, baseline = 0pt]
    \fill[gray!10] (0,0) ..controls (0,3) and (1.5,1) .. (2,1) .. controls (2.5,1) and (4,3)..(4,0)..controls (4,-3) and (2.5,-1).. (2,-1)..controls (1.5,-1) and (0,-3).. (0,0);
    \draw[gray] (0,0) ..controls (0,3) and (1.5,1) .. (2,1) .. controls (2.5,1) and (4,3)..(4,0)..controls (4,-3) and (2.5,-1).. (2,-1)..controls (1.5,-1) and (0,-3).. (0,0);
    \draw (0.5,-0.5) -- ++(0,0) node{$\bullet$} node[above]{$Z$};
    \draw (2.5,0) -- ++(0,-0.3) node{$\bullet$} node[above]{$Y$};
    \node[gray] at (4.2,1){$\Sigma$};
\end{tikzpicture} $\overset{\RR_\II(\Sigma)}\longmapsto$ 
\begin{tikzpicture}[yscale = 0.6, baseline = 5pt]
    \fill[gray!10] (0,0) ..controls (0,3) and (1.5,1) .. (2,1) .. controls (2.5,1) and (4,3)..(4,0)..controls (4,-3) and (2.5,-1).. (2,-1)..controls (1.5,-1) and (0,-3).. (0,0);
    \draw[gray] (0,0) ..controls (0,3) and (1.5,1) .. (2,1) .. controls (2.5,1) and (4,3)..(4,0)..controls (4,-3) and (2.5,-1).. (2,-1)..controls (1.5,-1) and (0,-3).. (0,0);
    \begin{scope}[yshift = 1.5cm]
    \fill[gray!10] (0,0) ..controls (0,3) and (1.5,1) .. (2,1) .. controls (2.5,1) and (4,3)..(4,0)..controls (4,-3) and (2.5,-1).. (2,-1)..controls (1.5,-1) and (0,-3).. (0,0);
    \draw[gray] (0,0) ..controls (0,3) and (1.5,1) .. (2,1) .. controls (2.5,1) and (4,3)..(4,0)..controls (4,-3) and (2.5,-1).. (2,-1)..controls (1.5,-1) and (0,-3).. (0,0);
    \end{scope}
    \draw (0.5,-0.5) node{$\bullet$} node[left]{$Z$} .. controls (0.5, 0.5) and (1.6,0.5).. (1.5,1) node[midway, sloped]{$\scriptstyle >$} node{$\bullet$} node[above]{$\scriptstyle f:Z\otimes Y \to \unit$} .. controls (2,0.9) and (2.5,0.4).. (2.5,-0.3)node[pos = 0.3, sloped]{$\scriptstyle <$} node{$\bullet$} node[right]{$Y$};
    \node[gray] at (4.5,1){$\Sigma\times I$};
\begin{scope}[yshift = 1.5cm]
    \draw[gray!10, line width = 3pt] (4,0)..controls (4,-3) and (2.5,-1).. (2,-1)..controls (1.5,-1) and (0,-3).. (0,0);
    \draw[gray] (4,0)..controls (4,-3) and (2.5,-1).. (2,-1)..controls (1.5,-1) and (0,-3).. (0,0);
    \end{scope}
    \node[yscale = 3.5, xscale = 2] at (-0.3,0.75){$\Big\langle$};
    \node[yscale = 3.5, xscale = 2] at (5.1,0.75){$\Big\rangle$};
\end{tikzpicture}
\caption{The ``empty" object as a presheaf on $\SkCat_\II(\Sigma)$. The configuration of colored points $X\in \SkCat_\II(\Sigma)$ at the left is mapped to the skein module $\Sk_\II(\Sigma\times I; X, \emptyset)$ at the right.}
    \label{fig:DistObj}
\end{figure}
\begin{definition}
    Let $M:\Sigma' \to \Sigma$ be a cobordism where $\pi_0(\Sigma')\to \pi_0(M)$ is surjective. We define a natural transformation $$\begin{tikzcd}[column sep = 50pt]
                \unit_\CC \ar[r, "\RR_\II \Sigma"] \ar[d, equal] & \SkFun_\II(\Sigma) \ar[d, "\SkFun_\II(M)"]\\
                \unit_\CC \ar[r, "\RR_\II \Sigma'"] & \SkFun_\II(\Sigma')
                \arrow[from = 2-1, to = 1-2, "\RR_\II(M)", Rightarrow,sloped, shorten = 10]
            \end{tikzcd}$$
            whose component on $X \in \SkCat_\II(\Sigma')$ is 
    $$\begin{array}{rcl}
         (\RR_\II(M))_X: \Sk_\II(\Sigma'\times [0,1]; X,\emptyset) &\to&  \Sk_\II(M;X,\emptyset)\\
         T &\mapsto& i_*T
    \end{array}$$
        where $i:\Sigma'\times [0,1]\inj M$ is the collar of the boundary, and we have used the equivalence 
        \begin{equation}\label{eq:BondM}
            (\SkFun_\II(M)\circ \RR_\II(\Sigma))(X):= \int^{Y \in \SkCat_\II(\Sigma)}\Sk_\II(\Sigma\times [0,1]; Y,\emptyset) \otimes \Sk_\II(M;X,Y) \overset{glue}{\simeq} \Sk_\II(M;X,\emptyset)
        \end{equation}
as in Theorem \ref{thm:SkCategTQFT}. The transported skein $i_*T$ is indeed admissible as $i$ is surjective on connected components. We think of this as ``extending $T$ by the empty skein in $M$". See Figure \ref{fig:BoundaryM}.

Note that if $\II=\AA$, then this is defined for every $M$, without the restriction that $\pi_0(\Sigma')\to \pi_0(M)$ is surjective.
\end{definition}

\begin{figure}
    \centering
\begin{tikzpicture}[xscale = 0.75,yscale = 0.45, baseline = 35pt]
    \fill[gray!10] (0,0) ..controls (0,3) and (1.5,1) .. (2,1) .. controls (2.5,1) and (4,3)..(4,0)..controls (4,-3) and (2.5,-1).. (2,-1)..controls (1.5,-1) and (0,-3).. (0,0);
    \draw[gray] (0,0) ..controls (0,3) and (1.5,1) .. (2,1) .. controls (2.5,1) and (4,3)..(4,0)..controls (4,-3) and (2.5,-1).. (2,-1)..controls (1.5,-1) and (0,-3).. (0,0);
    \begin{scope}[yshift = 1.5cm]
    \fill[gray!10] (0,0) ..controls (0,3) and (1.5,1) .. (2,1) .. controls (2.5,1) and (4,3)..(4,0)..controls (4,-3) and (2.5,-1).. (2,-1)..controls (1.5,-1) and (0,-3).. (0,0);
    \draw[gray] (0,0) ..controls (0,3) and (1.5,1) .. (2,1) .. controls (2.5,1) and (4,3)..(4,0)..controls (4,-3) and (2.5,-1).. (2,-1)..controls (1.5,-1) and (0,-3).. (0,0);
    \end{scope}
    \draw (0.5,-0.5) node{$\bullet$} node[left]{$Z$} .. controls (0.5, 0.5) and (1.6,0.5).. (1.5,1) node[midway, sloped]{$\scriptstyle >$} node{$\bullet$} node[above]{$\scriptstyle f:Z\otimes Y \to \unit$} .. controls (2,0.9) and (2.5,0.4).. (2.5,-0.3)node[pos = 0.3, sloped]{$\scriptstyle <$} node{$\bullet$} node[below right]{$Y$};
    \node[gray] at (2,3.2){$\Sigma'\times I$};
\begin{scope}[yshift = 1.5cm]
    \draw[gray!10, line width = 3pt] (4,0)..controls (4,-3) and (2.5,-1).. (2,-1)..controls (1.5,-1) and (0,-3).. (0,0);
    \draw[gray] (4,0)..controls (4,-3) and (2.5,-1).. (2,-1)..controls (1.5,-1) and (0,-3).. (0,0);
\end{scope}
\end{tikzpicture}
$
\overset{\RR_\II(M)}\longmapsto
$
\begin{tikzpicture}[xscale = 0.75,yscale = 0.45, baseline = 35pt]
    \fill[gray!10] (0,0) ..controls (0,3) and (1.5,1) .. (2,1) .. controls (2.5,1) and (4,3)..(4,0)..controls (4,-3) and (2.5,-1).. (2,-1)..controls (1.5,-1) and (0,-3).. (0,0);
    \draw[gray] (0,0) ..controls (0,3) and (1.5,1) .. (2,1) .. controls (2.5,1) and (4,3)..(4,0)..controls (4,-3) and (2.5,-1).. (2,-1)..controls (1.5,-1) and (0,-3).. (0,0);

    \fill[gray!10] (0,0) -- (0,1.5)..controls (0,2.5) and (-0.2,3).. (-0.2,3.5).. controls (-0.2,4.5) and (1,5).. (1,6) -- (1,7.5) arc (180:0:1) --(3,6) ..controls (3,5) and (4.2,4.5)..(4.2,3.5)..controls (4.2,3) and (4,2.5)..(4,1.5) -- (4,0);
    \draw[gray] (0,0) -- (0,1.5)..controls (0,2.5) and (-0.2,3).. (-0.2,3.5).. controls (-0.2,4.5) and (1,5).. (1,6) -- (1,7.5);
    \draw[gray] (1,6) arc(180:360:1);
    \draw[gray, dashed] (1,6) arc(180:0:1);
    \draw[gray] (1,7.5) arc(180:540:1);
    \draw[gray] (3,7.5)--(3,6) node[midway, right]{$\Sigma$}..controls (3,5) and (4.2,4.5)..(4.2,3.5) node[pos = 0.7,above=5pt]{$M$}..controls (4.2,3) and (4,2.5)..(4,1.5) -- (4,0) node[midway, right]{$\Sigma'$};
    
    \draw (0.5,-0.5) node{$\bullet$} node[left]{$Z$} .. controls (0.5, 0.5) and (1.6,0.5).. (1.5,1) node[midway, sloped]{$\scriptstyle >$} node{$\bullet$} node[above]{$\scriptstyle f:Z\otimes Y \to \unit$} .. controls (2,0.9) and (2.5,0.4).. (2.5,-0.3)node[pos = 0.3, sloped]{$\scriptstyle <$} node{$\bullet$} node[below right]{$Y$};

    \draw[gray, dashed] (0,0) ..controls (0,3) and (1.5,1) .. (2,1) .. controls (2.5,1) and (4,3)..(4,0);
\begin{scope}[yshift = 1.5cm]
    \draw[gray, dashed] (0,0) ..controls (0,3) and (1.5,1) .. (2,1) .. controls (2.5,1) and (4,3)..(4,0);
    \draw[gray!10, line width = 3pt] (4,0)..controls (4,-3) and (2.5,-1).. (2,-1)..controls (1.5,-1) and (0,-3).. (0,0);
    \draw[gray] (4,0)..controls (4,-3) and (2.5,-1).. (2,-1)..controls (1.5,-1) and (0,-3).. (0,0);
\end{scope}
\end{tikzpicture}
$
\overset{\eqref{eq:BondM}}{=}
$
\begin{tikzpicture}[xscale = 0.75,yscale = 0.45, baseline = 35pt]
    \fill[gray!10] (0,0) ..controls (0,3) and (1.5,1) .. (2,1) .. controls (2.5,1) and (4,3)..(4,0)..controls (4,-3) and (2.5,-1).. (2,-1)..controls (1.5,-1) and (0,-3).. (0,0);
    \draw[gray] (0,0) ..controls (0,3) and (1.5,1) .. (2,1) .. controls (2.5,1) and (4,3)..(4,0)..controls (4,-3) and (2.5,-1).. (2,-1)..controls (1.5,-1) and (0,-3).. (0,0);

    \fill[gray!10] (0,0) -- (0,1.5)..controls (0,2.5) and (-0.2,3).. (-0.2,3.5).. controls (-0.2,4.5) and (1,5).. (1,6) -- (1,7.5) arc (180:0:1) --(3,6) ..controls (3,5) and (4.2,4.5)..(4.2,3.5)..controls (4.2,3) and (4,2.5)..(4,1.5) -- (4,0);
    \draw[gray] (0,0) -- (0,1.5)..controls (0,2.5) and (-0.2,3).. (-0.2,3.5).. controls (-0.2,4.5) and (1,5).. (1,6) -- (1,7.5);
    \draw[gray, dashed] (1,6) arc(180:0:1);
    \draw[gray] (1,7.5) arc(180:540:1);
    \draw[gray] (3,7.5)--(3,6) ..controls (3,5) and (4.2,4.5)..(4.2,3.5) ..controls (4.2,3) and (4,2.5)..(4,1.5) -- (4,0);
    
    \draw (0.5,-0.5) node{$\bullet$} node[left]{$Z$} .. controls (0.5, 0.5) and (1.6,0.5).. (1.5,1) node[midway, sloped]{$\scriptstyle >$} node{$\bullet$} node[above]{$\scriptstyle f$} .. controls (2,0.9) and (2,2).. (2,6)node[pos = 0.3, sloped]{$\scriptstyle <$} node{$\bullet$}..controls (2, 7) and (2.3,7)..(2.3,6) node{$\bullet$} ..controls (2.3,4) and (2.5,1).. (2.5,-0.3) node[pos = 0.3, sloped]{$\scriptstyle <$} node{$\bullet$} node[below right]{$Y$};

    \draw[gray!10, line width = 5pt] (1.1,6) arc(180:360:0.9 and 1);
    \draw[gray] (1,6) arc(180:360:1);
    \draw[gray, dashed] (0,0) ..controls (0,3) and (1.5,1) .. (2,1) .. controls (2.5,1) and (4,3)..(4,0);
\begin{scope}[yshift = 1.5cm]
    \draw[gray, dashed] (0,0) ..controls (0,3) and (1.5,1) .. (2,1) .. controls (2.5,1) and (4,3)..(4,0);
    \draw[gray!10, line width = 3pt] (4,0)..controls (4,-3) and (2.5,-1).. (2,-1)..controls (1.5,-1) and (0,-3).. (0,0);
    \draw[gray] (4,0)..controls (4,-3) and (2.5,-1).. (2,-1)..controls (1.5,-1) and (0,-3).. (0,0);
\end{scope}
\node[yscale = 1.7] at (4.5,6.8){$\Big\}$};
\node[yscale = 2.4] at (4.5,2.7){$\Bigg\}$};
\node at (5.3,6.8){$\RR_\II(\Sigma)$};
\node at (5.6,2.7){$\SkFun_\II(M)$};
\end{tikzpicture}
\caption{The ``empty skein" as a boundary condition to $\SkFun_\II$. At the left is a skein $T \in \Sk_\II(\Sigma'\times [0,1]; X,\emptyset)$. In the middle is the same skein $i_*T \in \Sk_\II(M;X,\emptyset)$ pushed in $M$, which we think of as $T$ extended by the empty skein. At the right is the image of this skein under the equivalence $(\SkFun_\II(M)\circ \RR_\II(\Sigma))(X) \overset{\eqref{eq:BondM}}{\simeq} \Sk_\II(M;X,\emptyset)$.}
    \label{fig:BoundaryM}
\end{figure}

\begin{theorem}\label{thm:BondCond}
    There exists a (non compact if $\II\neq \AA$) boundary condition $$\RR_\II:\Triv \Rightarrow \SkFun_\II$$ with $\RR_\II(\Sigma)$ and $\RR_\II(M)$ as defined above. 
\end{theorem}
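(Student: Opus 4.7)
The plan is to verify in turn each item in Definition \ref{def:BondaryCond}: well-definedness of the data $\RR_\II(\Sigma)$ and $\RR_\II(M)$, the symmetric monoidal structure, naturality with respect to diffeomorphisms, and the key composition axiom. First I would check that $\RR_\II(\Sigma)$ really is a 1-morphism in $\Bimod$, i.e.\ a profunctor $\SkCat_\II(\Sigma)^{op}\to \Vect$. This is immediate: $\Sk_\II(\Sigma\times[0,1];-,\emptyset)$ is functorial in its first argument via the action of skeins in $\Sigma\times[0,1]$ on itself by gluing, which is well-defined by Theorem \ref{thm:SkCategTQFT}. For $\RR_\II(M)$ I would first observe that the pushed skein $i_*T$ is admissible precisely when $i:\Sigma'\times[0,1]\inj M$ is surjective on connected components (explaining the non-compact condition when $\II\neq\AA$), that $i_*$ respects admissible skein relations since $i$ is an embedding, and that the resulting maps $(\RR_\II(M))_X$ are natural in $X$ because the action of morphisms in $\SkCat_\II(\Sigma')$ happens inside the collar, which $i_*$ preserves.

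Symmetric monoidality is straightforward. An admissible skein in $(\Sigma\sqcup\Sigma')\times[0,1]$ is canonically a disjoint union of admissible skeins in $\Sigma\times[0,1]$ and $\Sigma'\times[0,1]$, which gives the structure 2-isomorphism $\RR_\II(\Sigma)\otimes\RR_\II(\Sigma')\tilde\Rightarrow\RR_\II(\Sigma\sqcup\Sigma')$; the case of $\emptyset$ is an identity. Naturality with respect to a diffeomorphism $f:M_-\to M_+$ in $\Cob_{2+1+\varepsilon}$ reduces to the identity $f_*\circ i_{M_-,*}=i_{M_+,*}$, which holds because the 2-morphisms of $\Cob_{2+1+\varepsilon}$ are required to preserve collars (up to a reparametrization of $[-1,1]$ absorbed by isotopy invariance of skeins).

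The main obstacle, and the step that needs the most care, is the composition axiom. Given $\Sigma_3\overset{M_{32}}\to\Sigma_2\overset{M_{21}}\to\Sigma_1$ and $X\in\SkCat_\II(\Sigma_3)$, I need to show that the component at $X$ of $\RR_\II(M_{21}\circ M_{32})$ agrees with the ``pasting'' of $\RR_\II(M_{32})$ and $\RR_\II(M_{21})$ under the gluing isomorphism of Theorem \ref{thm:SkCategTQFT}. Unwinding the coends, this amounts to the statement that pushing $T\in \Sk_\II(\Sigma_3\times[0,1];X,\emptyset)$ into $M_{21}\circ M_{32}$ along the collar of $\Sigma_3$ equals, up to the isotopy $\varphi$ of Figure \ref{fig:CoendIsotopy}, the skein obtained by first pushing $T$ into $M_{32}$ via its $\Sigma_3$-collar, then gluing on the empty skein of $M_{21}$ at $\Sigma_2$. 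Both skeins can be represented by the same underlying submanifold of $M_{21}\underset{\Sigma_2\times I}{\cup}M_{32}$, namely $i_{M_{21}\circ M_{32},*}T$, because the collars used to define composition in $\Cob_{2+1+1}$ are compatible with the individual collars of the $M_{ij}$. The coend relations absorb the choice of where along the combined collar one performs the push.

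Finally I would check the pentagon-type coherences of the symmetric monoidal structure from \cite[Def. 2.7]{SPPhD}: since all the structure 2-isomorphisms above come from literal equalities of embedded ribbon graphs in disjoint unions, these coherences hold on the nose. Assembling these verifications yields the desired (non-compact when $\II\neq\AA$) boundary condition $\RR_\II$.
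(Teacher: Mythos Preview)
Your proposal is correct and follows essentially the same approach as the paper's proof: both verify naturality under diffeomorphisms via collar-preservation, and both reduce the composition axiom to the observation that the pushed skein $i_*T$ in $M_{21}\circ M_{32}$ represents the same element whether one pushes directly along the $\Sigma_3$-collar of the composite or first pushes into $M_{32}$ and then extends by the empty skein through $M_{21}$. The paper is slightly more explicit about one step you gloss over, namely that to apply the whiskered $\RR_\II(M_{21})$ one must first isotope $(i_3)_*T$ to meet $\Sigma_2\times I$ and decompose it as $T'\cup T''$ through the gluing isomorphism \eqref{eq:BondM}; conversely, you spell out more of the preliminary well-definedness checks that the paper takes for granted.
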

\begin{proof}
    The symmetric monoidal structure is the identity, which simplifies a lot the verification. We simply have to check naturality with respect to diffeomorphisms and composition.

    Let $f: M \to M'$ be a diffeomorphism preserving the collars, then indeed $f_*(i_*T) = i_*T$ as $i_*T$ is concentrated near the collars. 

    Let $\Sigma_3\overset{M_{32}}{\to} \Sigma_2 \overset{M_{21}}{\to}\Sigma_1$ be composeable, and take $T \in \Sk_\II(\Sigma_3\times [0,1]; X,\emptyset)$. Then $\RR(M_{32})(T)$ is $(i_3)_*T$, which we may isotope to meet $\Sigma_2\times I \subseteq M_{23}$ and write as $T'\cup T''$ for some $T' \in \Sk_\II(\Sigma_2\times [0,1]; Y,\emptyset), T''\in \Sk_\II(M;X,Y)$. Then $$(\RR_\II(M_{21})\circ_h \id_{M_{32}})\circ \RR_\II(M_{32})(T) = (i_2)_*T'\cup T'' \in \Sk_\II(M_{21}\circ M_{32};X,\emptyset)$$ is equal to $i_* \circ (i_3)_* (T)$, where $i:M_{32}\inj M_{21}\circ M_{32}$ is the canonical inclusion, and hence to $\RR_\II(M_{21}\circ M_{32})(T)$.
\end{proof}

\section{Non-semisimple WRT at the boundary of Crane--Yetter} \label{Sec_WRT}

In this section we will explain how to obtain Witten--Reshetikhin--Turaev 3-TQFTs, and their non-semisimple generalizations \cite{DGGPR}, from the once-extended 4-TQFT $\SS_\II$ defined in Section \ref{Sec_ExtendedCY} and its boundary condition $\RR_\II$ defined in Section \ref{Sec_BoundaryCond}.

We assume in this section that $\AA$ is a modular category in the sense of \cite{Lyub3mfldProjMCG, DGGPR} (which include semisimple modular categories in the sense of \cite{TuraevBook} but allow non-semisimple examples), and that $\II \subseteq \AA$ is the ideal of projective objects, so $\II = \AA$ if and only if $\AA$ is semisimple. 

\subsection{Anomalous theories}
We begin by explaining how to obtain an anomalous theory out of the data $\SS,\RR$. 
%This is a fairly general and known construction.
\begin{definition}
    The \textbf{category of filled $(n+1)$-cobordisms} $\cob_{n+1}^{filled}$ has:
    \begin{description}
        \item[Objects: ] Closed $n$-manifolds $\Sigma$ equipped with a bounding $(n+1)$-manifold $H: \emptyset\to \Sigma$
        \item[Morphisms: ] Cobordisms $M: \Sigma \to \Sigma'$ equipped with a bounding $(n+2)$-manifold 
        \begin{tikzcd}
            \emptyset \ar[d, equal]\ar[r, "H"] & \Sigma\ar[d, "M"']  \\
            \emptyset\ar[r, "H'"] & \Sigma'
            \arrow[from = 2-1, to = 1-2, "W", Leftarrow, shorten = 5]
        \end{tikzcd} 
    \end{description}
\end{definition}
\begin{definition}\label{def:anomalous}
    Let $\SS:\Cob_{2+1+1}^{hop}\to \CC$ be a once-extended TQFT contravariant in the direction of 1-morphisms and $\RR:\Triv \to \SS^\varepsilon$ a (resp. non-compact) boundary condition to $\SS$. 

    The \textbf{anomalous theory}\footnote{This terminology is maybe only appropriate when $\SS$ is an invertible theory, which will be the case in our example.} $\Aa_{\SS,\RR}$ associated to $\SS,\RR$ is the symmetric monoidal functor
    $$ \begin{array}{rcl}
         \Aa_{\SS,\RR} : \cob_{n+1}^{filled}& \to& \Omega\CC:= \End_\CC(\unit_\CC)\\
         \left[\emptyset\overset{H}{\to} \Sigma\right] &\mapsto & \hskip4pt\left[ \unit_\CC \overset{\RR (\Sigma)}{\longrightarrow} \SS(\Sigma) \overset{\SS(H)}{\longrightarrow} \unit_\CC \right]\\
         \left[ \begin{tikzcd}
            \emptyset \ar[d, equal]\ar[r, "H"] & \Sigma\ar[d, "M"']  \\
            \emptyset\ar[r, "H'"] & \Sigma'
            \arrow[from = 2-1, to = 1-2, "W", Leftarrow, shorten = 5]
        \end{tikzcd} \right] &\mapsto & \hskip-4pt
    \left[ \begin{tikzcd}[column sep = 45] \unit_\CC \ar[r, "\RR(\Sigma)"] \ar[d, equal] & \SS(\Sigma) \ar[r, "\SS(H)"]\ar[d, "\SS(M)"', leftarrow] & \emptyset \ar[d, equal] \\
            \unit_\CC \ar[r, "\RR(\Sigma')"] & \SS(\Sigma')\ar[r, "\SS(H')", near start] & \emptyset
            \arrow[from = 1-1, to = 2-2, "\RR(M)", Rightarrow,sloped, shorten = 5]
            \arrow[from = 1-2, to = 2-3, "\SS(W)", Rightarrow,sloped, shorten = 5]
        \end{tikzcd} \right]
    \end{array}$$
    % If $\SS$ is contravariant on 1-morphisms and $\RR$ is a boundary condition in the sense above, then $\Aa_{\SS,\RR}$ become covariant. \\
    If $\RR$ is non-compact, then $\Aa_{\SS,\RR}$ is also non-compact, i.e. it is only defined on the category $\cob_{2+1}^{filled, nc}$ of cobordisms with non-empty incoming boundary in every connected component.
\end{definition}

The category of filled cobordisms has been suggested by Walker as the right source category for WRT theories. It is more standard to consider a smaller category where we have forgotten part of the data of the filling.
\begin{definition}
    The category $\widetilde \cob_{2+1}$ has objects surfaces equipped with a Lagrangian $L\subseteq H_1(\Sigma)$ and morphisms 3-cobordisms equipped with an integer $n\in \mathbb Z$. Composition is given by composing the cobordisms, adding the integers and adding a Maslov index of the three Lagrangians involved as defined in \cite[Section 2]{WalkerOnWitten}. The category $\widetilde \cob_{2+1}^{nc}$ is the subcategory where 3-cobordisms must have incoming boundary in every connected component.

    The projection $$\pi: \cob_{2+1}^{filled}\to \widetilde \cob_{2+1}$$ takes a pair $(\Sigma,H)$ to $(\Sigma, \operatorname{Ker}(i_*:H_1(\Sigma\to H_1(H)))$ and a pair $(M,W)$ to $(M, \sigma(W))$. Composition in $\widetilde \cob_{2+1}$ is defined precisely to make this assignment preserve composition, using Wall's non-additivity theorem.
\end{definition}

\subsection{Description of the skein-theoretic anomalous theory}\label{ssec:DescrAnomalous}
Let us describe the anomalous theory $\Aa_\II$ associated to the once-extended 4-TQFT $\SS_\II$ defined in Section \ref{Sec_ExtendedCY} and its boundary condition $\RR_\II$ defined in Section \ref{Sec_BoundaryCond}, for modular inputs.

% We conclude with a more informal and intuitive description of WRT and DGGPR TQFTs via the boundary condition and anomaly 4-TQFT described above, i.e. we explicitly describe the anomalous theory $\Aa$.

Let $\AA$ be a modular tensor category, $\II = \operatorname{Proj}(\AA)$, and choose $\mt$ a modified trace on $\II$ with global dimension $\zeta=1$ (equivalently, choose any modified trace $\mt$, pick a square root $\DD$ of its global dimension and consider $\DD^{-1}\mt$). The ribbon category $\AA$ is in particular chromatic compact, Theorem \ref{thm:ExtendedCGHP} constructs a once-extended 4-TQFT $\SS_\II$, and Theorem \ref{thm:BondCond} constructs a boundary condition $\RR_\II$ to it, which is non-compact when $\II\neq \AA$.

% We have seen in Section \ref{Sec_ExtendedCY} that given such a modular category $\AA$, we have a scalar choice of ways to extend $\SkFun_\II$, with $\II = \operatorname{Proj}(\AA)$, into a once-extended theory $\SS_\II$, corresponding to a choice of modified trace. If a choice of modified trace and square root of the global dimension has been made as above, we define $\SS_\II$ to be the theory obtained by using the modified trace $\DD^{-1}\mt$. 
We denote $$\Aa_\II: \cob_{2+1}^{filled, nc} \to \Omega \Bimod \simeq \Vect$$ the anomalous theory associated to $\SS_\II, \RR_\II$, which is non-compact when $\II\neq \AA$.

\paragraph{State spaces of surfaces:} Let $\emptyset\overset{H}{\to} \Sigma$ be a filled surface. By definition $$\Aa_\II(\emptyset\overset{H}{\to} \Sigma) := \SkFun_\II(H)\circ \RR_\II(\Sigma) := \int^{X\in \SkCat_\II(\Sigma)}\Sk_\II(\Sigma\times [0,1]; X,\emptyset) \otimes \Sk_\II(H;X)$$
and, by Theorem \ref{thm:SkCategTQFT}, gluing gives an isomorphism
$$\Aa_\II(\emptyset\overset{H}{\to} \Sigma) \simeq \Sk_\II(H;\emptyset)\ .$$
See Figure \ref{fig:StateSpaceAnom}.
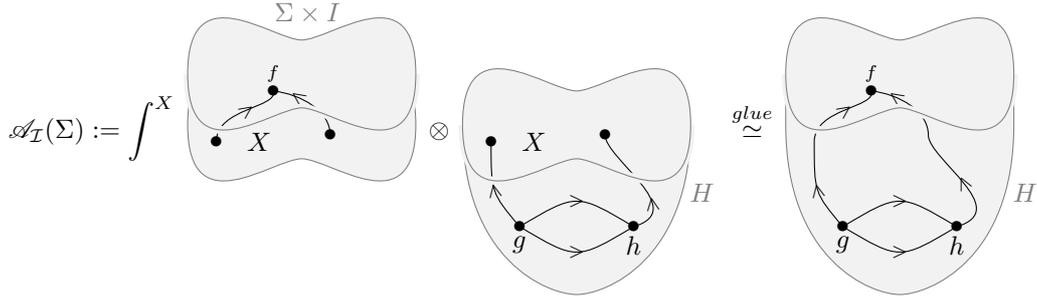
\begin{figure}
    \centering
$\displaystyle \Aa_\II(\Sigma) := \int^X$
\begin{tikzpicture}[xscale = 0.75,yscale = 0.45, baseline = -5pt]
    \fill[gray!10] (0,0) ..controls (0,3) and (1.5,1) .. (2,1) .. controls (2.5,1) and (4,3)..(4,0)..controls (4,-3) and (2.5,-1).. (2,-1)..controls (1.5,-1) and (0,-3).. (0,0);
    \draw[gray] (0,0) ..controls (0,3) and (1.5,1) .. (2,1) .. controls (2.5,1) and (4,3)..(4,0)..controls (4,-3) and (2.5,-1).. (2,-1)..controls (1.5,-1) and (0,-3).. (0,0);
    \begin{scope}[yshift = 1.5cm]
    \fill[gray!10] (0,0) ..controls (0,3) and (1.5,1) .. (2,1) .. controls (2.5,1) and (4,3)..(4,0)..controls (4,-3) and (2.5,-1).. (2,-1)..controls (1.5,-1) and (0,-3).. (0,0);
    \draw[gray] (0,0) ..controls (0,3) and (1.5,1) .. (2,1) .. controls (2.5,1) and (4,3)..(4,0)..controls (4,-3) and (2.5,-1).. (2,-1)..controls (1.5,-1) and (0,-3).. (0,0);
    \end{scope}
    \draw (0.5,-0.5) node{$\bullet$} node[right=8pt]{$X$} .. controls (0.5, 0.5) and (1.6,0.5).. (1.5,1) node[midway, sloped]{$\scriptstyle >$} node{$\bullet$} node[above]{$\scriptstyle f$} .. controls (2,0.9) and (2.5,0.4).. (2.5,-0.3)node[pos = 0.3, sloped]{$\scriptstyle <$} node{$\bullet$};
    \node[gray] at (2.1,3.3){$\Sigma\times I$};
\begin{scope}[yshift = 1.5cm]
    \draw[gray!10, line width = 3pt] (4,0)..controls (4,-3) and (2.5,-1).. (2,-1)..controls (1.5,-1) and (0,-3).. (0,0);
    \draw[gray] (4,0)..controls (4,-3) and (2.5,-1).. (2,-1)..controls (1.5,-1) and (0,-3).. (0,0);
\end{scope}
\end{tikzpicture}  
$
\otimes
$
\begin{tikzpicture}[xscale = 0.75,yscale = 0.45, baseline = -5pt]
    \fill[gray!10] (0,0) ..controls (0,3) and (1.5,1) .. (2,1) .. controls (2.5,1) and (4,3)..(4,0)..controls (4,-3) and (2.5,-1).. (2,-1)..controls (1.5,-1) and (0,-3).. (0,0);
    \fill[gray!10] (4,0)..controls (4,-4) and (2.5,-5).. (2,-5)..controls (1.5,-5) and (0,-4).. (0,0)--cycle;
    \draw[gray] (0,0) ..controls (0,3) and (1.5,1) .. (2,1) .. controls (2.5,1) and (4,3)..(4,0)..controls (4,-3) and (2.5,-1).. (2,-1)..controls (1.5,-1) and (0,-3).. (0,0);
    \draw[gray] (4,0)..controls (4,-4) and (2.5,-5).. (2,-5)..controls (1.5,-5) and (0,-4).. (0,0);
    \draw (0.5,-0.5) node{$\bullet$} node[right=8pt]{$X$}..controls (0.5,-2) .. (1,-3)node[midway, sloped]{$\scriptstyle <$} node{$\bullet$} node[below]{$g$} ..controls (2,-2).. (3,-3)node[midway, sloped]{$\scriptstyle >$} node{$\bullet$} node[below]{$h$} .. controls (4,-2.5) and (2.5,-1).. (2.5,-0.3)node[pos = 0.3, sloped]{$\scriptstyle <$} node{$\bullet$};
    \draw (1,-3)  ..controls (2,-4).. (3,-3)node[midway, sloped]{$\scriptstyle >$};
    \node[gray] at (4.2,-2){$H$};
    \draw[gray!10, line width = 3pt] (4,0)..controls (4,-3) and (2.5,-1).. (2,-1)..controls (1.5,-1) and (0,-3).. (0,0);
    \draw[gray] (4,0)..controls (4,-3) and (2.5,-1).. (2,-1)..controls (1.5,-1) and (0,-3).. (0,0);
\end{tikzpicture}  
$
\overset{glue}{\simeq}
$
\begin{tikzpicture}[xscale = 0.75,yscale = 0.45, baseline = -5pt]
    \fill[gray!10] (0,0) ..controls (0,3) and (1.5,1) .. (2,1) .. controls (2.5,1) and (4,3)..(4,0)..controls (4,-3) and (2.5,-1).. (2,-1)..controls (1.5,-1) and (0,-3).. (0,0);
    \fill[gray!10] (4,0)..controls (4,-4) and (2.5,-5).. (2,-5)..controls (1.5,-5) and (0,-4).. (0,0)--cycle;
    \draw[gray] (4,1.5)--(4,0)..controls (4,-4) and (2.5,-5).. (2,-5)..controls (1.5,-5) and (0,-4).. (0,0)--(0,1.5);
    \begin{scope}[yshift = 1.5cm]
    \fill[gray!10] (0,0) ..controls (0,3) and (1.5,1) .. (2,1) .. controls (2.5,1) and (4,3)..(4,0)..controls (4,-3) and (2.5,-1).. (2,-1)..controls (1.5,-1) and (0,-3).. (0,0);
    \draw[gray] (0,0) ..controls (0,3) and (1.5,1) .. (2,1) .. controls (2.5,1) and (4,3)..(4,0)..controls (4,-3) and (2.5,-1).. (2,-1)..controls (1.5,-1) and (0,-3).. (0,0);
    \end{scope}
    \draw (0.5,-0.5).. controls (0.5, 0.5) and (1.6,0.5).. (1.5,1) node[midway, sloped]{$\scriptstyle >$} node{$\bullet$} node[above]{$\scriptstyle f$} .. controls (2,0.9) and (2.5,0.4).. (2.5,-0.3)node[pos = 0.3, sloped]{$\scriptstyle <$};
    \draw (0.5,-0.5)..controls (0.5,-2) .. (1,-3)node[midway, sloped]{$\scriptstyle <$} node{$\bullet$} node[below]{$g$} ..controls (2,-2).. (3,-3)node[midway, sloped]{$\scriptstyle >$} node{$\bullet$} node[below]{$h$} .. controls (4,-2.5) and (2.5,-1).. (2.5,-0.3)node[midway, sloped]{$\scriptstyle <$};
    \draw (1,-3)  ..controls (2,-4).. (3,-3)node[midway, sloped]{$\scriptstyle >$};
    \node[gray] at (4.2,-2){$H$};
\begin{scope}[yshift = 1.5cm]
    \draw[gray!10, line width = 3pt] (4,0)..controls (4,-3) and (2.5,-1).. (2,-1)..controls (1.5,-1) and (0,-3).. (0,0);
    \draw[gray] (4,0)..controls (4,-3) and (2.5,-1).. (2,-1)..controls (1.5,-1) and (0,-3).. (0,0);
\end{scope}
\end{tikzpicture}    \caption{The state spaces $\Aa_\II(\Sigma)$ of the anomalous theory on a filled surface $\emptyset\overset{H}{\to} \Sigma$ is the admissible skein module of $H$ without boundary points.}
    \label{fig:StateSpaceAnom}
\end{figure}

\paragraph{Correlation functions of 3-cobordisms:} Let $M:\Sigma \to \Sigma'$ be a 3-cobordism and $W:H\underset{\Sigma}{\cup}M \Rightarrow H'$ a filling. If $\II\neq\AA$, suppose that $\pi_0(\Sigma)\twoheadrightarrow  \pi_0(M)$. Denote $i_\Sigma:\Sigma\times I \to M$ the collar and $i:H \subseteq H\underset{\Sigma}{\cup}M$ the inclusion. By definition,
$$    \Aa_\II(M,W) := (\SS_\II(W) \circ_h  \id_{\RR_\II(\Sigma')}) \circ (\id_{\SkFun_\II(H)}\circ_h \RR_\II(M)) \ .
$$
Let $T \in \Sk_\II(H;\emptyset)$ be an admissible skein, which one can write as $T_H \otimes T_\Sigma \in \SkFun_\II(H) \circ \RR_\II(\Sigma) \simeq  \Sk_\II(H)$. Then $$(\id_{\SkFun_\II(H)}\circ_h\RR_\II(M))(T) := i_{\Sigma,*}T_\Sigma \otimes T_H \overset{glue}\simeq i_*T \in \Sk_\II(H\underset{\Sigma}{\cup}M;\emptyset)$$
and $$\Aa_\II(M,W)(T) = \SS_\II(W)(i_*T)\ .$$
See Figure \ref{fig:AaofM}.

\begin{figure}
    \centering
\begin{tikzpicture}[xscale = 0.75,yscale = 0.45, baseline = 15pt]
    \fill[gray!10] (0,0) ..controls (0,3) and (1.5,1) .. (2,1) .. controls (2.5,1) and (4,3)..(4,0)..controls (4,-3) and (2.5,-1).. (2,-1)..controls (1.5,-1) and (0,-3).. (0,0);
    \fill[gray!10] (4,0)..controls (4,-4) and (2.5,-5).. (2,-5)..controls (1.5,-5) and (0,-4).. (0,0)--cycle;
    \draw[gray] (4,1.5)--(4,0)..controls (4,-4) and (2.5,-5).. (2,-5)..controls (1.5,-5) and (0,-4).. (0,0)--(0,1.5);
    \begin{scope}[yshift = 1.5cm]
    \fill[gray!10] (0,0) ..controls (0,3) and (1.5,1) .. (2,1) .. controls (2.5,1) and (4,3)..(4,0)..controls (4,-3) and (2.5,-1).. (2,-1)..controls (1.5,-1) and (0,-3).. (0,0);
    \draw[gray] (0,0) ..controls (0,3) and (1.5,1) .. (2,1) .. controls (2.5,1) and (4,3)..(4,0) node[right]{$\Sigma$}..controls (4,-3) and (2.5,-1).. (2,-1)..controls (1.5,-1) and (0,-3).. (0,0);
    \end{scope}
    \draw (0.5,-0.5).. controls (0.5, 0.5) and (1.6,0.5).. (1.5,1) node{$\bullet$} node[above]{$\scriptstyle f$} .. controls (2,0.9) and (2.5,0.4).. (2.5,-0.3);
    \draw (0.5,-0.5)..controls (0.5,-2) .. (1,-3)node[midway, sloped]{$\scriptstyle <$} node{$\bullet$} node[below]{$g$} ..controls (2,-2).. (3,-3)node[midway, sloped]{$\scriptstyle >$} node{$\bullet$} node[below]{$h$} .. controls (4,-2.5) and (2.5,-1).. (2.5,-0.3)node[midway, sloped]{$\scriptstyle <$};
    \draw (1,-3)  ..controls (2,-4).. (3,-3)node[midway, sloped]{$\scriptstyle >$};
    \node[gray] at (4.2,-2){$H$};
\begin{scope}[yshift = 1.5cm]
    \draw[gray!10, line width = 3pt] (4,0)..controls (4,-3) and (2.5,-1).. (2,-1)..controls (1.5,-1) and (0,-3).. (0,0);
    \draw[gray] (4,0)..controls (4,-3) and (2.5,-1).. (2,-1)..controls (1.5,-1) and (0,-3).. (0,0);
\end{scope}
\end{tikzpicture}
$
\overset{\RR}{\longmapsto}
$
\begin{tikzpicture}[xscale = 0.75,yscale = 0.45, baseline = 15pt]
    \fill[gray!10] (0,0) ..controls (0,3) and (1.5,1) .. (2,1) .. controls (2.5,1) and (4,3)..(4,0)..controls (4,-3) and (2.5,-1).. (2,-1)..controls (1.5,-1) and (0,-3).. (0,0);
    \fill[gray!10] (0,0) -- (0,1.5)..controls (0,2.5) and (-0.2,3).. (-0.2,3.5).. controls (-0.2,4.5) and (1,5).. (1,6) -- (1,7.5) arc (180:0:1) --(3,6) ..controls (3,5) and (4.2,4.5)..(4.2,3.5)..controls (4.2,3) and (4,2.5)..(4,1.5) -- (4,0);
    \draw[gray] (0,0) -- (0,1.5)..controls (0,2.5) and (-0.2,3).. (-0.2,3.5).. controls (-0.2,4.5) and (1,5).. (1,6) -- (1,7.5);
    \draw[gray] (1,7.5) arc(180:540:1);

    \fill[gray!10] (0,0) ..controls (0,3) and (1.5,1) .. (2,1) .. controls (2.5,1) and (4,3)..(4,0)..controls (4,-3) and (2.5,-1).. (2,-1)..controls (1.5,-1) and (0,-3).. (0,0);
    \fill[gray!10] (4,0)..controls (4,-4) and (2.5,-5).. (2,-5)..controls (1.5,-5) and (0,-4).. (0,0)--cycle;
    \draw[gray] (4,1.5)--(4,0)..controls (4,-4) and (2.5,-5).. (2,-5)..controls (1.5,-5) and (0,-4).. (0,0)--(0,1.5);
    \begin{scope}[yshift = 1.5cm]
    \fill[gray!10] (0,0) ..controls (0,3) and (1.5,1) .. (2,1) .. controls (2.5,1) and (4,3)..(4,0)..controls (4,-3) and (2.5,-1).. (2,-1)..controls (1.5,-1) and (0,-3).. (0,0);
    \end{scope}
    \draw (0.5,-0.5).. controls (0.5, 0.5) and (1.6,0.5).. (1.5,1) node{$\bullet$} node[above]{$\scriptstyle f$} .. controls (2,0.9) and (2.5,0.4).. (2.5,-0.3);
    \draw (0.5,-0.5)..controls (0.5,-2) .. (1,-3)node[midway, sloped]{$\scriptstyle <$} node{$\bullet$} node[below]{$g$} ..controls (2,-2).. (3,-3)node[midway, sloped]{$\scriptstyle >$} node{$\bullet$} node[below]{$h$} .. controls (4,-2.5) and (2.5,-1).. (2.5,-0.3)node[midway, sloped]{$\scriptstyle <$};
    \draw (1,-3)  ..controls (2,-4).. (3,-3)node[midway, sloped]{$\scriptstyle >$};
    \node[gray] at (4.2,-2){$H$};
    
    \draw[gray] (3,7.5)--(3,6) node[pos =0, right]{$\Sigma'$}..controls (3,5) and (4.2,4.5)..(4.2,3.5) node[pos = 0.7,above=5pt]{$M$}..controls (4.2,3) and (4,2.5)..(4,1.5) -- (4,0);
\begin{scope}[yshift = 1.5cm]
    \draw[gray!10, line width = 3pt] (4,0)..controls (4,-3) and (2.5,-1).. (2,-1)..controls (1.5,-1) and (0,-3).. (0,0);
    \draw[gray] (4,0)..controls (4,-3) and (2.5,-1).. (2,-1)..controls (1.5,-1) and (0,-3).. (0,0);
    \draw[gray, dashed] (0,0) ..controls (0,3) and (1.5,1) .. (2,1) .. controls (2.5,1) and (4,3)..(4,0);
\end{scope}
\end{tikzpicture}
$
\overset{\SS}{\longmapsto}
$
\begin{tikzpicture}[xscale = 0.75,yscale = 0.45, baseline = 15pt]
    \fill[gray!10] (0,0) ..controls (0,3) and (1.5,1) .. (2,1) .. controls (2.5,1) and (4,3)..(4,0)..controls (4,-3) and (2.5,-1).. (2,-1)..controls (1.5,-1) and (0,-3).. (0,0);
    \fill[gray!10] (0,0) -- (0,1.5)..controls (0,2.5) and (-0.2,3).. (-0.2,3.5).. controls (-0.2,4.5) and (1,5).. (1,6) -- (1,7.5) arc (180:0:1) --(3,6)..controls (3,5) and (4,3.5)..(4,1.5) -- (4,0);
    \draw[gray] (0,0) -- (0,1.5)..controls (0,2.5) and (-0.2,3).. (-0.2,3.5).. controls (-0.2,4.5) and (1,5).. (1,6) -- (1,7.5);
    \draw[gray] (1,7.5) arc(180:540:1);
    \fill[gray!10] (0,0) ..controls (0,3) and (1.5,1) .. (2,1) .. controls (2.5,1) and (4,3)..(4,0)..controls (4,-3) and (2.5,-1).. (2,-1)..controls (1.5,-1) and (0,-3).. (0,0);
    \fill[gray!10] (4,0)..controls (4,-4) and (2.5,-5).. (2,-5)..controls (1.5,-5) and (0,-4).. (0,0)--cycle;
    \draw[gray] (4,1.5)--(4,0)..controls (4,-4) and (2.5,-5).. (2,-5)..controls (1.5,-5) and (0,-4).. (0,0)--(0,1.5);
    \begin{scope}[yshift = 1.5cm]
    \fill[gray!10] (0,0) ..controls (0,3) and (1.5,1) .. (2,1) .. controls (2.5,1) and (4,3)..(4,0)..controls (4,-3) and (2.5,-1).. (2,-1)..controls (1.5,-1) and (0,-3).. (0,0);
    \end{scope}
    \draw (0.5,-0.5).. controls (0.5, 0.5) and (1.6,0.5).. (1.5,1) node{$\bullet$} node[above]{$\scriptstyle f$} .. controls (2,0.9) and (2.5,0.4).. (2.5,-0.3);
    \draw (0.5,-0.5)..controls (0.5,-2) .. (1,-3)node[midway, sloped]{$\scriptstyle <$} node{$\bullet$} node[below]{$g$} ..controls (2,-2).. (3,-3)node[midway, sloped]{$\scriptstyle >$} node{$\bullet$} node[below]{$h$} .. controls (4,-2.5) and (2.5,-1).. (2.5,-0.3)node[midway, sloped]{$\scriptstyle <$};
    \draw (1,-3)  ..controls (2,-4).. (3,-3)node[midway, sloped]{$\scriptstyle >$};
    \node[gray] at (4.4,1){$H'$};
    \draw[gray] (3,7.5)--(3,6) node[pos=0, right]{$\Sigma'$}..controls (3,5) and (4,3.5)..(4,1.5) -- (4,0);
    \draw[gray!10, line width = 5pt] (2,4).. controls (2.5,4) and (3,3).. (3,1)..controls (3,0) and (2.5,-1)..(1.5,-1)..controls (1,-1) and (0.5,0).. (0.5,1) ..controls (0.5,2) and (1,4)..(2,4);
    \draw[red] (2,4).. controls (2.5,4) and (3,3).. (3,1)..controls (3,0) and (2.5,-1)..(1.5,-1)..controls (1,-1) and (0.5,0).. (0.5,1) ..controls (0.5,2) and (1,4)..(2,4);
    \draw[gray!10, line width = 5pt] (0.5,-0.5).. controls (0.5, 0.5) and (1.6,0.5).. (1.5,0.8);
    \draw (0.5,-0.5).. controls (0.5, 0.5) and (1.6,0.5).. (1.5,1);
\end{tikzpicture}
\caption{The anomalous theory on a filled 3-cobordism. Here we assumed $W$ is given by a single 2-handle, so the 3-manifold $H'$ is obtained from $H\underset{\Sigma}{\cup}M$ by a single surgery.}
    \label{fig:AaofM}
\end{figure}
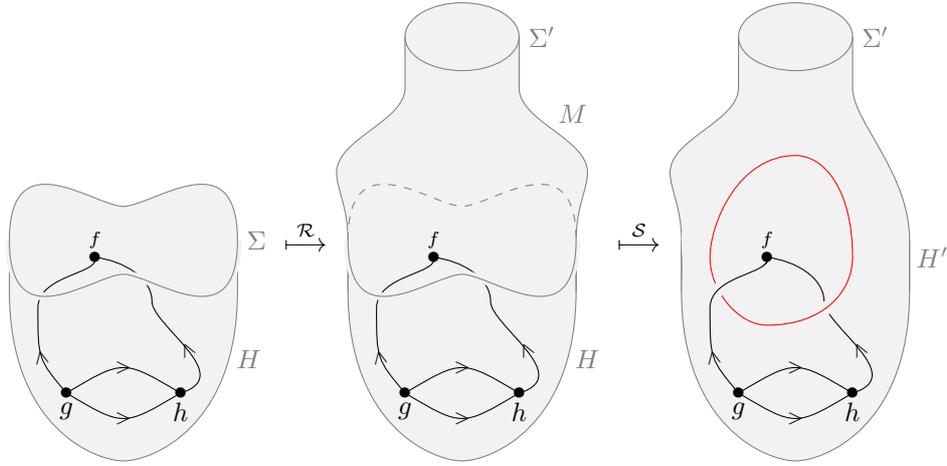

\subsection{Non-semisimple WRT at the boundary of Crane--Yetter}
We are now able to compare the anomalous theory $\Aa_\II$ described above and the Witten--Reshetikhin--Turaev and DGGPR TQFTs. We will consider the latter two as TQFTs in the usual sense, without decorations by ribbon graphs in the category of 3-cobordisms.
\begin{definition}
    Let $\AA$ be a semisimple modular tensor category and $\DD$ a chosen square root of its global dimension. The Witten--Reshetikhin--Turaev TQFT is the symmetric monoidal functor $$\WRT_\AA: \widetilde \cob_{2+1} \to \Vect$$ defined in \cite{TuraevBook, WalkerOnWitten}.
\end{definition}
\begin{definition}
    Let $\AA$ be a possibly non-semisimple modular tensor category, $\II = \operatorname{Proj}(\AA)$, $\mt$ a modified trace on $\II$ and $\DD$ a chosen square root of the global dimension\footnote{Called modularity parameter in \cite{DGGPR}} $\zeta$ (so $\DD^{-1}\mt$ is a modified trace with $\zeta = 1$). The DGGPR TQFT is the symmetric monoidal functor $$\DGGPR_\AA: \widetilde \cob_{2+1}^{nc} \to \Vect$$ defined in \cite{DGGPR}. It generalizes the WRT theories above. 
\end{definition}
% We have seen in Section \ref{Sec_ExtendedCY} that given such a modular category $\AA$, we have a scalar choice of ways to extend $\SkFun_\II$, with $\II = \operatorname{Proj}(\AA)$, into a once-extended theory $\SS_\II$, corresponding to a choice of modified trace. If a choice of modified trace and square root of the global dimension has been made as above, we define $\SS_\II$ to be the theory obtained by using the modified trace $\DD^{-1}\mt$. 
% We denote $$\Aa_\II: \cob_{2+1}^{filled, nc} \to \Omega \Bimod \simeq \Vect$$ the anomalous theory associated to $\SS_\II, \RR_\II$, which is non-compact when $\II\neq \AA$.
\begin{theorem}\label{thm:WRT}
    Let $\AA$ be a semisimple modular tensor category and $\II=\AA$, then 
    \begin{equation*}
        \begin{tikzcd}
            \cob_{2+1}^{filled} \ar[rr, "\Aa_\AA"] && \Vect\\
            & \widetilde \cob_{2+1} &
            \arrow[from = 1-1, to = 2-2, "{\pi}"]
            \arrow[from = 2-2, to = 1-3, "\WRT_\AA"', sloped]
        \end{tikzcd}
    \end{equation*}
    commutes up to symmetric monoidal natural isomorphism.   
\end{theorem}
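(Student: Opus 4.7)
The plan is to exhibit a symmetric monoidal natural isomorphism $\phi : \WRT_\AA \circ \pi \Rightarrow \Aa_\AA$ by reading off both sides explicitly in terms of skein modules and Kirby calculus, and matching normalizations.

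On objects, fix a filled surface $(\Sigma, H)$. Section \ref{ssec:DescrAnomalous} identifies $\Aa_\AA(\Sigma, H) \cong \Sk_\AA(H)$ via gluing of skeins. When $\AA$ is semisimple modular, the universal construction of \cite{BHMV95} recalled in \cite[Sec. IV.2]{TuraevBook} identifies $\WRT_\AA(\Sigma, L_H)$ with $\Sk_\AA(H)$ for any handlebody $H$ with kernel Lagrangian $L_H = \ker(i_* : H_1(\Sigma) \to H_1(H))$, by non-degeneracy of the skein pairing between opposite handlebodies, which follows from modularity. Composing gives the component $\phi_{(\Sigma, H)}$; independence of the skein-module side from any choice beyond $L_H$ is built into the definition of $\widetilde\cob_{2+1}$.

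For naturality on a filled cobordism $(M, W) : (\Sigma, H) \to (\Sigma', H')$ and a skein $T \in \Sk_\AA(H)$, Section \ref{ssec:DescrAnomalous} gives $\Aa_\AA(M, W)(T) = \SS_\AA(W)(i_* T)$. I would first reduce $W$ to a handle decomposition with only $0$-, $2$-, and $4$-handles, as in the proof of Proposition \ref{Prop_ZdepOnSignature}. The formulas $Z_0, Z_2, Z_4$ of Section \ref{Sec_ExtendedCY} then describe $\SS_\AA(W)(i_* T)$ as the skein obtained from $i_* T$ by adjoining a Kirby-colored link along the belt spheres of the $2$-handles (via the red-to-blue operation \eqref{eq:redToBlue}), multiplied by a scalar which, under the normalization $\zeta = 1$ and by Proposition \ref{Prop_ZdepOnSignature}, depends only on $\sigma(W)$. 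By the Kirby-calculus definition of $\WRT_\AA$ in \cite[Sec. IV.2]{TuraevBook}, $\WRT_\AA(M, \sigma(W))$ applied to the class of $(H, T)$ is computed by the same procedure. Equality follows from identifying the chromatic morphism with the classical Kirby color: in the semisimple case, $P_\unit = \unit$ and the normalization $\zeta = 1$ forces the scalar in $\chr = \lambda \bigoplus_i \operatorname{qdim}(V_i)\, \id_{V_i}$ to be $\lambda = \DD^{-1}$, so \eqref{eq:redToBlue} reproduces exactly the standard Kirby color $\DD^{-1} \sum_i \operatorname{qdim}(V_i)\, V_i$ used in \cite{TuraevBook}.

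Symmetric monoidality of $\phi$ is immediate from the factorization of $\Sk_\AA$, $\WRT_\AA$, and the boundary condition $\RR_\AA$ over disjoint unions. The main obstacle is the careful bookkeeping of normalizations: one must verify that the scalar contributions of the $0$- and $4$-handles of $W$ to $\SS_\AA(W)$ match Turaev's anomaly factor in $\WRT_\AA$, and that the Maslov-index correction built into composition in $\widetilde\cob_{2+1}$ matches the signature-additivity defect from Wall's non-additivity when composing fillings in $\cob_{2+1}^{filled}$. Both comparisons reduce to known identifications between surgery-theoretic and skein-theoretic descriptions of WRT in \cite{WalkerNotes, WalkerOnWitten}, together with the signature-only dependence established in Proposition \ref{Prop_ZdepOnSignature}, so the argument is fundamentally a calibration of conventions rather than a new topological input.
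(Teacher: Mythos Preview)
Your approach is broadly correct but differs substantively from the paper's. The paper proves Theorems \ref{thm:WRT} and \ref{thm:DGGPR} simultaneously by working with the \emph{universal-construction} description of $\DGGPR_\AA$ (which specializes to $\WRT_\AA$): the component $\eta_{\Sigma,H}:\Sk_\II(H)\to \DGGPR_\AA(\Sigma)$ is the tautological map $T\mapsto [H,T]$ into the quotient, shown to be an isomorphism by a dimension count via Lyubashenko's coend (Lemma \ref{Lem_DGGPRStateSpaces}). Naturality is then checked abstractly: one shows $[H',\SS_\II(W)(T)]=\SS_\II(\mathbb CP_2)^{\sigma(W)}[H\cup_\Sigma M,T]$ by pairing against an arbitrary closing manifold $N'$ and using that both sides compute the same closed invariant via $\SS_\II$ on a bounding $4$-manifold. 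No explicit Kirby-color matching is needed; the agreement of the red-to-blue operation with the DGGPR Kirby color is already packaged into Proposition \ref{Prop_ZdepOnSignature}. Your approach instead unpacks $\SS_\AA(W)$ handle by handle and compares directly with Turaev's surgery formula. This is a legitimate and more elementary route in the semisimple case, but it does not generalize to the non-semisimple setting the paper treats in parallel, and it trades a single conceptual argument for several normalization checks you acknowledge but do not carry out.

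There is one concrete gap. Objects of $\cob_{2+1}^{filled}$ are pairs $(\Sigma,H)$ with $H$ an \emph{arbitrary} bounding $3$-manifold, not just a handlebody, and your construction of $\phi_{(\Sigma,H)}$ only covers the handlebody case. The sentence ``independence of the skein-module side from any choice beyond $L_H$ is built into the definition of $\widetilde\cob_{2+1}$'' is about $\WRT_\AA\circ\pi$, not about $\Aa_\AA$: the target $\Sk_\AA(H)$ genuinely depends on $H$, and the fact that it is (non-canonically) isomorphic to $\Sk_\AA(H_0)$ for a handlebody $H_0$ with the same Lagrangian is exactly the invertibility of $\SS_\II$. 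The paper handles this in the last paragraph of Lemma \ref{Lem_DGGPRStateSpaces} by choosing a $4$-cobordism $W:H\Rightarrow H_0$ and using naturality of $\eta$ together with invertibility of $\SS_\II(W)$. You should add the analogous step: define $\phi_{(\Sigma,H)}$ for general $H$ by transporting along $\SS_\AA(W)$ for some $W:H\Rightarrow H_0$, and check independence of $W$ using Proposition \ref{Prop_ZdepOnSignature}.
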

\begin{theorem}\label{thm:DGGPR}
    Let $\AA$ be a non-semisimple modular tensor category and $\II = \operatorname{Proj}(\AA)$, then 
    \begin{equation*}
        \begin{tikzcd}
            \cob_{2+1}^{filled, nc} \ar[rr, "\Aa_\II"] && \Vect\\
            & \widetilde \cob_{2+1}^{nc} &
            \arrow[from = 1-1, to = 2-2, "{\pi}"]
            \arrow[from = 2-2, to = 1-3, "\DGGPR_\AA"', sloped]
        \end{tikzcd}
    \end{equation*}
    commutes up to symmetric monoidal natural isomorphism.   
\end{theorem}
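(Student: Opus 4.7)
The proof closely parallels that of Theorem \ref{thm:WRT}, with additional care needed for admissibility in the non-semisimple setting. The explicit description of $\Aa_\II$ in Section \ref{ssec:DescrAnomalous} identifies $\Aa_\II(\Sigma, H)$ with the admissible skein module $\Sk_\II(H)$, and sends a filled cobordism $(M, W)$ to the map $T \mapsto \SS_\II(W)(i_*T)$, where $i: H \hookrightarrow H \cup_\Sigma M$ is the canonical inclusion. The non-compactness hypothesis that every component of $M$ has incoming boundary is precisely what ensures $i_*T$ is admissible in $H \cup_\Sigma M$, so the construction is well-defined; similarly it ensures $\RR_\II(M)$ is defined.

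First I show that $\Aa_\II$ descends along $\pi$. At the level of objects, two handlebody fillings $H, H'$ of $\Sigma$ with the same kernel Lagrangian $L \subseteq H_1(\Sigma)$ differ by a sequence of 1- and 2-handle slides; invertibility of $\SS_\II$ (Proposition \ref{Prop_ZdepOnSignature}) together with the handle-attachment formulas of Section \ref{Sec_ExtendedCY} produce canonical isomorphisms $\Sk_\II(H) \simeq \Sk_\II(H')$ depending only on $L$. At the level of morphisms, Proposition \ref{Prop_ZdepOnSignature} gives directly that $\SS_\II(W)$ depends on $W$ only through $\sigma(W)$ (for fixed boundary 3-manifolds), so $\Aa_\II$ factors through $\pi$.

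Next I identify the resulting functor with $\DGGPR_\AA$. The DGGPR theory is built via the universal construction on admissible skeins; under modularity of $\AA$ with the chosen normalization $\zeta = 1$, the quotient in the universal construction is trivial, yielding a canonical isomorphism $\DGGPR_\AA(\Sigma, L) \simeq \Sk_\II(H)$ for any filling $H$ with kernel Lagrangian $L$. For morphisms, $\DGGPR_\AA(M, n)(T) \in \Sk_\II(H')$ is characterized by its pairing: against any $T' \in \Sk_\II(-H')$ representing a dual state, it must equal $\DGGPR_\AA(H \cup_\Sigma M \cup_{\Sigma'} (-H'), T \cup T', n)$ on the closed 3-manifold obtained by doubling. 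Applying the second part of Proposition \ref{Prop_ZdepOnSignature} to the closed bounding 4-manifold $W \cup_{H'} W'$, where $W'$ fills $-H'$, and using Wall's non-additivity to decompose its signature, produces exactly the matching identity $\SS_\II(W)(i_*T) = \DGGPR_\AA(M, n)(T)$ in $\Sk_\II(H')$. This gives the required natural isomorphism, and symmetric monoidality is automatic since disjoint unions are compatible with both the skein-theoretic construction (as $\SS_\II$ and $\RR_\II$ are symmetric monoidal) and with the universal construction of DGGPR.

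The main obstacle is the coherent bookkeeping of signatures and Maslov indices under composition. Under the projection $\pi$, composition in $\cob_{2+1}^{filled, nc}$ must map to composition in $\widetilde\cob_{2+1}^{nc}$, and this works precisely because Wall's non-additivity theorem identifies $\sigma(W_1 \cup_H W_2) = \sigma(W_1) + \sigma(W_2) + \mu(L_1, L_2, L_3)$ with the sum of signatures plus the Maslov index of the three Lagrangians involved, which is exactly the composition law defining $\widetilde\cob_{2+1}^{nc}$. A secondary subtlety is verifying that the admissibility conditions on 2-morphisms carry through correctly: the non-compactness hypothesis on $M$ is needed on both sides to ensure that the closed 3-manifolds appearing in the pairing argument admit admissible skeins, and that the universal construction is nondegenerate.
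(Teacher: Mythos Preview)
Your naturality argument (identifying $\Aa_\II(M,W)$ with $\DGGPR_\AA(M,\sigma(W))$ via pairing against test skeins $T'$ in a closed-up 3-manifold and invoking Proposition~\ref{Prop_ZdepOnSignature}) is essentially the paper's argument, and the signature/Maslov bookkeeping you sketch is the right ingredient.

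However, there is a genuine gap at the level of state spaces. You assert that ``the quotient in the universal construction is trivial, yielding a canonical isomorphism $\DGGPR_\AA(\Sigma,L)\simeq \Sk_\II(H)$'', but this is precisely the non-trivial step. Surjectivity of the canonical map $\Sk_\II(H)\to\DGGPR_\AA(\Sigma)$, $T\mapsto [H,T]$, is in \cite{DGGPR}; injectivity (i.e.\ that the universal-construction relations impose no new relations on admissible skeins in $H$) does \emph{not} follow formally from modularity or from the normalization $\zeta=1$. The paper isolates this as Lemma~\ref{Lem_DGGPRStateSpaces} and proves it by a dimension count: it identifies $\DGGPR_\AA(\Sigma)$ with the dual of $\Hom_\AA({\mathcal L}^{\otimes g},\unit)$ via the coend description of \cite{DGGPR}, rewrites this as a coend over projectives, and matches it with the coend description of $\Sk_\II(H_g)$ coming from Theorem~\ref{thm:SkCategTQFT}. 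Your proposal skips this entirely.

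A secondary imprecision: your ``descent at objects'' step claims that two handlebody fillings with the same Lagrangian ``differ by a sequence of 1- and 2-handle slides'', which is not correct (handle slides do not change the manifold), and in any case $H$ need not be a handlebody. The paper avoids this by not trying to descend $\Aa_\II$ first; it directly defines $\eta_{\Sigma,H}$ as the canonical map to the universal-construction quotient and then, in Lemma~\ref{Lem_DGGPRStateSpaces}, reduces the general-$H$ case to the handlebody case using invertibility of $\SS_\II$ and naturality of $\eta$ under a cobordism $W:M\Rightarrow H$.
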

\begin{proof}
Given $\emptyset \overset{H}{\to}\Sigma$, we need to give a natural isomorphism 
$$\eta_{\Sigma,H}:\Aa_\II(\Sigma) \tilde\to \DGGPR_\AA(\Sigma)\ .$$
On the one hand, we have $$\Aa_\II(\Sigma) := \int^{X \in \SkCat_\II(\Sigma)} \Sk_\II(\Sigma\times [0,1]; X,\emptyset) \otimes \Sk_\II(H;X) \overset{glue}{\simeq} \Sk_\II(H;\emptyset)$$
is the admissible skein module of $H$ with empty boundary points. 

On the other hand, the state spaces of $\DGGPR$ are defined via the universal construction of \cite{BHMV95}, i.e. as a quotient of the vector space generated by all 3-manifolds $N$ bounding $\Sigma$ equipped with an admissible $\II$-colored ribbon graph $T$. Let us denote $[N,T] \in \DGGPR(\Sigma)$ the induced vector. The quotient asks the relation $\sum_i [N_i, T_i]=0$ if for every $N', T_{N'}$ of boundary $-\Sigma$, the invariant of closed 3-manifold $\sum_i \DGGPR(N_i\underset{\Sigma}{\cup}N', T_i\sqcup T', n_i) = 0$, where $n_i$ is a Maslov index computed in the composition of $N'$ and $N_i$.

The map $\eta_{\Sigma,H}$ is the canonical map to the quotient. It is shown to be well-defined in \cite[Prop. 4.11]{DGGPR} (and is called $\pi$ there), and surjective when $H$ is connected. We defer to Lemma \ref{Lem_DGGPRStateSpaces} the proof that it is an isomorphism. 

The symmetric monoidal structure of $\DGGPR$ is given by taking disjoint union on these generators \cite[Prop. 4.8]{DGGPR}, hence $\eta_{\Sigma,H}$ is symmetric monoidal. 

We are left with the core of the proof: checking that $\eta_{\Sigma,H}$ is natural. Let $M: \Sigma\to \Sigma'$ be a 3-cobordism between filled surfaces equipped with a bounding 4-manifold $W : H\underset{\Sigma}{\cup}M \Rightarrow H'$. 

The action of $M,W$ on a vector $[H,T]\in \DGGPR(\Sigma)$ is given by $$\DGGPR(M,\sigma(W))([H,T]) = \SS_\II(\mathbb C P_2)^{\sigma(W)} [H\underset{\Sigma}{\cup}M,T]\ .$$
This is now a skein living in the gluing of $M$ and $H$, and not in $H'$ as we would hope. We would like to relate it to a element of the form $[H', T']$. This is asking: what is the skein $T' \subseteq H'$ so that the invariants $$\SS_\II(\mathbb C P_2)^{\sigma(W)+n-n'}\DGGPR(H\underset{\Sigma}{\cup}M\underset{\Sigma'}{\cup}N', T \cup T_{N'}) \overset ? = \DGGPR(H'\underset{\Sigma'}{\cup}N', T'\cup T_{N'})$$ match for any $N', T_{N'}$, where $n$ and $n'$ are Maslov indices.

By Proposition \ref{Prop_ZdepOnSignature}, each of these scalars is given by bounding the 3-manifold by a 4-manifold, and evaluation it under the TQFT $\SS_\II$. Let $W': H'\underset{\Sigma'}{\cup}N' \to \emptyset$ be a bounding 4-manifold. Then $W' \circ (W \circ_h \id_{N'})$ is a bounding manifold for $H\underset{\Sigma}{\cup}M\underset{\Sigma'}{\cup}N'$. By construction, the skein $\SS_\II(W)(T) \subseteq H'$ satisfies $\SS_\II(W')(\SS_\II(W)(T)\cup T_{N'}) = \SS_\II(W' \circ (W \circ_h \id_{N'}))(T \cup T_{N'})$, i.e. precisely $$\SS_\II(\mathbb C P_2)^{\sigma(W)}[H\underset{\Sigma}{\cup}M,T] = [H', \SS_\II(W)(T) ]$$ 
as the integer $\sigma(W) +n-n'$ computes the difference of signature between $W'$ and $W'\circ (W \circ_h \id_{N'})$. This concludes naturality.
\end{proof}

\begin{lemma}\label{Lem_DGGPRStateSpaces}
    The natural transformation $\eta$ 
    % $\eta_{\Sigma,H}$
    is a natural isomorphism.
\end{lemma}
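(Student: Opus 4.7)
The plan is to prove surjectivity and injectivity of $\eta_{\Sigma,H}$ at each filled surface $(\Sigma, H)$; naturality is already contained in the construction given in the proof of Theorem~\ref{thm:DGGPR}.

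For surjectivity, the connected case is \cite[Prop.~4.11]{DGGPR}, as observed just above the statement. For general $H$, I would decompose into connected components $H = \bigsqcup_j H_j$ with induced $\Sigma = \bigsqcup_j \Sigma_j$ and invoke symmetric monoidality of both $\Aa_\II$ and $\DGGPR_\AA \circ \pi$ to reduce to the connected case.

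For injectivity, I would unfold the BHMV universal construction. If $\eta_{\Sigma,H}(T) = [H,T,0] = 0$, then by definition
\begin{equation*}
\DGGPR_\AA(H \cup_\Sigma N',\, T \cup T',\, n') = 0
\end{equation*}
for every admissible triple $(N', T', n')$ with $\partial N' = -\Sigma$. By Proposition~\ref{Prop_ZdepOnSignature}, this closed-manifold invariant equals $\SS_\II(W)(T \cup T')$ for any 4-manifold $W$ bounding $H \cup_\Sigma N'$ with $\sigma(W) = n'$. Specializing to $N' = -H$ and $W = H \times [0,1]$ (with smoothed corners, signature zero, and boundary the double $D(H)$), the hypothesis becomes the vanishing of the bilinear pairing
\begin{equation*}
\langle T, T' \rangle_H := \SS_\II(H \times I)(T \cup T') = 0 \quad \text{for every admissible } T' \in \Sk_\II(-H;\emptyset).
\end{equation*}
Hence injectivity reduces to showing that this pairing is left-non-degenerate.

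The principal obstacle, and the real content of the lemma, is establishing this non-degeneracy. My strategy is to derive it from the invertibility of $\SS_\II$ (Proposition~\ref{Prop_ZdepOnSignature}). In $\Cob_{2+1+1}$ the 1-morphisms $H \colon \emptyset \to \Sigma$ and $-H \colon \Sigma \to \emptyset$ are dual, with the unit and counit of the duality realized by $H \times I$ equipped with appropriate side collars. The 2-functor $\SS_\II$ preserves duality, so $\SkFun_\II(-H)$ is dual to $\SkFun_\II(H)$ in $\Bimod^{hop}$, with the counit $\SS_\II(H \times I) \colon \SkFun_\II(-H) \circ \SkFun_\II(H) \Rightarrow \id_\Bbbk$ an \emph{invertible} 2-morphism, because $\SS_\II$ factors through the sub-bicategory of invertible objects and invertible 2-morphisms. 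This produces a perfect pairing of profunctors; pre-composition with $\RR_\II(\pm \Sigma)$ and the gluing equivalence \eqref{eq:BondM} then identifies the perfect pairing with $\langle -, - \rangle_H$ on empty-boundary skeins. The delicate step requiring real care will be verifying that the abstract duality pairing agrees with the concrete geometric pairing on the nose through the coend formulas describing composition in $\Bimod$, and, in the non-semisimple case, that admissibility conditions are preserved at every step of the coend manipulations.
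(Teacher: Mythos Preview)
Your reduction to the pairing $\langle T,T'\rangle_H = \SS_\II(H\times I)(T\cup T')$ is sound, and the paper's approach is indeed different: it does a direct dimension count for handlebodies using the coend description of $\DGGPR_\AA(\Sigma)$ from \cite[Prop.~4.17]{DGGPR}, then uses naturality together with invertibility of $\SS_\II$ only to pass from handlebodies to arbitrary bounding $H$.

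However, your abstract derivation of non-degeneracy from invertibility has a genuine gap, most visibly in the non-semisimple case. Invertibility of $\SS_\II$ does give that $\SkFun_\II(H)$ and $\SkFun_\II(\bar H)$ are mutually inverse 1-morphisms in $\Bimod$, and in particular that post-composition with $\SkFun_\II(H)$ induces an equivalence $\widehat{\SkCat_\II(\Sigma)}\simeq\Vect$. Under this equivalence one gets
\[
\Hom_{\widehat{\SkCat_\II(\Sigma)}}\big(\RR_\II(\Sigma),\,\SkFun_\II(\bar H)\big)\ \simeq\ \Hom_\Bbbk\big(\Sk_\II(H;\emptyset),\,\Sk_\II(D(H))\big)\ \simeq\ \Sk_\II(H;\emptyset)^*.
\]
But to conclude left-non-degeneracy of the geometric pairing you must show that the canonical map
\[
\Sk_\II(\bar H;\emptyset)\ \longrightarrow\ \Hom_{\widehat{\SkCat_\II(\Sigma)}}\big(\RR_\II(\Sigma),\,\SkFun_\II(\bar H)\big),\qquad T'\mapsto(-\cdot T'),
\]
is \emph{surjective}. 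When $\II=\AA$ this is immediate from Yoneda, since $\RR_\II(\Sigma)$ is represented by the empty labeling. When $\II\neq\AA$, the presheaf $\RR_\II(\Sigma)$ is precisely \emph{not} representable (the unit is not compact-projective), so this identification does not follow formally; it is an independent statement about admissible skeins that is essentially as hard as the lemma itself. Your phrase ``pre-composition with $\RR_\II(\pm\Sigma)$ identifies the perfect pairing'' hides exactly this step: pre-composition with a non-invertible 1-morphism need not preserve perfectness of a pairing of profunctors. The paper circumvents this entirely by computing $\dim\Sk_\II(H_g)$ and $\dim\DGGPR_\AA(\Sigma_g)$ explicitly and matching them.
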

\begin{proof}
    Let us first suppose that $\Sigma$ is connected and $H: \emptyset \to \Sigma$ is a handle body. It is shown in \cite[Prop. 4.11]{DGGPR} that $\eta_{\Sigma, H}$ is surjective. We will prove that it is an isomorphism by showing that $\Sk_\II(H)$ and $\DGGPR_\AA(\Sigma)$ have the same dimension.
    
    We will use the coend description of state spaces of \cite[Sec. 4.1]{DGGPR}. Remember that the coend ${\mathcal L}$ is defined as the colimit $${\mathcal L} = \int^{X\in\AA} X\otimes X^* = \big(\oplus_{X\in\AA} X\otimes X^*\big)/\langle (f,\id)\sim (\id, f^*), f:X\to Y\rangle$$
    We only consider projectives in our case, but by
    \cite[Proposition 5.1.7]{KerlerLyubBook} this does not change this colimit and ${\mathcal L} \simeq \int^{P\in\II} P\otimes P^*$. Note that by \cite[Corollary 5.1.8]{KerlerLyubBook}, the infinite nature of this colimit is unnecessary, and we could allow only $P=G$ the projective generator. We will still denote it $\int^{P\in\II}$, but it will be useful to remember that everything is finite.\\
    It is shown in \cite[Proposition 4.17 and Lemma 4.1 at $V=\unit$]{DGGPR} that 
    $$\DGGPR_\AA(\Sigma) \simeq \big(\Hom_\AA({\mathcal L}^{\otimes g},\unit)\big)^*$$
    where $g$ is the genus of $\Sigma$. Using the definition of the colimit, the vector space $\Hom_\AA({\mathcal L}^{\otimes g},\unit)$ is obtained as a limit: the subspace of the product $\Pi\Hom_\AA(P_1\otimes P_1^*\otimes\dots\otimes P_g\otimes P_g^*,\unit)$ of the collections that satisfy the $(f,\id)\sim (\id, f^*)$ relations. The dual of this limit is then (using the fact everything is finite) the colimit $$\DGGPR_\AA(\Sigma)\hskip-3pt \simeq\hskip-3pt \big( \underset{(P_i)_i \in \II^g}{\bigoplus}\hskip-8pt\Hom_\AA(P_1\otimes P_1^* \otimes \cdots \otimes P_g \otimes P_g^*, \unit)^*\big) /\langle (f,\id)\sim (\id, f^*), f:P_i\to P_i'\rangle$$
    
On the other hand, by Theorem \ref{thm:SkCategTQFT}, and writing the handlebody $H_g$ as a ball $B^3$ with $g$ pairs of disks glued together, the admissible skein module $\Sk_\II(H_g)$ is obtained as the coend $$\Sk_\II(H_g) \simeq \int^{P_1,\dots, P_g \in \II}\hskip-5pt \Sk_\II(B^3; P_1,P_1^*,\dots,P_g,P_g^*) \simeq \int^{P_1,\dots, P_g \in \II}\hskip-5pt\Hom_\AA(P_1\otimes P_1^* \otimes \cdots \otimes P_g \otimes P_g^*, \unit)\ .$$ 
% as $\Sk_\II(B^3;X) \simeq \Hom_\AA(X,\unit)$. 
A morphism $\varphi$ in the RHS maps to the skein \quad
    \begin{tikzpicture}[baseline = 0pt, scale = 0.7]
        \fill[gray!10] (-2,0) arc(180:540:2 and 1.5);
        \draw[gray] (-2,0) arc(180:540:2 and 1.5);
        \node at (0.3,-0.5) {$\dots$};
        \fill[white] (-1.1, 0.3) rectangle (1.1, 1.1);
        \draw (-1.1, 0.3) rectangle (1.1, 1.1);
        \node at (0,0.7) {$\varphi$};
        \draw (-1,0.3) .. controls (-1.7,-1.3) and (-0.5,-1.3).. (-0.8,0.3) node[pos = 0.1, left]{$P_1$} node[pos = 0.1, sloped]{\tiny $>$};
        \draw (-0.5,0.3) .. controls (-0.8,-1.4) and (0.2,-1.4).. (-0.1,0.3)  node[pos = 0.9, right]{$P_2$}  node[pos = 0.9, sloped]{\tiny $<$};
        \draw (1,0.3) .. controls (1.7,-1.3) and (0.5,-1.3).. (0.8,0.3) node[pos = 0.1, right]{$P_g$} node[pos = 0.1, sloped]{\tiny $>$};
        \draw[orange!70!black] (-1,-0.5)--++(0,-0.8) node[pos = 0.7, right, text = orange!70!black]{};
        \draw[orange!70!black] (-0.3,-0.5)--++(0,-1) node[pos = 0.7, right, text = orange!70!black]{};
        \draw[orange!70!black] (1,-0.5)--++(0,-0.8) node[pos = 0.7, left, text = orange!70!black]{};
        \node[circle, draw=gray, fill=white, inner sep = 2pt] at (-1,-0.5) {};
        \node[circle, draw=gray, fill=white, inner sep = 2pt] at (-0.3,-0.5) {};
        \node[circle, draw=gray, fill=white, inner sep = 2pt] at (1,-0.5) {};
    \end{tikzpicture}\ .
    
    This is almost the same as the formula for $\DGGPR_\AA(\Sigma)$ above, though there are duals. We have an isomorphism $\Hom_\AA(P_1\otimes P_1^* \otimes \cdots \otimes P_g \otimes P_g^*, \unit)^* \simeq \Hom_\AA(P_1\otimes P_1^* \otimes \cdots \otimes P_g \otimes P_g^*, \unit)$ given by the modified trace paring, and noticing that by design $P_1\otimes P_1^* \otimes \cdots \otimes P_g \otimes P_g^*$ is self-dual. These isomorphisms preserve the $(f,\id)\sim (\id, f^*)$ relations, and induce an isomorphism on the quotient. Hence $\Sk_\II(H)$ and $\DGGPR_\AA(\Sigma)$ have the same dimension, and $\eta_{\Sigma,H}$ is an isomorphism when $H$ is a handle body.

    If $H$ is a disjoint union of handle bodies, then $\eta_{\Sigma,H}$ is still an isomorphism by monoidality.

    Now, consider a general bounding 3-manifold $M:\emptyset \to \Sigma$. Denote $H:\emptyset \to \Sigma$ a disjoint union of handle bodies. Any two 3-manifold with same boundary are related by a 4-cobordisms $W: M \Rightarrow H$. It can be thought of as a morphism $(\id_{\Sigma}, W)$ in $\cob_{2+1}^{filled}$ where the 3-cobordism part is the identity. It induces a map $\SS_\II(W)\circ_h \id_{\RR(\Sigma)}: \Sk_\II(M,\emptyset)\to \Sk_\II(H, \emptyset)$ which is an isomorphism because $\SS_\II$ is invertible, and an isomorphism $\SS_\II(\mathbb CP_2)^{\sigma(W)}\id: \DGGPR_\AA(\Sigma)\to \DGGPR_\AA(\Sigma)$. Naturality of $\eta$ implies that $\eta_{\Sigma,M}$ and $\eta_{\Sigma,H}$ are related by these isomorphisms, hence $\eta_{\Sigma,M}$ is an isomorphism.
\end{proof}

\section{WRT as a projective TQFT}\label{Sec_projectiveTQFT}
As argued in \cite{FreedAnomaly, JacksonVanDykeProjectiveTQFT}, the physical content of the TQFT described above is really the boundary condition $\RR_\II$. Let us recall the main ideas at play here. 

The intuitive notion that seems to better represent the idea of projectivity of quantum physics would be a symmetric monoidal functor $\ZZ$ from $\cob_{n+1}$ to the 2-category $\Proj$ of \cite[Appendix A]{FreedAnomaly}. It has objects vector spaces, 1-morphisms linear maps and a unique 2-morphism from $f$ to $g$ if $f = \lambda g$ for some $\lambda\in \Bbbk^\times$. It is symmetric monoidal with tensor product of vector spaces. The presence of these 2-morphisms authorize the functor $\ZZ$ to only preserve composition up to scalar. 
Freed and Van Dyke propose however that a more appropriate target category is the following.
\begin{definition}
    The bicategory $\mathbb P\Vect$ is the sub-bicategory of the arrow category $(\Bimod^{\unit \to})^{hop}$ of \cite{JFS} where the target is invertible.

    More explicitly, it has objects pairs $f=(f^\#,f^t)$ where $f^t \in \Bimod^\times$ is an invertible object (under tensor product) and $f^\#:\unit \to f^t$ is a profunctor. It has 1-morphisms from $f$ to $g$ pairs $h= (h^\#,h^t)$ where $h^t:g^t \to f^t$ is an invertible profunctor, and $h^\#:  f^\# \Rightarrow h^t\circ g^\#$ is a linear natural transformation, so in a diagram:\begin{equation*}
        \begin{tikzcd}
            \unit \ar[r, "f^\#"] \ar[d, equal] & f^t \\
            \unit \ar[r, "g^\#"] & g^t \ar[u, "h^t"']
            \arrow[from = 1-1, to = 2-2, Rightarrow, "h^\#", shorten = 5pt]
        \end{tikzcd}
    \end{equation*} It has 2-morphisms from $h_1$ to $h_2$ natural isomorphisms $\eta: h_1^t \to h_2^t$ transporting $h_2^\#$ to $h_1^\#$. 
    
    It contains $\Vect \simeq (\Bimod^{\unit\to\unit})^{hop}$ as the sub-bicategory where the target part $(-)^t$ is trivial. We denote $\mathbb P:\Vect\to \mathbb P\Vect$ the induced functor, which we think of a quotient where we have identified the linear maps that differ by a scalar by invertible 2-morphisms.
    
    A \textbf{projective $(n+1)$-TQFT} is a symmetric monoidal 2-functor 
    $$\ZZ: \cob_{n+1}\to \mathbb P \Vect$$
    where we see $\cob_{n+1}$ as a 2-category with only identity 2-morphisms.
    
    The \textbf{anomaly} of a projective TQFT $\ZZ$ is the invertible categorified $(n+1)$-TQFT $$\alpha: \cob_{n+1}\overset{\ZZ}{\to} \mathbb P \Vect \overset{t}{\to} \Bimod^{hop}$$
    where $t:\Bimod^{\unit \to} \to \Bimod$ is the target functor. 

    Let $\widetilde\cob_{n+1}\overset{\pi}{\to} \cob_{n+1}$ be a symmetric monoidal functor. We say that $\widetilde\ZZ: \widetilde\cob_{n+1} \to \Vect$ is a \textbf{resolution of $\ZZ$} if \begin{equation*}
    \begin{tikzcd}
        \widetilde\cob_{n+1} \ar[d, "\pi"] \ar[r, "\widetilde\ZZ"] & \Vect \ar[d, "\mathbb P"] \\
        \cob_{n+1} \ar[r, "\ZZ"] & \mathbb P\Vect
    \end{tikzcd}
    \end{equation*}
commutes up to symmetric monoidal pseudo-natural isomorphism.

Note that by definition a projective TQFT $\ZZ$ with anomaly $\alpha$, i.e. a functor valued in the arrow category with prescribed source $\unit$ and target $\alpha$, is the same data as an oplax transformation $\Triv \Rightarrow \alpha$ to the categorified TQFT $\alpha$ contravariant on 1-morphisms from Definition \ref{def:BondaryCond}. Hence, a projective TQFT is a particular case of a boundary condition, specifically it is a boundary condition to an invertible theory $\alpha$.
\end{definition}
\begin{remark}
    There is an equivalence of symmetric monoidal bicategory $\Proj \to \mathbb P\Vect$ which maps a vector space $V$ to the pair $(V, \unit)$ where $V$ is seen as a profunctor $\unit \otimes \unit \to \Vect$, sends a linear map $f: V \to W$ to the pair $(f, \id_\unit)$ where $f$ is seen as a natural transformation $V \to W$, and sends a 2-morphism $\lambda:f \to \lambda f$ to the natural transformation $\lambda: \id_\unit \to \id_\unit$. 
    
    It is essentially surjective because any invertible object $f^t \in \Bimod^\times$ is isomorphic to the unit. It is locally essential surjective because any invertible vector space is isomorphic to $\Bbbk$. It is locally fully faithful because the 2-morphisms are only scalars on both sides.

    An inverse equivalence $\mathbb P\Vect \to \Proj^{op}$ amounts to a trivialization of the target part of $\mathbb P\Vect$, i.e. a trivialization of the anomaly. 
\end{remark}
\begin{remark}
    There should be a general definition of $\mathbb P n\hskip-2pt\Vect$ as a subcategory of the arrow category $(n+1)\hskip-2pt\Vect^{\unit \to}$ where the target is invertible, see \cite[Hypothesis P]{JacksonVanDykeProjectiveTQFT}. For $n\geq 3$, the canonical map $n\hskip-2pt\Vect \to \mathbb P n\hskip-2pt\Vect$ may no longer be essentially surjective, as not every invertible object of $n\hskip-2pt\Vect$ need be isomorphic to the unit. It is expected that Reshetikhin--Turaev theories that are not of Turaev--Viro type will land outside of the essential image of $3\hskip-2pt\Vect \to \mathbb P3\hskip-2pt\Vect$. This explains why, even though as a projective theory they are fully extended, they cannot be resolved all the way down to the point. 
\end{remark}
We observed in Theorems \ref{thm:WRT} and \ref{thm:DGGPR} that the WRT theory is obtained as a composition of a boundary condition and the invertible 4-TQFT on a bounding manifold. In the setting of projective TQFTs, we understand this second part as a trivialization of the anomaly of the projective theory given by the boundary condition. This can be reformulated as follows.
\begin{theorem}\label{thm:WRTprojective}
Let $\AA$ be a semisimple modular tensor category with a chosen square root of its global dimension. Then $\RR_\AA: \cob_{2+1} \to \mathbb P\Vect$ is a projective theory, and the WRT theory $\WRT_\AA: \widetilde\cob_{2+1}\to \Vect$ is a resolution of $\RR_\AA$. 

Let $\AA$ be a non-semisimple modular category with a chosen modified trace with global dimension equal to 1. Then $\RR_\II: \cob_{2+1}^{nc} \to \mathbb P\Vect$ is a non-compact projective theory, and the DGGPR theory $\DGGPR_\AA: \widetilde\cob_{2+1}^{nc}\to \Vect$ is a resolution of $\RR_\II$.
\end{theorem}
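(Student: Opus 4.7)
The plan is to treat the two assertions separately: first that $\RR_\II$ genuinely defines a projective TQFT, then that $\WRT_\AA$ (resp.\ $\DGGPR_\AA$) is a resolution. For the projectivity, I would observe that the boundary condition $\RR_\II : \Triv \Rightarrow \SkFun_\II$ of Theorem~\ref{thm:BondCond} repackages, via the usual correspondence between oplax natural transformations and arrow bicategories, into a symmetric monoidal 2-functor from $\Cob_{2+1+\varepsilon}^{hop}$ to $\Bimod^{\unit\to}$ (equivalently from $\Cob_{2+1+\varepsilon}$ to $(\Bimod^{\unit\to})^{hop}$), as noted in the paragraphs preceding the theorem. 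Restricting along the inclusion $\cob_{2+1} \hookrightarrow \Cob_{2+1+\varepsilon}$ produces a 2-functor to $(\Bimod^{\unit\to})^{hop}$. By Proposition~\ref{Prop_ZdepOnSignature}, when $\AA$ is modular with $\zeta = 1$, the once-extended theory $\SS_\II$ is invertible, so $\SkCat_\II(\Sigma)$ is an invertible object of $\Bimod$ and $\SkFun_\II(M)$ is an invertible profunctor. The target component of $\RR_\II$ is therefore always invertible, so $\RR_\II$ factors through the sub-bicategory $\mathbb P\Vect$. The non-compact case is identical with $\cob_{2+1}^{nc}$ replacing $\cob_{2+1}$.

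For the resolution claim, I would construct a symmetric monoidal pseudo-natural isomorphism $\eta : \mathbb P \circ \WRT_\AA \Rightarrow \RR_\AA \circ \pi$ (and analogously for $\DGGPR_\AA$ and $\RR_\II$). On an object $(\Sigma, L)$ of $\widetilde\cob_{2+1}$, choose a handlebody $H$ bounding $\Sigma$ with $\ker(H_1(\Sigma) \to H_1(H)) = L$, and take $\eta_{(\Sigma,L)}$ to be the 1-morphism in $\mathbb P\Vect$ whose target part is the invertible profunctor $\SkFun_\AA(H) : \SkCat_\AA(\Sigma) \to \unit$ and whose natural transformation part is the isomorphism $\WRT_\AA(\Sigma, L) \simeq \SkFun_\AA(H) \circ \RR_\AA(\Sigma)^\# = \Aa_\AA(\Sigma, H)$ supplied by Theorem~\ref{thm:WRT} (resp.\ Lemma~\ref{Lem_DGGPRStateSpaces} in the non-semisimple case). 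Any two such choices of $H$ are related by a 4-cobordism, and invertibility of $\SS_\II$ applied to this 4-cobordism produces a canonical 2-isomorphism between the resulting 1-morphisms in $\mathbb P\Vect$, so $\eta_{(\Sigma,L)}$ is well-defined up to unique 2-isomorphism.

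For a morphism $(M, n) : (\Sigma_1, L_1) \to (\Sigma_2, L_2)$, I would lift $n$ to a bounding 4-manifold $W : H_1 \cup_{\Sigma_1} M \Rightarrow H_2$ with $\sigma(W) = n$, and take the required naturality 2-morphism in $\mathbb P\Vect$ to be induced by $\SS_\II(W)$. Proposition~\ref{Prop_ZdepOnSignature} ensures that, under the normalization $\zeta = 1$, the morphism $\SS_\II(W)$ depends on $W$ only through $\sigma(W) = n$; that it intertwines $\mathbb P \WRT_\AA(M, n)$ with $\RR_\AA(M)$ is precisely the naturality step established in the proof of Theorem~\ref{thm:WRT} (resp.\ Theorem~\ref{thm:DGGPR}). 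The main subtlety will be verifying the composition axiom for $\eta$: composition in $\widetilde\cob_{2+1}$ introduces a Maslov index correction $\mu(L_1, L_2, L_3)$ by definition, and I would match this on the skein side by observing that gluing chosen bounding manifolds $W_1$ and $W_2$ yields a bounding 4-manifold of the composite with signature exactly $n_1 + n_2 + \mu$ via Wall's non-additivity theorem. Since $\SS_\II$ is 2-functorial and depends only on signature, the composition axiom closes on the nose. Symmetric monoidality of $\eta$ then follows directly from disjoint union compatibility of $\Aa_\II$, $\RR_\II$, and the classical theories, using disjoint unions of handlebody and 4-manifold choices.
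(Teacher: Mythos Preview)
Your proposal is correct and follows essentially the same approach as the paper: invoke Proposition~\ref{Prop_ZdepOnSignature} for invertibility of the target to land in $\mathbb P\Vect$, then build the pseudo-natural isomorphism by choosing a bounding $H$ for each $(\Sigma,L)$ (with $\delta^t=\SS_\II(H)$ and $\delta^\#$ the isomorphism of Theorem~\ref{thm:WRT}/\ref{thm:DGGPR}) and a bounding $W$ of prescribed signature for each $(M,n)$ (with $\delta_{M,n}=\SS_\II(W)$). You are in fact more thorough than the paper in spelling out the composition coherence via Wall non-additivity and the independence of choices, which the paper leaves implicit by citing naturality from Theorem~\ref{thm:DGGPR}.
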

\begin{proof}
First, $\RR_\II$ is a projective theory as its target is given by the truncation of the once-extended 4-TQFT $\SS_\II$  to $\cob_{2+1}$ which is indeed invertible when $\AA$ is modular by Proposition \ref{Prop_ZdepOnSignature}.

    We need to prove that \begin{equation*}
    \begin{tikzcd}
        \widetilde\cob_{2+1}^{nc} \ar[d, "\pi"] \ar[r, "\DGGPR_\AA"] & \Vect \ar[d, "\mathbb P"] \\
        \cob_{2+1}^{nc} \ar[r, "\RR_\II"] & \mathbb P\Vect
    \end{tikzcd}
    \end{equation*} commutes up to a symmetric monoidal pseudo-natural isomorphism $\delta: \mathbb P \circ \DGGPR_\AA \Rightarrow \RR_\II\circ\pi$. 
    
    For $(\Sigma,L)\in \widetilde\cob_{2+1}$, choose arbitrarily a 3-manifold $H$ bounding $\Sigma$ and inducing the Lagrangian $L$. It gives an isomorphism $\SS_\II(H): \RR_\II(\Sigma)^t =\SS_\II(\Sigma)\to \unit$, and we set $\delta_{\Sigma,L}^t := \SS_\II(H)$. By Theorem \ref{thm:DGGPR}, we have an isomorphism $\eta_{\Sigma,H}: \DGGPR_\AA(\Sigma,L) \to \Aa_\II(\Sigma,H) = \delta^t_{\Sigma, L}\circ \RR_\II(\Sigma)^\#$, and we set $\delta_{\Sigma,L}^\# := \eta_{\Sigma,H}$. In a diagram, we have\begin{equation*}
        \begin{tikzcd}
            \unit \ar[rr, "\DGGPR_\AA(\Sigma)"] \ar[d, equal] & & \unit \\
            \unit \ar[rr, "\RR_\II(\Sigma)"'] & & \SS_\II(\Sigma) \ar[u, "\SS_\II(H)"']
            \arrow[from = 1-1, to = 2-3, Rightarrow, "\eta_{\Sigma,H}", shorten = 5pt]
        \end{tikzcd}
    \end{equation*}    
    For $(M,\sigma): (\Sigma,L)\to(\Sigma',L')$ a 1-morphism, choose arbitrarily a 4-manifold $W: H\underset{\Sigma}{\cup}M \Rightarrow H'$ with signature $\sigma$ and set
    $$\delta_{M,\sigma} := \SS_\II(W): \delta_{\Sigma',H'}^t\circ \RR_\II(M)^t\Rightarrow \delta_{\Sigma,H}^t \ .$$ This does define a 2-morphism in the arrow category by the naturality of $\eta$ proven in Theorem \ref{thm:DGGPR}.
\end{proof}
\begin{remark}
    The topological content of a projective theory is not quite the same as that of a usual TQFT. Indeed, as every map is considered up to multiplication by a scalar, there is no underlying 3-manifold invariant to a projective 3-TQFT. Actually, in view of Theorem \ref{thm:WRTprojective}, we notice something even more surprising: in the definition of $\RR_\II$ we never used the Kirby color (it is only used in the definition of $\SS_\II$ on the 4-dimensional 2-handle) which is the core of the topological content of WRT theories. The projective theory is blind to this part of the topological content, which we only access because we have a good geometric way of trivializing the anomaly, namely choosing a bounding 4-manifold with prescribed signature. 

    However, if the projective theory does not know about the invariants of 3-manifolds, it does know about the projective representations of the mapping class groups of surfaces. 
\end{remark}

\small
\bibliography{mybib.bib}
\bibliographystyle{alpha}
\end{document}